\renewcommand{\bar}[1]{\overline{#1}}
\renewcommand{\tilde}[1]{\widetilde{#1}}
\newcommand{\sC}{\mathscr{C}}
\newcommand{\G}{\mathscr{G}}
\newcommand{\A}{\mathscr{A}}
\newcommand{\B}{\mathscr{B}}
\newcommand{\M}{\mathscr{M}}
\newcommand{\Z}{\mathscr{Z}}
\newcommand{\sP}{\mathscr{P}}
\newcommand{\g}{\mathfrak{g}}
\newcommand{\h}{\mathfrak{h}}
\newcommand{\fu}{\mathfrak{u}}
\newcommand{\C}{\mathbb{C}}
\newcommand{\R}{\mathbb{R}}
\newcommand{\bH}{\mathbf{H}}
\newcommand{\m}{\mathbf{m}}
\newcommand{\n}{\mathbf{n}}
\newtheorem{proposition}{Proposition}[section]
\newtheorem{lemma}[proposition]{Lemma}
\newtheorem{theorem}[proposition]{Theorem}
\newtheorem{corollary}[proposition]{Corollary}
\newtheorem{theoremintro}{Theorem}
\newtheorem*{theoremintro*}{Theorem}
\newtheorem*{corollaryintro*}{Corollary}
\numberwithin{equation}{section}
\DeclareMathOperator{\Ad}{Ad}
\DeclareMathOperator{\ad}{ad}
\DeclareMathOperator{\im}{im}
\DeclareMathOperator{\Aut}{Aut}
\DeclareMathOperator{\Lie}{Lie}
\DeclareMathOperator{\Hom}{Hom}
\title{The Orbit Type Stratification of the Moduli Space of Higgs Bundles}
\author{Yue Fan}
\date{\today}
\address{Yue Fan, Department of Mathematics, University of Maryland,
  College Park, MD, 20742, USA}
\email{\texttt{yuefan@umd.edu}}
\begin{document}
\begin{abstract}
  The moduli space of Higgs bundles can be constructed as a quotient
  of an infinite-dimensional space and hence admits an orbit type
  decomposition. In this paper, we show that the orbit type
  decomposition is a complex Whitney stratification such that each
  stratum is a complex symplectic submanifold and hence admits a
  complex Poisson bracket. Moreover, these Poisson brackets glue to a Poisson
  bracket on the structure sheaf of the moduli space so that the
  moduli space is a stratified complex symplectic space.
\end{abstract}
\maketitle
\tableofcontents

\section{Introduction}
Let $\sC$ be a hyperK\"ahler manifold, and $\G$ a Lie group acting on
$\sC$ and preserving the hyperK\"ahler structure. We also assume that
$\G$ admits a complexification $\G^\C$ such that the $\G$-action on
$\sC$ can be extended to a holomorphic $\G^\C$-action with respect to
some complex structure $I$ on $\sC$. Suppose there is a hyperK\"ahler
moment map $\m=(\mu,\mu_\C)$ such that $\mu$ is a moment map for the
$\G$-action with respect to the K\"ahler form induced by $I$, and
$\mu_\C$ is a complex moment map for the $\G^\C$-action with respect
to the complex symplectic form induced by the other complex structures
$J$ and $K$. Then, we may consider the hyperK\"ahler quotient
$\m^{-1}(0)/\G$. Although the quotient $\m^{-1}(0)/\G$, in general,
may be highly singular, Mayrand showed in \cite{mayrand2018local} that
if $\sC$ is finite-dimensional, and $\G$ is compact, then the
hyperK\"ahler quotient $\m^{-1}(0)/\G$ is a complex space and can be
decomposed into smooth hyperK\"ahler manifolds by orbit types such
that the decomposition is a complex Whitney stratification. The
hyperK\"ahler structure on each stratum comes from $\sC$ and is
compatible with the complex space structure. Moreover, the complex
symplectic structure on each stratum induces a complex Poisson bracket
such that these Poisson brackets glue to a complex Poisson bracket on
the structure sheaf of the complex space $\m^{-1}(0)/\G$. Finally, the
complex space $\m^{-1}(0)/\G$ is locally biholomorphic to an affine
complex symplectic quotient such that the biholomorphism is a Poisson
map and preserves the orbit type decompositions. Note that Mayrand's
result is a natural generalization of Sjamaar-Lerman
\cite{sjamaar1991stratified} from symplectic quotients to
hyperK\"ahler quotients.

The purpose of this paper is to show that Mayrand's results can be
extended to the moduli space of Higgs bundles, which is promised in
\cite{Fan2020}. More precisely, let $X$ be a closed Riemann surface
with genus $\geq2$. To parametrize Higgs bundles, we fix a smooth
Hermitian vector bundle $E$ over $X$, and let $\g_E$ be the bundle of
skew-Hermitian endomorphisms of $E$. For convenience, we may assume
that the degree of $E$ is 0. Then, by the Chern correspondence and the
fact that $\dim_\C X=1$, the space of holomorphic structures on $E$
can be identified with the space $\A$ of unitary connections on
$E$. Set $\sC=\A\times\Omega^{1,0}(\g_E^\C)$, and the configuration
space $\B$ of the Higgs bundles with underlying smooth bundle $E$ is
defined as
\begin{equation*}
  \B=\{(A,\Phi)\in\sC\colon\bar{\partial}_A\Phi=0\}
\end{equation*}
Note that the complex gauge group $\G^\C=\Aut(E)$ acts on $\B$ and
preserves the subspaces $\B^{ss}$ and $\B^{ps}$ consisting of
semistable and polystable Higgs bundles, respectively (see
\cite{wentworth2016higgs} for more details). The moduli space of Higgs
bundles is defined as the quotient $\M=\B^{ps}/\G^\C$ equipped with
the $C^\infty$-topology. To see how hyperK\"ahler geometry comes into
the picture, let us recall that the moduli space $\M$ can be realized
as a singular hyperK\"ahler quotient in the following way. Note that
$\sC$ is an infinite-dimensional affine hyperK\"ahler manifold modeled
on $\Omega^1(\g_E)\oplus\Omega^{1,0}(\g_E)$ (see
\cite[\S6]{Hitchin1987b}). The complex gauge group $\G^\C$ acts on
$\sC$ holomorphically with respect to the complex structure $I$ that
is given by the multiplication by $\sqrt{-1}$. (In this paper, we routinely
identify $\Omega^1(\g_E)$ with $\Omega^{0,1}(\g_E^\C)$.) The subgroup
$\G$ of $\G^\C$ consisting of unitary gauge transformations preserves
the hyperK\"ahler structure. The $\G$-action also admits a
hyperK\"ahler moment map as follows. Hitchin's equation
\begin{equation}
  \label{eq:HitchinEq}
  \mu(A,\Phi)=F_A+[\Phi,\Phi^*]
\end{equation}
can be regarded as a real moment map for the $\G$-action with respect
to the K\"ahler form induced by $I$. Moreover, the holomorphicity
condition $\mu_\C(A,\Phi)=\bar{\partial}_A\Phi$ can be regarded as a
complex moment map for the $\G^\C$-action with respect to the complex
symplectic form induced by the other two complex structures $J$ and
$K$. Then, the Hitchin-Kobayashi correspondence (see
\cite{Hitchin1987b} and \cite{Wilkin2006}) states that the inclusion
$\m^{-1}(0)\hookrightarrow\B^{ps}$ induces a homeomorphism
\begin{equation*}
  i\colon\m^{-1}(0)/\G\xrightarrow{\sim}\B^{ps}/\G^\C=\M
\end{equation*}
where $\m=(\mu,\mu_\C)$, and the quotient $\m^{-1}(0)/\G$ is equipped
with the $C^\infty$-topology. Therefore, we are now in Mayrand's
setting except that $\sC$, $\G$ and $\G^\C$ are
infinite-dimensional. We now define the orbit type decompositions of
$\m^{-1}(0)/\G$ and $\M$. Let $H$ be a $\G$-stabilizer at some Higgs
bundle in $\m^{-1}(0)$ and $(H)$ the conjugacy class of $H$ in
$\G$. Consider the subspace
\begin{equation*}
  \m^{-1}(0)_{(H)}=\{(A,\Phi)\in\m^{-1}(0)\colon\G_{(A,\Phi)}\in(H)\}
\end{equation*}
It is $\G$-invariant, and the orbit type decomposition of the singular
hyperK\"ahler quotient $\m^{-1}(0)/\G$ is defined as
\begin{equation*}
  \m^{-1}(0)/\G=\coprod_{(H)}\text{ components of }\m^{-1}(0)_{(H)}/\G
\end{equation*}
By abusing the notation, we generally use $\pi$ to denote the quotient
map $\B^{ps}\to\M$ or $\m^{-1}(0)\to\m^{-1}(0)/\G$. Then, we will
prove the following. It is a slight generalization of Hitchin's
construction of the moduli space of stable Higgs bundles in \cite[\S5
and \S6]{Hitchin1987b} (cf. \cite[Proposition
2.21]{stecker2015moduli}). 
\begin{theoremintro}\label{sec:introduction-pieces-hyperkahler}
  Every stratum $Q$ in the orbit type decomposition of the hyperK\"ahler
  quotient $\m^{-1}(0)/\G$ is a locally closed smooth manifold, and
  $\pi^{-1}(Q)$ is a smooth submanifold of $\sC$ such that the
  restriction $\pi\colon\pi^{-1}(Q)\to Q$ is a smooth
  submersion. Moreover, the restriction of the hyperK\"ahler structure
  from $\sC$ to $\pi^{-1}(Q)$ descends to $Q$.
\end{theoremintro}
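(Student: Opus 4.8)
The plan is to transplant the local picture of Sjamaar--Lerman \cite{sjamaar1991stratified} and Mayrand \cite{mayrand2018local} to the present infinite-dimensional situation, using ellipticity of the Higgs-bundle deformation complex to keep every relevant operator Fredholm; when $H$ is the centre this recovers Hitchin's construction of the moduli of stable Higgs bundles, and the new content is to treat arbitrary polystable orbit types. Following Hitchin, I would work throughout with the Sobolev $L^2_k$-completions of $\sC$, $\G$ and $\G^\C$ for $k$ large, so that $\sC$ becomes an affine Hilbert manifold with a smooth affine $\G^\C$-action, $\mu$ and $\mu_\C$ are smooth, and the entire hyperK\"ahler structure is $\G$-invariant; every statement proved for these completions is then upgraded to the $C^\infty$-category, and shown to be independent of $k$, by elliptic regularity for the equation $\m=0$ and for the gauge-fixing conditions below. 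Since $Q$ is a connected component of $\m^{-1}(0)_{(H)}/\G$ and $\pi^{-1}(Q)$ the corresponding union of components of $\m^{-1}(0)_{(H)}$, it suffices to prove that $\m^{-1}(0)_{(H)}$ is a locally closed smooth submanifold of $\sC$, that $\pi$ restricts on it to a locally trivial bundle with fibre $\G/H$ over the smooth manifold $\m^{-1}(0)_{(H)}/\G$ --- in particular to a surjective submersion --- and that the ambient hyperK\"ahler tensors restrict to a $\G$-basic structure on $\pi^{-1}(Q)$.

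I would begin by pinning down the stabilizers. For a polystable Higgs bundle $c=(A,\Phi)\in\m^{-1}(0)$, the $\G^\C$-stabilizer $\Gamma^\C:=\Aut(E,\bar{\partial}_A,\Phi)$ is the kernel of an elliptic operator on the closed surface $X$, hence a finite-dimensional complex Lie group; its decomposition along the polystable splitting of $(E,\bar{\partial}_A,\Phi)$ into stable Higgs subbundles of equal slope exhibits it as a product of general linear groups, so it is reductive, with maximal compact subgroup $H=\G_c$ (which always contains the constant central scalars, acting trivially on $\sC$, so below one tacitly works modulo this harmless kernel). Because $\G^\C$ acts on the affine space $\sC$ by affine transformations holomorphic for $I$, and $H\subset\G$ preserves the whole hyperK\"ahler structure, the fixed-point set $\sC^H$ is a closed complex-affine subspace of $(\sC,I)$ and a flat hyperK\"ahler submanifold; the normalizer $N_\G(H)$ acts on $\sC^H$ with $H$ acting trivially, so $L:=N_\G(H)/H$ acts on $\sC^H$ preserving its hyperK\"ahler structure, with a residual hyperK\"ahler moment map $\m^H$ induced from $\m|_{\sC^H}$.

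The heart of the argument is the slice step. At $c\in\m^{-1}(0)$ with $\G_c=H$, the infinitesimal gauge action $\rho_c\colon\Omega^0(\g_E)\to T_c\sC$ is part of the elliptic deformation complex and hence has closed range, so $S_c=c+\ker\rho_c^\ast$ is an $H$-invariant affine slice and the implicit function theorem produces an $H$-equivariant homeomorphism of $\G\times_H U$ onto a neighbourhood of $\G c$ for a small $H$-invariant $U\subset S_c$; this is exactly Hitchin's Coulomb/Kuranishi gauge-fixing. Standard slice properties then give that $\sC^H_{\mathrm{reg}}:=\{x\in\sC^H\colon\G_x=H\}$ is open in $\sC^H$, that $\m^{-1}(0)_{(H)}$ is locally closed, and that $\m^{-1}(0)_{(H)}=\G\cdot\big(\m^{-1}(0)\cap\sC^H_{\mathrm{reg}}\big)$, locally modelled near $c$ on the associated space $\G\times_{N_\G(H)}\big(\m^{-1}(0)\cap\sC^H_{\mathrm{reg}}\big)$. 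It remains to establish the infinite-dimensional analogue of the key lemma of Sjamaar--Lerman: that $\m^{-1}(0)\cap\sC^H$ is the zero set of the residual moment map $\m^H$, and that near its points of stabilizer exactly $H$ it is a smooth submanifold of $\sC^H$ on which $L$ acts freely --- the cokernel of $d\m^H_c$ being $(\Lie L_c)^\ast$, which vanishes there, while ellipticity of the deformation complex makes this cokernel finite-dimensional and the range closed. It follows that $\m^{-1}(0)\cap\sC^H_{\mathrm{reg}}$ is a principal $L$-bundle over the smooth manifold $(\m^{-1}(0)\cap\sC^H_{\mathrm{reg}})/L\cong\m^{-1}(0)_{(H)}/\G$, which is finite-dimensional because the deformation complex is elliptic, so $Q$ is a smooth manifold; saturating by $\G$, $\m^{-1}(0)_{(H)}$ is a locally closed smooth submanifold of $\sC$ and $\pi$ restricts on it to a locally trivial bundle with fibre $\G/H$, hence to a surjective submersion, which over $Q$ is $\pi\colon\pi^{-1}(Q)\to Q$.

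Finally, for the hyperK\"ahler structure, let $\mathcal{H}\subset T\pi^{-1}(Q)$ be the $g$-orthogonal complement of the tangent distribution to the $\G$-orbits. From the slice description $T_c\pi^{-1}(Q)=T_c(\G c)+\ker d\m^H_c$ one finds that at $c\in\m^{-1}(0)\cap\sC^H_{\mathrm{reg}}$ the subspace $\mathcal{H}_c$ is the $g$-orthogonal complement, inside $\ker d\m^H_c\subset T_c\sC^H$, of the tangent to the $L$-orbit; the standard hyperK\"ahler-reduction identity --- $\ker d\m^H_c$ is the common symplectic orthogonal of the $L$-orbit for the three K\"ahler forms $\omega_I$, $\omega_J$, $\omega_K$, so quotienting out the orbit direction leaves a subspace preserved by $I$, $J$ and $K$ --- then shows $\mathcal{H}_c$ is $I$-, $J$- and $K$-invariant. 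The ambient metric restricts to $\pi^{-1}(Q)$ with the $\G$-orbit directions orthogonal to $\mathcal{H}$, the three K\"ahler forms restrict with the $\G$-orbit directions in their kernels, and $I,J,K$ restrict to $\mathcal{H}$; this $\G$-basic datum descends along $\pi$, and the descended metric, complex structures and K\"ahler/holomorphic-symplectic forms inherit the quaternionic relations and closedness by the usual reduction argument, so $Q$ acquires the hyperK\"ahler structure induced from $\sC$. The main obstacle is the slice step for the noncompact, infinite-dimensional gauge group: making Hitchin's gauge-fixing rigorous near orbits of polystable Higgs bundles, supplying the elliptic estimates needed to descend everything to $C^\infty$, and verifying the Sjamaar--Lerman submersion lemma in this analytic setting, all while keeping the charts compatible with the component-wise definition of the orbit type decomposition of $\m^{-1}(0)/\G$.
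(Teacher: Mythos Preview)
Your proposal is broadly correct and reaches the same conclusion, but the organization differs from the paper's in a way worth noting. You follow the Sjamaar--Lerman template literally: work with the global fixed-point set $\sC^H$, introduce the residual action of $L=N_\G(H)/H$, and invoke the moment-map lemma that identifies the cokernel of $d\m^H_c$ with the dual of the stabilizer in $L$. The paper instead stays entirely local: it takes the Coulomb slice $S=(A,\Phi)+\ker d_1^*$, restricts to the $H$-fixed locus $S^H$ inside the slice, and never introduces the normalizer or the group $L$ at all. The submanifold property of $\m^{-1}(0)\cap S^H$ is established in two steps: first each component $\mu_i|_{S^H}$ is shown directly to have constant finite corank (using the elliptic Hodge decomposition of $\Omega^2(\g_E)^H$ and the averaging trick over the compact $H$), so each $\mu_i^{-1}(0)\cap S^H$ is a submanifold; then the triple intersection is obtained by checking that the diagonal $\Delta\colon S^H\to(S^H)^3$ is transverse to the product $\prod_i\mu_i^{-1}(0)\cap S^H$, which is where the mutual orthogonality of the three subspaces $I_i\,\im d_1$ enters. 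Your one-line appeal to ``$\mathrm{coker}\,d\m^H_c=(\Lie L_c)^*$'' hides exactly this transversality step: for a hyperK\"ahler moment map the cokernel is a priori three copies of the stabilizer dual, and collapsing them requires the orthogonality argument, so you would need to spell this out.

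What each approach buys: your route is conceptually cleaner and makes the parallel with \cite{sjamaar1991stratified,mayrand2018local} transparent, but it obliges you to check that $N_\G(H)$ is a Banach Lie subgroup of the completed gauge group with $L$ acting smoothly on $\sC^H$ and admitting a genuine moment map --- not hard, since $H$ is compact and finite-dimensional, but an extra analytic verification in the infinite-dimensional setting. The paper's slice-local argument avoids the normalizer entirely and reduces everything to explicit Fredholm statements about $d_1$, $d\mu_i$ and their $H$-invariant parts, at the cost of the somewhat ad hoc diagonal-transversality computation. The hyperK\"ahler descent argument is the same in both.
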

Similarly, if $L$ is a $\G^\C$-stabilizer at some Higgs bundle in
$\B^{ps}$, and $(L)$ denotes the conjugacy class of $L$ in $\G^\C$,
then we consider the subspace
\begin{equation*}
  \B^{ps}_{(L)}=\{(A,\Phi)\in\B^{ps}\colon(\G^\C)_{(A,\Phi)}\in(L)\}
\end{equation*}
It is $\G^\C$-invariant, and the orbit type decomposition of the
moduli space $\M$ is defined as
\begin{equation*}
  \M=\coprod_{(L)}\text{ components of }\B_{(L)}^{ps}/\G^\C
\end{equation*}
In \cite{Fan2020}, it is shown that $\M$ is a normal complex
space. Then, we will prove the following.
\begin{theoremintro}\label{sec:introduction-pieces-moduli}
  Every stratum $Q$ in the orbit type decomposition of the moduli space
  $\M$ is a locally closed complex submanifold of $\M$, and
  $\pi^{-1}(Q)$ is a complex submanifold of $\sC$ with respect to the
  complex structure $I$ such that the restriction
  $\pi\colon\pi^{-1}(Q)\to Q$ is a holomorphic submersion. This
  decomposition is a complex Whitney stratification.
\end{theoremintro}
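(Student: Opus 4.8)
The plan is to deduce the theorem from Theorem~\ref{sec:introduction-pieces-hyperkahler} together with the Hitchin--Kobayashi correspondence and the local structure of $\M$ from \cite{Fan2020}, reducing the holomorphic and Whitney assertions to Mayrand's finite-dimensional theorem applied to a Kuranishi-type local model. The first step is to match up the two orbit type decompositions. If $(A,\Phi)\in\m^{-1}(0)\subset\B^{ps}$, then the complex gauge stabilizer $(\G^\C)_{(A,\Phi)}$ is complex reductive and the unitary gauge stabilizer $\G_{(A,\Phi)}$ is a maximal compact subgroup of it --- this is the standard fact that a polystable Higgs bundle carrying a solution of Hitchin's equation has reductive automorphism group, with the metric-preserving automorphisms forming a maximal compact subgroup. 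Consequently, if $H$ is a $\G$-stabilizer and $L$ the corresponding $\G^\C$-stabilizer, so that $H$ is maximal compact in $L$, then the homeomorphism $i\colon\m^{-1}(0)/\G\to\M$ carries $\m^{-1}(0)_{(H)}/\G$ onto $\B^{ps}_{(L)}/\G^\C$; hence $i$ identifies the orbit type decomposition of $\m^{-1}(0)/\G$ with that of $\M$, and Theorem~\ref{sec:introduction-pieces-hyperkahler} already shows that each stratum $Q$ of $\M$ is homeomorphic to a locally closed smooth manifold. It remains to upgrade this to the holomorphic and Whitney statements.

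For that I would pass to local models. By \cite{Fan2020}, near a polystable $(A,\Phi)$ with stabilizer $L$ there is a finite-dimensional flat hyperK\"ahler vector space $V$ --- the first hypercohomology of the deformation complex --- carrying a linear $L$-action that preserves the complex symplectic form, with a complex moment map $\Psi$ for the $L$-action given by the (quadratic) Kuranishi obstruction, together with a biholomorphism from a neighbourhood of $\pi(A,\Phi)$ in $\M$ onto a neighbourhood of the origin in the complex symplectic quotient $\Psi^{-1}(0)/\!\!/L$ that intertwines $\pi$ with the quotient map and preserves stabilizers, hence orbit types. This places us exactly in Mayrand's setting, with the compact group being a maximal compact subgroup of $L$, so his theorem shows the orbit type decomposition of $\Psi^{-1}(0)/\!\!/L$ is a complex Whitney stratification whose strata are locally closed complex submanifolds. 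Since being a locally closed complex submanifold of a complex space, the frontier condition, local finiteness, and the Whitney conditions are all local in nature (using that the strata are connected), these properties transport through the biholomorphisms: every stratum $Q$ of $\M$ is a locally closed complex submanifold, and the decomposition is a complex Whitney stratification.

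For the statements about $\pi^{-1}(Q)$, the naive complexification $\G^\C\times_\G i^{-1}(Q)$ of the corresponding hyperK\"ahler stratum fails to be an immersion into $\sC$ as soon as stabilizers are nontrivial, so I would instead invoke the slice theorem for the $\G^\C$-action on $\B^{ps}$ near a polystable point: a $\G^\C$-invariant neighbourhood $W$ of $(A,\Phi)$ is $\G^\C$-equivariantly biholomorphic to $\G^\C\times_L Y$, where $Y\subset\sC$ is an $L$-invariant Kuranishi slice identified holomorphically with an open subset of $\Psi^{-1}(0)$, and under this identification $\pi^{-1}(Q)\cap W$ corresponds to $\G^\C\times_L Y_{(L')}$, with $Y_{(L')}$ the orbit type locus inside the slice. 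By the previous step applied within the slice, $Y_{(L')}$ is an $L$-invariant complex submanifold of $Y$, so $\G^\C\times_L Y_{(L')}$ is a complex submanifold of $\G^\C\times_L Y\cong W$ with respect to $I$; patching over such $W$ shows $\pi^{-1}(Q)$ is a complex submanifold of $(\sC,I)$, and the composite $\G^\C\times_L Y_{(L')}\to Y_{(L')}/L\cong Q\cap\pi(W)$ exhibits $\pi\colon\pi^{-1}(Q)\to Q$ as a holomorphic submersion. The main obstacle is the infinite-dimensional bookkeeping required to legitimize this reduction: establishing the $\G^\C$-slice theorem on $\B^{ps}$ with all the needed compatibilities --- with $I$, with $\pi$, and with orbit types --- and verifying that the local model of \cite{Fan2020} genuinely agrees with Mayrand's finite-dimensional complex symplectic quotient, in particular that the Kuranishi obstruction is the complex moment map on $V$. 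Once these structural inputs are in place, the conclusions follow formally by localization together with Mayrand's theorem.
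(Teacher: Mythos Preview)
Your overall strategy---reduce to Mayrand's finite-dimensional theorem via a $\G^\C$-slice theorem and the Kuranishi local model---is the paper's approach, and your outline of the argument for $\pi^{-1}(Q)$ and $\pi|_{\pi^{-1}(Q)}$ matches the paper closely. However, there is a genuine gap in your first paragraph that recurs implicitly in the Whitney step.

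You claim that $i$ carries $\m^{-1}(0)_{(H)}/\G$ onto $\B^{ps}_{(L)}/\G^\C$ because $H$ is maximal compact in $L=H^\C$. But surjectivity requires the following: given $(B,\Psi)\in\B^{ps}$ with $\G^\C$-stabilizer conjugate to $H^\C$ in $\G^\C$, after moving into $\m^{-1}(0)$ via some $g\in\G^\C$ the $\G$-stabilizer $H'$ of $(B,\Psi)g$ satisfies $(H')^\C$ conjugate to $H^\C$ in $\G^\C$, and you need $H'$ conjugate to $H$ \emph{in $\G$}. The maximal-compact argument only gives an abstract isomorphism $H'\cong H$, or at best conjugacy inside $\G^\C$; it does not yield conjugacy in $\G$. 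In finite dimensions this would follow from Mostow's decomposition of the ambient complex reductive group, but here $\G^\C$ is infinite-dimensional and no such result is available off the shelf. The paper establishes precisely this implication (Corollary~\ref{sec:most-decomp-1-conjugate}) by first proving a Mostow decomposition for $\G^\C$ (Theorem~\ref{sec:introduction-mostow}), which occupies an entire section and is the principal new technical input.

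The same issue resurfaces in the Whitney step. When you say the Kuranishi biholomorphism ``preserves stabilizers, hence orbit types,'' preservation of stabilizers as subgroups gives matching of $H^\C$-conjugacy classes on the model side, whereas the strata of $\M$ are indexed by $\G^\C$-conjugacy classes. That the two partitions agree after refinement into connected components (the paper's Lemma~\ref{sec:whitney-conditions-HC=GC-orbit-types}) again needs Mostow's decomposition for $\G^\C$. Your closing remark that the obstacle is ``infinite-dimensional bookkeeping'' understates this: the specific missing ingredient is a decomposition $H^\C\times_H(\h^\perp\times\G)\xrightarrow{\sim}\G^\C$, and obtaining it requires equipping $\G^\C$ with a weak K\"ahler structure for which the left $H$-action is Hamiltonian and then running a Kempf--Ness-type argument.

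A minor structural note: the paper proves the submanifold and submersion assertions directly from the $\G^\C$-slice theorem and the Kuranishi map, without first invoking Theorem~\ref{sec:introduction-pieces-hyperkahler} or the matching of the two orbit type decompositions; your detour through Theorem~\ref{sec:introduction-pieces-hyperkahler} is unnecessary for that part and is what forces you into the Mostow gap prematurely.
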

Here, by complex Whitney stratification, we mean that the orbit type
decomposition of $\M$ is a disjoint union of locally closed complex
submanifolds such that if $Q_1\cap\bar{Q_2}\neq\emptyset$ then
$Q_1\subset\bar{Q_2}$ for any strata $Q_1$ and $Q_2$ in the
decomposition. This is called the \textit{frontier
  condition}. Moreover, this decomposition is required to satisfy
Whitney conditions $A$ and $B$. Although Whitney conditions $A$ and
$B$ are conditions for submanifolds in an Euclidean space, they make
sense for complex spaces, since they are local conditions and
invariant under diffeomorphisms (see \cite[Definition 2.2, 2.5,
2.7]{mayrand2018local} for more details).

Moreover, the Hitchin-Kobayashi correspondence $i$ preserves the orbit
type decompositions in the following way.
\begin{theoremintro}\label{sec:introduction-HK-preserves-orbit-types}
  If $Q$ is a stratum in the orbit type decomposition of
  $\m^{-1}(0)/\G$, then $i(Q)$ is a stratum in the orbit type
  decomposition of $\M$, and the restriction $i\colon Q\to i(Q)$ is a
  biholomorphism with respect to the complex structure $I_Q$ on $Q$
  coming from $\sC$ and the natural complex structure on $i(Q)$.
\end{theoremintro}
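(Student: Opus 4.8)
The plan is to compare the $\G$-stabilizers on $\m^{-1}(0)$ with the $\G^\C$-stabilizers on $\B^{ps}$, use this to see that $i$ matches up orbit type pieces, and then verify holomorphy at the level of tangent spaces. First I would establish that for $(A,\Phi)\in\m^{-1}(0)$ — so that the fixed metric $h_0$ is the harmonic metric of the Higgs bundle $(E,\bar{\partial}_A,\Phi)$ — the complex stabilizer $(\G^\C)_{(A,\Phi)}=\Aut(E,\bar{\partial}_A,\Phi)$ is a complex reductive Lie group, and the unitary stabilizer $\G_{(A,\Phi)}$ is a maximal compact subgroup of it, so that $(\G^\C)_{(A,\Phi)}=(\G_{(A,\Phi)})^\C$. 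The reductivity of the automorphism group of a polystable Higgs bundle is classical (cf. \cite{Hitchin1987b}) and is used in \cite{Fan2020}; that the unitary automorphisms form a maximal compact follows from $h_0$ being harmonic. Combining this with the surjectivity of the Hitchin–Kobayashi correspondence, every $\G^\C$-stabilizer occurring on $\B^{ps}$ is, up to conjugacy, the complexification of a $\G$-stabilizer occurring on $\m^{-1}(0)$.

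\textbf{$i$ respects the stratifications.} Next I would prove that two compact subgroups $K_1,K_2$ of $\G$ that are conjugate in $\G^\C$ are already conjugate in $\G$ (the converse being obvious). The orbit map $g\mapsto g\cdot h_0$ identifies the space of Hermitian metrics on $E$ with $\G^\C/\G$; this is a nonpositively curved symmetric space on which $\G^\C$ acts by isometries, and $h_0$ is fixed exactly by $\G$. Given $gK_1g^{-1}=K_2$, I would use the polar decomposition $g=\exp(\xi)u$ with $u\in\G$ and $\xi$ an $h_0$-self-adjoint endomorphism of $E$; replacing $K_1$ by $uK_1u^{-1}$ (same $\G$-conjugacy class) reduces to $g=\exp(\xi)$. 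Then $K_2=\exp(\xi)K_1\exp(-\xi)$ fixes $h_0$ and $\exp(\xi)\cdot h_0$, hence the unique geodesic joining them, so $\Ad(K_2)\xi=\xi$, i.e.\ $\exp(\xi)$ commutes with $K_2$; conjugating the identity by $\exp(-\xi)$ gives $K_1=K_2$. With the previous step this yields $\m^{-1}(0)_{(H)}=\m^{-1}(0)\cap\B^{ps}_{(H^\C)}$, so $i$ restricts to a homeomorphism $\m^{-1}(0)_{(H)}/\G\xrightarrow{\sim}\B^{ps}_{(H^\C)}/\G^\C$, with $(H)\mapsto(H^\C)$ a bijection between the orbit type labels occurring on each side. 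As $i$ is a homeomorphism it carries connected components to connected components, so $i(Q)$ is a stratum of $\M$.

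\textbf{Biholomorphy.} Write $Q'=i(Q)$, $P=\pi^{-1}(Q)\subseteq\m^{-1}(0)$, $N=\pi^{-1}(Q')\subseteq\B^{ps}$; I would first record that $N=\G^\C\cdot P$, that $P=N\cap\m^{-1}(0)$, and that the two orbit projections $\pi^P\colon P\to Q$ and $\pi^N\colon N\to Q'$ satisfy $\pi^N|_P=i\circ\pi^P$. By Theorem~\ref{sec:introduction-pieces-moduli}, $N$ is a complex submanifold of $(\sC,I)$ and $\pi^N$ is a holomorphic submersion, so the complex structure of $Q'$ is the one induced from $I$ through $\pi^N$. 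By the proof of Theorem~\ref{sec:introduction-pieces-hyperkahler}, at a point $p\in P$ over $q$ the complex structure $I_Q$ on $Q$ is induced from $I$ through $\pi^P$ via an $I$-invariant subspace $W_p\subseteq T_pP$ complementary to $\g\cdot p$, so that $d\pi^P_p\colon W_p\to(T_qQ,I_Q)$ is a complex linear isomorphism. Now $W_p$ is an $I$-invariant subspace of $T_pP\subseteq T_pN$, so $d\pi^N_p|_{W_p}\colon W_p\to T_{i(q)}Q'$ is complex linear. It is injective: $\ker d\pi^N_p=\g^\C\cdot p$, while $\g^\C\cdot p\cap T_p\m^{-1}(0)=\g\cdot p$ (the infinitesimal form of the Hitchin–Kobayashi correspondence, part of the analysis behind Theorems~\ref{sec:introduction-pieces-hyperkahler} and \ref{sec:introduction-pieces-moduli}) and $W_p\cap\g\cdot p=0$. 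And it is surjective by the dimension count $\dim_\R W_p=\dim_\R T_qQ=\dim_\R T_{i(q)}Q'$, the last equality because $i|_Q\colon Q\to Q'$ is a homeomorphism of manifolds. Differentiating $\pi^N|_P=i\circ\pi^P$ at $p$ and restricting to $W_p$ gives $d\pi^N_p|_{W_p}=di_q\circ(d\pi^P_p|_{W_p})$, hence $di_q=(d\pi^N_p|_{W_p})\circ(d\pi^P_p|_{W_p})^{-1}$ is a complex linear isomorphism. Therefore $i|_Q$ is holomorphic with everywhere invertible differential, and being a homeomorphism it is a biholomorphism onto $Q'=i(Q)$.

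\textbf{Main obstacle.} The delicate point is the last step: one needs the exact way Theorem~\ref{sec:introduction-pieces-hyperkahler} equips $Q$ with $I_Q$ — in particular that the horizontal subspace realizing $I_Q$ sits inside $T_pP\subseteq T_pN$ and is $I$-invariant there — and one must reconcile the hyperK\"ahler-reduction picture of $Q$ with the $\G^\C$-quotient picture of $Q'=i(Q)$ at the infinitesimal level. Conceptually this is the statement that, locally, the Hitchin–Kobayashi homeomorphism $i$ is the Kempf–Ness identification of a K\"ahler quotient with the corresponding complex-symplectic quotient, hence automatically holomorphic (and, ultimately, Poisson and compatible with orbit type decompositions); the tangent-space computation above is its concrete manifestation. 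The earlier steps are softer: they rely on the structure theory of automorphism groups of polystable Higgs bundles and on elementary nonpositively curved geometry.
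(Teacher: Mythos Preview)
Your argument is correct, and it diverges from the paper's in both halves.

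For the conjugacy step, the paper proves a full Mostow decomposition for $\G^\C$ (Theorem~\ref{sec:introduction-mostow}) by putting a weak K\"ahler structure on $\G^\C$, exhibiting $\h^\perp\times\G$ as the zero set of a moment map for the left $H$-action, and running the Kempf--Ness style machinery of Heinzner--Schwarz and Mundet i Riera; the corollary about conjugacy of compact subgroups (Corollary~\ref{sec:most-decomp-1-conjugate}) is then read off. Your route through the nonpositively curved geometry of the space of Hermitian metrics $\G^\C/\G$ is shorter and more elementary: polar decomposition plus uniqueness of geodesics gives exactly the conjugacy statement needed here, without building any symplectic structure on $\G^\C$. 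What you lose is the full Mostow decomposition itself, but that stronger statement is not used elsewhere in the proof of Theorem~\ref{sec:introduction-HK-preserves-orbit-types}.

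For holomorphy, the paper identifies $i|_Q$ locally with the map $\m^{-1}(0)\cap S^H\to(\bH^1)^{H^\C}$ sending $(B,\Psi)$ to the unique $x$ with $(B,\Psi)=\theta(x)g$, and invokes that $x,g$ depend holomorphically on $(B,\Psi)$ via the slice theorem and the holomorphicity of the Kuranishi map $\theta$. Your tangent-space argument is more intrinsic: it uses only that $W_p=(\bH^1_p)^{H_p}$ is an $I$-invariant subspace of $T_pP\subset T_pN$ and that both quotient maps are holomorphic submersions for $I$. One small point you should make explicit is why $i|_Q$ is smooth before you differentiate it: this follows because $P$ is a submanifold of the embedded submanifold $N\subset\sC$ (the tangent spaces nest, $\im d_1\oplus(\bH^1)^H\subset\im D''\oplus(\bH^1)^{H^\C}$), so $\pi^N|_P$ is smooth, and then $i|_Q=\pi^N|_P\circ\sigma$ for any local section $\sigma$ of the submersion $\pi^P$. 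For injectivity of $d\pi^N_p|_{W_p}$ you can argue more directly than you do: $W_p\subset\ker(D'')^*$ while $\ker d\pi^N_p=\im D''$, and these meet trivially by the Hodge decomposition; your detour through $\g^\C\!\cdot p\cap T_p\m^{-1}(0)=\g\cdot p$ is correct but unnecessary.
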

Therefore, each stratum $Q$ in the orbit type decomposition of $\M$
acquires a complex symplectic structure from the corresponding stratum
in the orbit type decomposition of $\m^{-1}(0)/\G$. As a consequence,
each $Q$ admits a complex Poisson bracket. We will show that these
Poisson brackets glue to a complex Poisson bracket on the structure
sheaf of $\M$. To state the result more precisely, we recall that any
Higgs bundle $(A,\Phi)\in\m^{-1}(0)$ defines a deformation complex
$C_{\mu_\C}$, which is an elliptic complex (see
Section~\ref{sec:preliminaries}). Let $\bH^1$ denote the harmonic
space $\bH^1(C_{\mu_\C})$. In \cite{Fan2020}, it is shown that $\bH^1$
is a complex symplectic vector space, and the $\G^\C$-stabilizer
$H^\C$ at $(A,\Phi)$ acts linearly on it and preserves the complex
symplectic structure, where $H$ is the $\G$-stabilizer at
$(A,\Phi)$. Let $\nu_{0,\C}$ be the canonical complex moment map for
the $H^\C$-action on $\bH^1$. By \cite{Fan2020}, around $[A,\Phi]$,
the moduli space $\M$ is locally biholomorphic to an open neighborhood
of $[0]$ in the complex symplectic quotient
$\nu_{0,\C}^{-1}(0)\sslash H^\C$, which is an affine geometric
invariant theory (GIT) quotient. Note that
$\nu_{0,\C}^{-1}(0)\sslash H^\C$ also has an orbit type decomposition,
since every point in $\nu_{0,\C}^{-1}(0)\sslash H^\C$ has a unique
closed orbit, and this orbit has a orbit type (see
Section~\ref{sec:whitney-conditions}). By Mayrand
\cite{mayrand2018local}, the orbit type decomposition of
$\nu_{0,\C}^{-1}(0)\sslash H^\C$ is a Whitney stratification, and each
stratum is a complex symplectic submanifold and hence admits a complex
Poisson bracket. Moreover, these Poisson brackets glue to a Poisson
bracket on the structure sheaf such that the inclusion from each
stratum to $\nu_{0,\C}^{-1}(0)\sslash H^\C$ is a Poisson map. Then, we
will prove the following.
\begin{theoremintro}\label{sec:introduction-Poisson-structure}
  There is a unique complex Poisson bracket on the structure sheaf of
  $\M$ such that the inclusion $Q\hookrightarrow\M$ is a Poisson map
  for each stratum $Q$ in $\M$. Moreover, we have the following.
  \begin{enumerate}
  \item The local biholomorphism between $\M$ and
    $\nu_{0,\C}^{-1}(0)\sslash H^\C$ preserves the orbit type
    stratifications and is a Poisson map.
    
  \item Its restriction to each stratum $Q$ in $\M$ is a complex
    symplectomorphism, and hence serves as complex Darboux coordinates
    on $Q$.
  \end{enumerate}
\end{theoremintro}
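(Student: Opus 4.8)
\textit{Proof strategy.} The plan is to reduce the entire statement to Mayrand's finite-dimensional result applied to the affine complex symplectic quotient $\nu_{0,\C}^{-1}(0)\sslash H^\C$ (which is a genuine finite-dimensional GIT quotient, since $H$ is a compact group and $\bH^1$ is finite-dimensional), and then to glue the resulting local structures. Because the global Poisson bracket will be manufactured out of the local biholomorphisms, I would first prove the two ``moreover'' assertions at the level of a fixed such biholomorphism. So fix $p=[A,\Phi]\in\M$ and let $\phi_p\colon U_p\xrightarrow{\sim}V_p$ be the biholomorphism of \cite{Fan2020} from a neighborhood $U_p$ of $p$ in $\M$ onto a neighborhood $V_p$ of $[0]$ in $\nu_{0,\C}^{-1}(0)\sslash H^\C$. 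The first step is to check that $\phi_p$ carries the orbit type stratum of $\M$ through a point $q$ onto the orbit type stratum of the GIT quotient through $\phi_p(q)$: this amounts to identifying the $\G^\C$-stabilizer of a polystable representative over $q$ with the $H^\C$-stabilizer of the unique closed $H^\C$-orbit lying over $\phi_p(q)$, which follows from the slice description underlying the construction in \cite{Fan2020} — nearby polystable Higgs bundles correspond to closed $H^\C$-orbits in $\bH^1$ near $0$, the slice is $H^\C$-invariant, and stabilizers of points of the slice for the $\G^\C$-action lie in $H^\C$, so the two stabilizers coincide up to conjugacy, first inside $H^\C$ and hence inside $\G^\C$.

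The heart of the matter is the compatibility of the complex symplectic structures, which I expect to be the main obstacle. By Theorem~\ref{sec:introduction-pieces-hyperkahler} and Theorem~\ref{sec:introduction-HK-preserves-orbit-types}, each stratum $Q\subset\M$ carries a complex symplectic form obtained by restricting the complex symplectic form of $\sC$ to $\pi^{-1}(Q)$ and descending; on the other side, each stratum of $\nu_{0,\C}^{-1}(0)\sslash H^\C$ carries the reduced complex symplectic form coming from $\bH^1$. I would prove these agree under $\phi_p$ by tracing the construction of \cite{Fan2020}: the local model is built from a finite-dimensional Kuranishi-type slice at $(A,\Phi)$ with tangent space $\bH^1$, and the restriction to this slice of the complex symplectic form of $\sC$ coincides with the constant complex symplectic form on $\bH^1$; moreover the reduction imposed by $\mu_\C=0$ modulo $\G^\C$ on the $\sC$ side matches the reduction $\nu_{0,\C}=0$ modulo $H^\C$ on the slice side, so the two descended forms on corresponding strata are equal. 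This is essentially a complex (or hyperK\"ahler) analogue of the symplectic slice theorem carried out in infinite dimensions, and it is the step where the passage between the infinite-dimensional quotient and its finite-dimensional model must be handled with care; the needed ingredients are, however, exactly those already assembled in \cite{Fan2020} together with Theorem~\ref{sec:introduction-pieces-hyperkahler}.

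Granting the two ``moreover'' statements, the construction of the global bracket is formal. For each $p$, transport Mayrand's Poisson bracket on $\mathcal{O}_{V_p}$ through $\phi_p$ to obtain a complex Poisson bracket $\{\,,\}^{(p)}$ on $\mathcal{O}_{U_p}$; by the symplectomorphism property and Mayrand's theorem (strata inclusions into the GIT quotient are Poisson), the restriction of $\{\,,\}^{(p)}$ to each stratum $Q\cap U_p$ is the intrinsic Poisson bracket of $Q$. On an overlap $U_p\cap U_q$ the brackets $\{\,,\}^{(p)}$ and $\{\,,\}^{(q)}$ then agree after restriction to every stratum, so for local sections $f,g$ the holomorphic function $\{f,g\}^{(p)}-\{f,g\}^{(q)}$ vanishes on the union of all strata of $U_p\cap U_q$; since the orbit type decomposition of $\M$ is a Whitney stratification satisfying the frontier condition (Theorem~\ref{sec:introduction-pieces-moduli}), the union of the maximal strata is dense, and $\M$ is reduced (indeed normal, by \cite{Fan2020}), so this function is identically zero. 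Hence the $\{\,,\}^{(p)}$ glue to a single bracket $\{\,,\}_\M$ on $\mathcal{O}_\M$; antisymmetry, the Leibniz rule and the Jacobi identity are local and inherited from Mayrand's brackets, and each inclusion $Q\hookrightarrow\M$ is Poisson because it is so locally.

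Finally, for uniqueness, if $\{\,,\}'$ is any complex Poisson bracket on $\mathcal{O}_\M$ making every stratum inclusion Poisson, then for local sections $f,g$ both $\{f,g\}_\M$ and $\{f,g\}'$ restrict on each stratum $Q$ to $\{f|_Q,g|_Q\}_Q$, hence they agree on the dense union of maximal strata and therefore everywhere, again by reducedness. This proves the uniqueness clause. Statement (1) is then immediate: the biholomorphism $\phi_p$ is Poisson by the very definition of $\{\,,\}^{(p)}$, and it preserves the orbit type stratifications by the first step; and statement (2) is precisely the symplectomorphism property established in the second step, from which the interpretation as complex Darboux coordinates on $Q$ follows at once. \qed
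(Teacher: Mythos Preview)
Your overall architecture is correct and essentially matches the paper's: reduce to showing that the Kuranishi map $\varphi$ is a complex symplectomorphism on each stratum, then use this together with Mayrand's theorem for the local model to obtain the global bracket. Your gluing argument (transport Mayrand's bracket through each $\phi_p$, then invoke density of the top strata and normality to match on overlaps) is a minor rearrangement of what the paper does --- the paper instead defines $\{f,g\}$ stratum-wise on $\M$ and then shows holomorphicity by pulling back along $\varphi$ --- but the two are equivalent and rest on the same input.

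The genuine gap is in your second paragraph. You assert that ``the restriction to this slice of the complex symplectic form of $\sC$ coincides with the constant complex symplectic form on $\bH^1$'' and that the ingredients for this are already in \cite{Fan2020}. They are not. The Kuranishi map $\theta\colon B\to\sC$ is \emph{not} the linear inclusion of $\bH^1$: it is the inverse of $F(\alpha,\eta)=(\alpha,\eta)+(D'')^*G[\alpha'',\eta]$, so its image is a curved submanifold of $(A,\Phi)+\ker(D'')^*$. At $(A,\Phi)$ the tangent space is $\bH^1$ and the forms agree there, but this says nothing away from the basepoint. What must be proved is that $\theta^*\Omega_\C$ equals the constant form $\omega_\C$ on all of $B$, and this is a computation that does not appear in \cite{Fan2020}. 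The paper carries it out explicitly: writing $d_{(B,\Psi)}F(\alpha,\eta)=(\alpha,\eta)+(D'')^*G[\alpha_0'',\eta_0;\alpha,\eta]$, one must check that the correction term $(D'')^*G[\cdots]$ is $\Omega_J$- and $\Omega_K$-orthogonal to every element of $\ker(D'')^*$. This uses the K\"ahler identity $(D'')^*=-i[*,D']$ to rewrite $\Omega_J((\alpha_1,\eta_1),(D'')^*G[\cdots])$ as an $L^2$ pairing with $D''J(\alpha_1,\eta_1)$, and then the identity $D''J(\alpha_1,\eta_1)=(iD'(\alpha_1,\eta_1))^*=0$ for $(\alpha_1,\eta_1)\in\ker(D'')^*$; similarly for $\Omega_K$. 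Without this lemma your claim that $\varphi$ restricts to a symplectomorphism on each stratum is unsupported, and everything downstream (the Poisson property of $\phi_p$, the compatibility on overlaps, and the Darboux interpretation) depends on it.
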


Following Mayrand \cite{mayrand2018local} and Sjamaar-Lerman
\cite{sjamaar1991stratified}, a complex space is called a
\textit{stratified complex symplectic space} if it admits a complex
Whitney stratification, a complex symplectic structure on each
stratum, and a complex Poisson bracket on the structure sheaf such that the
inclusion from each stratum to the complex space is a holomorphic
Poisson map. As a consequence of the main theorems proved in this
paper, we conclude the following.
\begin{corollaryintro*}
  The moduli space $\M$ of Higgs bundles is a stratified complex
  symplectic space with the orbit type decomposition as the complex
  Whitney stratification.
\end{corollaryintro*}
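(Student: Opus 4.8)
The plan is to assemble Theorems~\ref{sec:introduction-pieces-hyperkahler}--\ref{sec:introduction-Poisson-structure} together with the fact from \cite{Fan2020} that $\M$ is a normal complex space. By definition, a stratified complex symplectic space requires three pieces of data: a complex Whitney stratification, a complex symplectic form on each stratum, and a complex Poisson bracket on the structure sheaf $\mathcal{O}_\M$ for which every stratum inclusion is a holomorphic Poisson map. I would produce each in turn.

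The complex Whitney stratification is the orbit type decomposition $\M=\coprod_{(L)}(\text{components of }\B^{ps}_{(L)}/\G^\C)$: by Theorem~\ref{sec:introduction-pieces-moduli} each stratum $Q$ is a locally closed complex submanifold of $\M$ and the decomposition satisfies the frontier condition as well as Whitney conditions $A$ and $B$. For the complex symplectic form on a stratum $Q$, I would transport the hyperK\"ahler data through the Hitchin-Kobayashi correspondence: by Theorem~\ref{sec:introduction-HK-preserves-orbit-types}, $Q=i(Q')$ for a unique stratum $Q'$ of $\m^{-1}(0)/\G$ and $i\colon Q'\to Q$ is a biholomorphism for the complex structure $I_{Q'}$ on $Q'$ descended from $\sC$; by Theorem~\ref{sec:introduction-pieces-hyperkahler} the hyperK\"ahler structure of $\sC$ restricts to $\pi^{-1}(Q')$ and descends to $Q'$, so $Q'$ carries the form $\omega_J+\sqrt{-1}\,\omega_K$, which is closed, nondegenerate, and holomorphic with respect to $I_{Q'}$ --- the standard feature of any hyperK\"ahler manifold. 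Pushing this form forward along $i$ gives a holomorphic symplectic form $\omega_Q$ on $Q$ with respect to the complex structure $Q$ inherits from $\M$ (this matches $I$ under $\pi$ by Theorem~\ref{sec:introduction-pieces-moduli}), and this is precisely the symplectic structure referred to in Theorem~\ref{sec:introduction-Poisson-structure}.

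For the Poisson bracket, Theorem~\ref{sec:introduction-Poisson-structure} already provides a complex Poisson bracket on $\mathcal{O}_\M$ for which each inclusion $Q\hookrightarrow\M$ is a Poisson map, so the remaining point is to verify that the Poisson structure this induces on $Q$ is the nondegenerate one determined by $\omega_Q$. I would deduce this from part~(2) of Theorem~\ref{sec:introduction-Poisson-structure}: near any point $\M$ is locally biholomorphic, by a Poisson map preserving the orbit type stratifications, to an open subset of the affine GIT quotient $\nu_{0,\C}^{-1}(0)\sslash H^\C$, and this biholomorphism restricts on each stratum to a complex symplectomorphism onto the corresponding stratum of the model. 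By Mayrand \cite{mayrand2018local}, on $\nu_{0,\C}^{-1}(0)\sslash H^\C$ the sheaf-theoretic bracket restricts on each stratum to the bracket of that stratum's complex symplectic form; pulling this back through the local biholomorphism gives the same statement on $\M$. Assembling the three pieces yields the corollary.

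Essentially all of the analytic content is already contained in Theorems~\ref{sec:introduction-pieces-hyperkahler}--\ref{sec:introduction-Poisson-structure} and in \cite{Fan2020,mayrand2018local}, so the proof is largely bookkeeping. The one step that carries weight --- and the closest thing to an obstacle --- is matching the three \emph{a priori} independent identifications: the complex and symplectic data on $\pi^{-1}(Q)\subset\sC$ coming from $I$, $J$, $K$; the data transported to $Q'\subset\m^{-1}(0)/\G$ and then to $Q\subset\M$ via $i$; and the data appearing in the local model $\nu_{0,\C}^{-1}(0)\sslash H^\C$. These agree by construction, but spelling out the identifications explicitly --- using that $i$ is a biholomorphism for $I_{Q'}$ and that the local model is both Poisson and stratum-wise symplectic --- is where the argument must be careful.
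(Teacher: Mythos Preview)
Your proposal is correct and follows the paper's approach: the corollary is stated in the paper as an immediate consequence of Theorems~\ref{sec:introduction-pieces-hyperkahler}--\ref{sec:introduction-Poisson-structure}, with no separate proof given. One small simplification: the ``remaining point'' you flag --- that the sheaf-theoretic bracket restricts on each stratum $Q$ to the bracket of $\omega_Q$ --- is immediate from the construction in Section~\ref{sec:poisson-structure}, where $\{f,g\}$ is \emph{defined} stratum-wise by $\{f,g\}|_{U\cap Q}=\{f|_{U\cap Q},g|_{U\cap Q}\}_Q$, so no appeal to the local model is needed for that step.
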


To prove Theorem~\ref{sec:introduction-pieces-hyperkahler} and the
first part of Theorem~\ref{sec:introduction-pieces-moduli}, the basic
tools are local slice theorems for the $\G$-action and the
$\G^\C$-action. Since the $\G$-action is proper, its local slice
theorem is available. To obtain a local slice theorem for the
$\G^\C$-action around Higgs bundles satisfying Hitchin's equation, we
adapt Buchdahl and Schumacher's argument in \cite[Proposition
4.5]{buchdahl2020polystability}. To prove the second part of
Theorem~\ref{sec:introduction-pieces-moduli}, we simply follow
Mayrand's arguments in \cite[\S4.6, \S4.7]{mayrand2018local}. The idea
is that the Whitney conditions and the frontier condition are local
conditions and therefore can be checked on an open neighborhood of
$[0]$ in $\nu_{0,\C}^{-1}(0)\sslash H^\C$, provided that the
biholomorphism between $\M$ and a local model
$\nu_{0,\C}^{-1}(0)\sslash H^\C$ preserves the orbit type
decompositions. We will prove that this is the case. These results
will be proved in Sections~\ref{sec:orbit-type-decomp} and
\ref{sec:whitney-conditions}. Moreover, in
Section~\ref{sec:preliminaries}, we will review results in
\cite{Fan2020}

To prove Theorem~\ref{sec:introduction-HK-preserves-orbit-types}, the
major obstacle is to show that the Hitchin-Kobayashi correspondence
preserves orbit types. We will follow Sjamaar's argument in
\cite[Theorem 2.10]{sjamaar1995holomorphic}. However, this argument
crucially relies on Mostow's decomposition for complex reductive Lie
groups. Since $\G^\C$ is infinite-dimensional, we need to extend
Mostow's decomposition to $\G^\C$ in the following way.
\begin{theoremintro}[Mostow's
  decomposition]\label{sec:introduction-mostow}
  Let $H$ be a compact subgroup of $\G$ and $\h$ its Lie algebra. The
  map
  \begin{equation*}
    \h^\perp\times_H\G\to\G^\C/H^\C\qquad[s,u]\mapsto H^\C\exp(is)u
  \end{equation*}
  is a $\G$-equivariant bijection, where $\G$ acts on both sides by
  right multiplication, and $\h^\perp$ is the $L^2$-orthogonal
  complement of $\h$ in the Lie algebra $\Omega^0(\g_E)$ of $\G$.
\end{theoremintro}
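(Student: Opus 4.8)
The plan is to adapt Mostow's classical argument for reductive Lie groups, whose geometric core is nearest-point projection onto a totally geodesic submanifold of a Hadamard manifold; the only genuinely new point is that here the ambient ``symmetric space'' is infinite-dimensional while the relevant submanifold stays finite-dimensional. Write $\mathcal P$ for the homogeneous space $\G^\C/\G$, identified with the space of Hermitian metrics on $E$ compatible with the fixed reduction: a simply connected manifold (modelled on $\Omega^0(\g_E)$ after a suitable Sobolev completion) on which $\G^\C$ acts by isometries for the canonical metric, diffeomorphic to $\Omega^0(\g_E)$ via $\xi\mapsto\exp(i\xi)\cdot o$, where $o$ is the base point with $\mathrm{Stab}_{\G^\C}(o)=\G$ (this is the Cartan decomposition $\G^\C=\exp(i\Omega^0(\g_E))\cdot\G$, i.e.\ the fibrewise polar decomposition globalised). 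The geodesics through $o$ are exactly $t\mapsto\exp(it\xi)\cdot o$; the canonical metric restricts at $o$ to the $L^2$ inner product on $i\Omega^0(\g_E)$, is geodesically complete, and has nonpositive curvature, so $x\mapsto d_{\mathcal P}(p,x)^2$ is convex along geodesics and strictly convex off the geodesic through $p$. Since $H$ is a compact subgroup of a Lie group it is a finite-dimensional compact Lie group, $H^\C$ is complex reductive with Cartan decomposition $H^\C=\exp(i\h)\cdot H$ and $H^\C\cap\G=H$, and the orbit $\mathcal O:=H^\C\cdot o$ is a finite-dimensional, complete, totally geodesic submanifold of $\mathcal P$ isometric to $H^\C/H$, hence closed in $\mathcal P$; moreover $T_o\mathcal O=i\h$, and because the $L^2$ metric is $\Ad(H)$-invariant its orthogonal complement in $T_o\mathcal P$ is $i\h^\perp$, which also records that $\h^\perp$ is $\Ad(H)$-stable so that $\h^\perp\times_H\G$ is defined.

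Granting this, nearest-point projection onto $\mathcal O$ works as in finite dimensions, and conveniently the compactness input is purely finite-dimensional: a minimizing sequence for $d_{\mathcal P}(p,\cdot)$ on $\mathcal O$ is bounded, so by Hopf--Rinow in the complete finite-dimensional manifold $\mathcal O$ it subconverges to a minimizer $q$, unique by strict convexity of $d_{\mathcal P}(p,\cdot)^2$ along geodesics of $\mathcal O$; a first-variation computation shows the geodesic from $q$ to $p$ meets $\mathcal O$ orthogonally, and conversely any $q\in\mathcal O$ from which the geodesic to $p$ is orthogonal to $\mathcal O$ is automatically this unique minimizer (a convex function on $\mathcal O$ with vanishing differential attains its minimum there). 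Writing $q=\exp(i\sigma)\cdot o$ with $\sigma\in\h$ and translating by the isometry $\exp(-i\sigma)\in H^\C$, orthogonality says the geodesic from $o$ to $\exp(-i\sigma)p$ has initial velocity in $i\h^\perp$, i.e.\ $\exp(-i\sigma)p=\exp(is)\cdot o$ for a unique $s\in\h^\perp$; so every point of $\mathcal P$ equals $\exp(i\sigma)\exp(is)\cdot o$ with $\sigma\in\h,\ s\in\h^\perp$. For surjectivity, apply this to $p=g\cdot o$ for $g\in\G^\C$: then $g\in\exp(i\sigma)\exp(is)\G$, and since $\exp(i\sigma)\in H^\C$ the coset $H^\C g$ equals $H^\C\exp(is)u$ for a suitable $u\in\G$. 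For injectivity, if $H^\C\exp(is_1)u_1=H^\C\exp(is_2)u_2$, write the connecting element of $H^\C$ via $\exp(i\h)\cdot H$ and absorb its $H$-factor to reduce to an identity $\exp(i\sigma_1)\exp(is_1)u_1=\exp(i\sigma_2)\exp(is_2)u_2$ in $\G^\C$ with $\sigma_j\in\h$; projecting to $\mathcal P$, the point $\exp(i\sigma_j)\cdot o$ is the unique nearest point of $\mathcal O$ to the common image, because the connecting geodesic meets $\mathcal O$ orthogonally by the translation trick, so $\sigma_1=\sigma_2$; cancelling and projecting again gives $s_1=s_2$ by injectivity of $\xi\mapsto\exp(i\xi)\cdot o$, hence $u_1=u_2$, and unwinding the reduction this is exactly $[s_1,u_1]=[s_2,u_2]$ in $\h^\perp\times_H\G$. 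Well-definedness on $\h^\perp\times_H\G$ and $\G$-equivariance for the right multiplications follow from the identity $\exp(i\Ad(h)s)=h\exp(is)h^{-1}$ used above.

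The main obstacle is making the geometry of the first paragraph rigorous in infinite dimensions: one must fix the functional-analytic framework (presumably the $L^2_k$ Sobolev completions of $\G$ and $\G^\C$ used elsewhere in the paper), check that $\mathcal P$ is then a bona fide weak Riemannian manifold, and justify geodesic completeness together with nonpositivity of the curvature of the canonical metric on the space of Hermitian metrics --- classical facts (the latter underlies geodesic convexity of the Donaldson functional), but the weakness of the $L^2$ metric calls for some care. By contrast, because the submanifold $\mathcal O$ onto which we project is finite-dimensional, Hopf--Rinow and the extraction of a convergent subsequence are purely finite-dimensional, and only convexity of the ambient distance function is a genuinely infinite-dimensional ingredient; the Cartan decompositions of $\G^\C$ and $H^\C$, the identity $H^\C\cap\G=H$, and $\G$-equivariance are routine.
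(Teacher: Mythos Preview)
Your approach is correct and genuinely different from the paper's. You run Mostow's classical geometric argument: realize $\G^\C/\G$ as a nonpositively curved space $\mathcal P$, note that $H^\C\cdot o$ is a finite-dimensional totally geodesic submanifold, and use nearest-point projection, with the pleasant observation that existence of the minimizer only needs Hopf--Rinow on the finite-dimensional orbit. The paper instead recasts the problem as a Kempf--Ness statement: it equips $\G^\C\cong\Omega^0(\g_E)\times\G$ (via polar decomposition) with a weak K\"ahler structure for which the left $H$-action has moment map $\kappa(s,u)=Ps$, where $P\colon\Omega^0(\g_E)\to\h$ is the $L^2$-projection, so that $\kappa^{-1}(0)=\h^\perp\times\G$; the bijection then amounts to showing that every $H^\C$-orbit meets $\kappa^{-1}(0)$ in a single $H$-orbit, which the paper deduces from Mundet i Riera's framework after verifying the one-parameter condition $\lim_{t\to\infty}(\kappa(\exp(its)g),s)_{L^2}>0$.

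The two arguments are close relatives: your squared-distance function restricted to $\mathcal O$ is essentially the Kempf--Ness functional, and the monotonicity the paper checks is the infinitesimal form of the convexity you invoke. The packaging, however, is different. The paper's route has the advantage that Riera's results are stated for finite-dimensional groups acting on possibly infinite-dimensional K\"ahler manifolds and absorb the analytic subtleties, so once the weak K\"ahler structure is in place (done in the paper's Appendix) only a short limit computation remains. Your route is geometrically more transparent and avoids the moment-map machinery, but, as you correctly flag, it shifts the burden to establishing the CAT(0) behaviour of the weak metric on $\mathcal P$ in the Sobolev setting; this is certainly true for the space of Hermitian metrics (geodesics and curvature are fibrewise those of $GL_n(\C)/U(n)$), but writing it down carefully is the real work in your version.
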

It is likely that the map mentioned in
Theorem~\ref{sec:introduction-mostow} is not only a bijection but also
a diffeomorphism. That said, for the purpose of this paper, a
bijection is all we need. Once Mostow's decomposition for $\G^\C$ is
established, the rest of the proof follows easily. To prove
Theorem~\ref{sec:introduction-mostow}, we will instead prove that the
map $H^\C\times_H(\h^\perp\times\G)\to\G^\C$ is a bijection (see
Theorem~\ref{sec:mostow-decomposition-main-theorem} for a more precise
statement). To this end, following the Heinzner and Schwarz's idea in
\cite[\S9]{heinzner2007cartan}, we will realize $\h^\perp\times\G$ as
a zero set of some moment map on $\G^\C$. Therefore, we need to show
that $\G^\C$ is a weak K\"ahler manifold and that the left $H$-action
on $\G^\C$ is Hamiltonian with a suitable moment map. In
\cite{Huebschmann2013}, Huebschmann and Leicht provided a framework to
deal with this problem. Although their results are in
finite-dimensional settings, they can be carried out for $\G^\C$
without any problems. For the sake of completeness, we provide the
details in the \nameref{sec:appendix}, and the proofs are taken or
adapted from \cite{Huebschmann2013}. Then, it will be shown that every
$H^\C$-orbit in $\G^\C$ intersects $\h^\perp\times\G$, and the
intersection is a single $H$-orbit. Here, we will use the framework
laid out in Mundet I Riera's paper \cite{Riera2000}. All these results
will be proved in Section~\ref{sec:most-decomp}.

To prove Theorem~\ref{sec:introduction-Poisson-structure}, we need to
define a complex Poisson bracket on the structure sheaf of $\M$. Since
every stratum in the orbit type decomposition has a complex Poisson
bracket, and $\M$ is a disjoint union of these strata, we may
pointwise define the complex Poisson bracket of any two holomorphic
functions on $\M$. Therefore, the real question is to answer whether
the resulting function is still holomorphic. We will show that the
local biholomorphism between $\M$ and a local model
$\nu_{0,\C}^{-1}(0)\sslash H^\C$ is a Poisson map. Then,
Theorem~\ref{sec:introduction-Poisson-structure} follows from
this. Now the key observation to see that the local biholomorphism is
a Poisson map is that the Kuranishi map $\theta$ (see \cite{Fan2020}
for the construction of Kuranishi maps and Kuranishi local models)
induces the local biholomorphism and preserves the complex symplectic
structures on $\bH^1$ and $\sC$. Moreover, all the complex symplectic
structures on the strata in the orbit type decompositions of $\M$ and
$\nu_{0,\C}^{-1}(0)\sslash H^\C$ come from those on $\sC$ and $\bH^1$.

Finally, we want to say a few words on the topologies we will be using
on various spaces throughout this paper. By definition, the moduli
space $\M$ and the hyperK\"ahler quotient $\m^{-1}(0)/\G$ are equipped
with the $C^\infty$-topology. That said, in order to use the implicit
function theorem, in most of the proofs, we need to complete the
spaces $\G^\C$, $\G$ and $\sC$ with respect to the Sobolev
$L_{k+1}^2$-norm and $L_k^2$-norm. Here, $k>1$. Then, the resulting
spaces $\G_{k+1}$ and $\G^\C_{k+1}$ are Banach Lie groups acting
smoothly on the Banach affine manifold $\sC_k$. Moreover, we also need
to extend the moment map $\m$ to a moment map $\m_k$ on $\sC_k$. By
the Sobolev multiplication theorem, this is well-defined. On the other
hand, by the regularity results in \cite[Theorem
3.17]{stecker2015moduli} and \cite[Lemma 3.11, 3.12 and Corollary
3.13]{Fan2020}, the natural maps
$\m^{-1}(0)/\G\to\m_k^{-1}(0)/\G_{k+1}$ and
$\M\to\B^{ps}_k/\G^\C_{k+1}$ are homeomorphisms. Moreover, it will be
clear in the proofs of
Theorem~\ref{sec:introduction-pieces-hyperkahler} and
\ref{sec:introduction-pieces-moduli} that they preserve orbit type
decompositions. As a result, for notational convenience, we will drop
these subscripts that indicate the Sobolev completions and work with
these Sobolev completions in the proofs whenever necessary. This
should not cause any confusion. Finally, it should be noted that
Theorem~\ref{sec:introduction-mostow}, strictly speaking, should be a
result for the Banach Lie groups $\G_{k+1}$ and $\G_{k+1}^\C$.

\vspace{0.5cm}
\noindent\textbf{Acknowledgment}. This paper is part of my Ph.D. thesis. I
would like to thank my advisor, Professor Richard Wentworth, for
suggesting this problem and his generous support and guidance.

\section{Preliminaries}\label{sec:preliminaries}
In this section, we review some useful results in \cite{Fan2020}. We
start with deformation complexes. Every Higgs bundle
$(A,\Phi)\in\m^{-1}(0)$ defines a deformation complex
\begin{equation*}
  C_{\mu_\C}\colon\qquad\Omega^0(\g_E^\C)\xrightarrow{D''}\Omega^{0,1}(\g_E^\C)\oplus\Omega^{1,0}(\g_E^\C)\xrightarrow{D''}\Omega^{1,1}(\g_E^\C)
\end{equation*}
where $D''=\bar{\partial}_A+\Phi$.
\begin{proposition}[{\cite[\S1]{Simpson1992} and \cite[\S10]{Simpson1994}}]
  $C_{\mu_\C}$ is an elliptic complex and a differential graded Lie
  algebra. Moreover, the K\"ahler's identities
  \begin{equation*}
    (D'')^*=-i[*,D']\qquad (D')^*=+i[*,D'']
  \end{equation*}
  hold, where $D'=\partial_A+\Phi^*$ and $*$ is the Hodge star.
\end{proposition}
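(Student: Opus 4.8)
The plan is to verify the three structural claims --- that $C_{\mu_\C}$ is a complex, is elliptic, and is a differential graded Lie algebra --- by essentially formal arguments, and then to derive the K\"ahler identities from the classical ones on $X$ together with a single fiberwise computation.

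\emph{The complex.} Writing $D'' = \bar{\partial}_A + \ad_\Phi$ as a sum of two odd operators, $(D'')^2 = \bar{\partial}_A^2 + (\bar{\partial}_A\ad_\Phi + \ad_\Phi\bar{\partial}_A) + \ad_\Phi^2$. The term $\bar{\partial}_A^2$ is the $(0,2)$-component of $F_A$, which vanishes since $\dim_\C X = 1$; in particular $\bar{\partial}_A$ then defines a holomorphic structure and $A$ is its Chern connection for the given Hermitian metric. The term $\ad_\Phi^2 = \tfrac12\ad_{[\Phi\wedge\Phi]}$ vanishes because $[\Phi\wedge\Phi]$ is a $(2,0)$-form on a curve, and the middle term equals $\ad_{\bar{\partial}_A\Phi}$ with $\bar{\partial}_A\Phi = \mu_\C(A,\Phi) = 0$ by hypothesis. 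Hence $(D'')^2 = 0$. \emph{Ellipticity.} Since $\ad_\Phi$ has order zero, the principal symbol of $D''$ coincides with that of $\bar{\partial}_A$, so the symbol sequence of $C_{\mu_\C}$ is, at the level of symbols, the direct sum of two copies of the Dolbeault symbol complex of $X$ with coefficients in a line bundle, which is exact off the zero section of $T^*X$; equivalently, the Laplacian $D''(D'')^* + (D'')^*D''$ has the same principal symbol as (twice) the $\bar{\partial}_A$-Laplacian. Therefore $C_{\mu_\C}$ is elliptic.

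\emph{Differential graded Lie algebra.} On the Riemann surface $X$ the graded vector space $\bigoplus_p\Omega^p(\g_E^\C)$ is concentrated in total degrees $0,1,2$, and these are precisely the three terms of $C_{\mu_\C}$; the wedge product combined with the bracket on $\g_E^\C$ makes it a graded Lie algebra. The operator $\bar{\partial}_A$ is a degree-$(+1)$ graded derivation by the Leibniz rule, and $\ad_\Phi$ is a degree-$(+1)$ graded derivation by the graded Jacobi identity, using that $\Phi$ has odd total degree. Their sum $D''$ is thus a square-zero graded derivation of degree $+1$, so $(C_{\mu_\C}, D'', [\,\cdot\,,\cdot\,])$ is a DGLA.

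\emph{K\"ahler identities.} We combine two inputs. First, the classical K\"ahler identities on $X$ with its K\"ahler metric, applied to the unitary connection $A$, which on a Riemann surface take the form $\bar{\partial}_A^* = -i[*,\partial_A]$ and $\partial_A^* = +i[*,\bar{\partial}_A]$, where $[\,\cdot\,,\cdot\,]$ is the graded commutator and $*$ acts on $\g_E^\C$-valued forms. Second, the zeroth-order analogues for the Higgs field, $(\ad_\Phi)^* = -i[*,\ad_{\Phi^*}]$ and $(\ad_{\Phi^*})^* = +i[*,\ad_\Phi]$, where $\Phi^*$ is the Hermitian adjoint of $\Phi$ with respect to the bundle metric. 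Because every operator here is a pointwise $\C$-linear bundle map, this second pair of identities is checked fiberwise: in a local unitary frame for $E$ and a normal holomorphic coordinate on $X$ one writes down the action of wedging with $\Phi$, wedging with $\Phi^*$, and the Hodge star on forms of each bidegree, and compares. This fiberwise linear-algebra check is the only genuine computation in the proof, and the place where signs and bidegree bookkeeping must be handled with care; it is the content of the cited computations of Simpson, while everything else is formal. Adding the two pairs gives $(D'')^* = \bar{\partial}_A^* + (\ad_\Phi)^* = -i[*,\partial_A + \Phi^*] = -i[*,D']$ and, symmetrically, $(D')^* = +i[*,D'']$.
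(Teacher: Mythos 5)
The paper offers no proof of this proposition, simply citing Simpson, and your argument is exactly the standard one found in those references: $(D'')^2=0$ from $\dim_\C X=1$ and $\bar{\partial}_A\Phi=0$, ellipticity from the symbol of $D''$ agreeing with that of $\bar{\partial}_A$, and the K\"ahler identities by superposing the classical bundle-valued identities for $\partial_A,\bar{\partial}_A$ with their zeroth-order analogues for $\ad_\Phi,\ad_{\Phi^*}$. Your write-up is correct; the only substantive computation (the fiberwise zeroth-order identity) is deferred to Simpson rather than carried out, which you flag explicitly and which matches the level of detail the paper itself provides.
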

Another elliptic complex (see \cite[p.85]{Hitchin1987b}) associated
with $(A,\Phi)$ is the following.
\begin{equation*}
  C_{Hit}\colon
  \Omega^0(\g_E)\xrightarrow{d_1}\Omega^1(\g_E)\oplus\Omega^{1,0}(\g_E^\C)\xrightarrow{d_2\oplus D''}\Omega^2(\g_E)\oplus\Omega^{1,1}(\g_E^\C)
\end{equation*}
Here, $d_1(u)=(d_Au,[\Phi,u])$ and $d_2$ is the derivative of $\mu$
\eqref{eq:HitchinEq} at $(A,\Phi)$. By direct computation, we have the
following. As a consequence, throughout this paper, we will use $\bH^1$ to denote
either $\bH^1(C_{\mu_\C})$ or $\bH^1(C_{Hit})$.

\begin{proposition}
  The map $\Omega^1(\g_E)\to\Omega^{0,1}(\g_E)$ given by
  $\alpha\mapsto\alpha''$ induces an isomorphism
  $\bH^1(C_{Hit})\xrightarrow{\sim}\bH^1(C_{\mu_\C})$, where $\alpha''$
  denotes the $(0,1)$ component of $\alpha$.
\end{proposition}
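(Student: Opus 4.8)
The plan is to bypass Hodge theory and argue at the level of cocycles, using a single K\"ahler-identity computation as the only real input. (Throughout one works after the Sobolev completions fixed in the introduction, so that ellipticity, closed range and Fredholmness are available.) Introduce the comparison map $\phi\colon C_{Hit}\to C_{\mu_\C}$: the inclusion $\Omega^0(\g_E)\hookrightarrow\Omega^0(\g_E^\C)$ in degree $0$, the map $(\alpha,\varphi)\mapsto(\alpha'',\varphi)$ in degree $1$, and the projection onto $\Omega^{1,1}(\g_E^\C)$ in degree $2$. Using $\bar{\partial}_Au=(d_Au)''$, the vanishing of $(2,0)$- and $(0,2)$-forms on a curve, and the graded symmetry $[\Phi,\alpha'']=[\alpha'',\Phi]$, one checks that $\phi$ is a chain map; hence it sends $C_{Hit}$-cocycles to $C_{\mu_\C}$-cocycles and $C_{Hit}$-coboundaries to $C_{\mu_\C}$-coboundaries, and the induced map on $\bH^1$ is the one in the statement. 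Two remarks: since $\alpha$ is $\g_E$-valued, $\alpha^{1,0}=-(\alpha'')^*$, so on one-forms $\alpha\mapsto\alpha''$ is a real-linear isomorphism onto $\Omega^{0,1}(\g_E^\C)$ with inverse $\beta\mapsto\beta-\beta^*$; and it intertwines $I$ with multiplication by $\sqrt{-1}$, so the induced map on $\bH^1$ will automatically be complex linear.

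For injectivity, suppose $(\alpha,\varphi)$ is a $C_{Hit}$-cocycle with $(\alpha'',\varphi)=D''s$, $s\in\Omega^0(\g_E^\C)$, and write $s=u+iv$ with $u,v\in\Omega^0(\g_E)$. Since $\alpha=\alpha''-(\alpha'')^*=\bar{\partial}_As-\partial_A(s^*)$ and $s^*=-u+iv$, a short computation gives $\alpha=d_Au+I(d_Av)$ and $\varphi=[\Phi,u]+I[\Phi,v]$; thus $(\alpha,\varphi)=d_1u+I(d_1v)$. Being a cocycle, $d_2(\alpha,\varphi)=0$, and since $d_2\circ d_1=0$ this forces $d_2(I(d_1v))=0$. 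Pairing with $v$ and using the moment-map property of $\mu$ together with the compatibility of the K\"ahler form and the metric gives $\langle d_2(I(d_1v)),v\rangle_{L^2}=\|d_1v\|_{L^2}^2$; hence $d_1v=0$ and $(\alpha,\varphi)=d_1u$ is a coboundary, so $\phi_*$ is injective.

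For surjectivity, let $(\beta,\varphi)$ be a $C_{\mu_\C}$-cocycle and set $\alpha_0=\beta-\beta^*\in\Omega^1(\g_E)$, so $\phi(\alpha_0,\varphi)=(\beta,\varphi)$. The pair $(\alpha_0,\varphi)$ need not be a $C_{Hit}$-cocycle, but its only defect is $d_2(\alpha_0,\varphi)$, since the $\Omega^{1,1}(\g_E^\C)$-component of the $C_{Hit}$-differential at $(\alpha_0,\varphi)$ equals the $C_{\mu_\C}$-differential at $(\beta,\varphi)$, which vanishes. The moment-map property gives $d_2(\alpha_0,\varphi)\perp\ker d_1$. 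The key point (the infinitesimal Kempf--Ness identity used above) is that, after the identification $\Omega^2(\g_E)\cong\Omega^0(\g_E)$, the operator $v\mapsto d_2(I(d_1v))$ is the nonnegative self-adjoint elliptic operator $d_1^*d_1$, whose kernel is $\ker d_1$ and whose range is therefore the closed subspace $(\ker d_1)^\perp$. Hence there is $v\in\Omega^0(\g_E)$ with $d_2(I(d_1v))=d_2(\alpha_0,\varphi)$, and $(\alpha,\varphi):=(\alpha_0,\varphi)-I(d_1v)$ is then a $C_{Hit}$-cocycle with $\phi(\alpha,\varphi)=(\beta,\varphi)-D''(iv)$, cohomologous to $(\beta,\varphi)$. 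So $\phi_*$ is surjective, and combined with the above it is a complex-linear isomorphism.

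I expect the only genuine content to be the identity $d_2\circ I\circ d_1=d_1^*d_1$, equivalently $\langle d_2(I(d_1v)),v\rangle=\|d_1v\|^2$: this is where the K\"ahler structure and the moment-map property really enter, and where one must watch the Hodge star, the identification $\Omega^2(\g_E)\cong\Omega^0(\g_E)$ and the signs; everything else is formal manipulation with $\phi$. One could instead compare harmonic representatives of the two elliptic complexes directly via the K\"ahler identities $(D'')^*=-i[*,D']$ and $(D')^*=i[*,D'']$, but the cocycle argument seems shorter.
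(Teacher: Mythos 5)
Your argument is correct as far as it goes, and it takes a genuinely different route from the one the paper intends. The paper offers no written proof beyond ``by direct computation'': the intended argument is to check, via the K\"ahler identities $(D'')^*=-i[*,D']$ and $(D')^*=i[*,D'']$, that under $\alpha\mapsto\alpha''$ the harmonic conditions for the two complexes match term by term, i.e.\ that $d_1^*(\alpha,\varphi)=0$ together with $d_2(\alpha,\varphi)=0$ is equivalent to $(D'')^*(\alpha'',\varphi)=0$. You instead work at the level of cocycles: the chain map $\phi$, the identity $\langle d_2(Id_1v),v\rangle_{L^2}=\|d_1v\|_{L^2}^2$ coming from the moment-map property of $\mu$ at a point of $\mu^{-1}(0)$, and the solvability of $d_1^*d_1v=d_2(\alpha_0,\varphi)$ (legitimate, since $d_2(\alpha_0,\varphi)\perp\ker d_1$ because $d\langle\mu,\xi\rangle=\iota_{X_\xi}\omega_I$ vanishes when $X_\xi=d_1\xi=0$, and $d_1^*d_1$ is elliptic and self-adjoint with range $(\ker d_1)^\perp$). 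All of these steps check out, including the identities $\alpha=d_Au+I(d_Av)$ and $\varphi=[\Phi,u]+I[\Phi,v]$ for $D''(u+iv)$. What your route buys is that the only analytic input is the Kempf--Ness identity $d_2\circ I\circ d_1=d_1^*d_1$; what the direct computation buys is a statement about the harmonic subspaces themselves rather than about cohomology classes.

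That last point is the one substantive caveat. Your argument proves that $\phi$ induces an isomorphism $H^1(C_{Hit})\to H^1(C_{\mu_\C})$, and since both complexes are elliptic this abstractly identifies the harmonic spaces. But it does not show that $\phi$ carries the concrete subspace $\bH^1(C_{Hit})=\ker d_1^*\cap\ker(d_2\oplus D'')$ \emph{onto} $\bH^1(C_{\mu_\C})=\ker(D'')^*\cap\ker D''$; a priori the image of a harmonic element is only a cocycle, and the induced map on harmonic spaces would involve a further harmonic projection. The paper uses the on-the-nose identification downstream: in the proof of Theorem~A the tangent space $\ker d\m\cap(\ker d_1^*)^H$ is identified with $(\bH^1)^H\subset\ker(D'')^*$, and the hyperK\"ahler structure is restricted to it through that identification. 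So to make your proof serve the paper's purposes you should still carry out the equivalence $d_1^*(\alpha,\varphi)=0\iff(D'')^*(\alpha'',\varphi)=0$ for $\alpha=\alpha''-(\alpha'')^*$ --- precisely the K\"ahler-identity computation you mention only as an alternative in your closing remark.
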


Now, we review the Kuranishi local models used to construct the moduli
space $\M$ as a normal complex space (for more details, see
\cite{Fan2020}). Fix $(A,\Phi)\in\m^{-1}(0)$ with $\G$-stabilizer $H$
and consider the subspace $\tilde{\B}=(1-H)\mu_\C$, where $H$ is the
harmonic projection from $\Omega^{1,1}(\g_E^\C)$ onto
$\bH^2(C_{\mu_\C})$. Then, locally around $(A,\Phi)$, $\tilde{\B}$ is
a complex submanifold of $\sC$. Moreover, the holomorphic map
\begin{gather*}
  F\colon\Omega^{0,1}(\g_E^\C)\oplus\Omega^{1,0}(\g_E^\C)\to\Omega^{0,1}(\g_E^\C)\oplus\Omega^{1,0}(\g_E^\C)\\
  F(\alpha,\eta)=(\alpha,\eta)+(D'')^*G[\alpha'',\eta]
\end{gather*}
is $H^\C$-equivariant and restricts to
\begin{equation*}
  F\colon\tilde{\B}\cap((A,\Phi)+\ker (D'')^*)\to\bH^1
\end{equation*}
In fact, $F$ maps an open neighborhood of $(A,\Phi)$ in
$\tilde{\B}\cap((A,\Phi)+\ker(D'')^*)$ homeomorphically onto an open
ball (in $L^2$-norm) $B\subset\bH^1$ around $0$. Its inverse, viewed
as a map $\theta\colon B\to\sC$, is called a Kuranishi map. The
hyperK\"ahler structure on $\sC$ restricts to $\bH^1$, and hence
$\bH^1$ has a linear complex symplectic structure $\omega_\C$. Since
$H^\C$ acts linearly on $\bH^1$ and preserves $\omega_\C$, there is a
standard complex moment map $\nu_{0,\C}$ on $\bH^1$ such that
$\nu_{0,\C}(0)=0$. Let $\Z=B\cap\nu_{0,\C}^{-1}(0)$. It is proved that
$\theta$ maps $\Z$ homeomorphically onto an open neighborhood of
$(A,\Phi)$ in $\B^{ss}\cap((A,\Phi)+\ker (D'')^*)$. Moreover, $x\in\Z$
has a closed $K^\C$-orbit in $\bH^1$ if and only if $\theta(x)$ is a
polystable Higgs bundle, provided that $B$ is sufficiently
small. Furthermore, $B$ can be arranged so that $\Z H^\C\sslash H^\C$
is an open neighborhood of $[0]$ in $\nu_{0,\C}^{-1}(0)\sslash H^\C$
and that $\theta$ induces a biholomorphism
$\varphi\colon \Z H^\C\sslash H^\C\to\M$ onto an open neighborhood of
$[A,\Phi]$ in $\M$. We will also call $\varphi$ a Kuranishi map. More
precisely, $\varphi[x]=[r\theta(x)]$ for any $x\in\Z$, where
$r\colon\B^{ss}\to\mu^{-1}(0)$ is the retraction defined by the
Yang-Mills-Higgs flow. The local inverse of $\varphi$ is given by
$[B,\Psi]\mapsto [x]$, where $(B,\Psi)$ can be chosen such that there
are unique $g\in\G^\C$ and $x\in\Z$ such that
$(B,\Psi)=\theta(x)g$. Moreover, $g$ and $x$ depend on $(B,\Psi)$
holomorphically.

\section{Mostow's decomposition}\label{sec:most-decomp}
In this section, we will prove Mostow's decomposition for $\G^\C$,
Theorem~\ref{sec:introduction-mostow}. In fact, we will prove the
following Theorem~\ref{sec:mostow-decomposition-main-theorem}, and
Theorem~\ref{sec:introduction-mostow} follows as a corollary.

Let $H$ be a compact subgroup of $\G$ and $\h$ its Lie algebra. The
compactness of $H$ implies that $\h$ is a finite-dimensional subspace
of $\Omega^0(\g_E)$ and hence closed. Therefore, $\h$ has a
$L^2$-orthogonal complment $\h^\perp$ in $\Omega^0(\g_E)$ so that
$\Omega^0(\g_E)=\h\oplus\h^\perp$. Moreover, let $H^\C$ be the
complexification of $H$.
\begin{theorem}[cf. {\cite[Corollary
    9.5]{heinzner2007cartan}}]\label{sec:mostow-decomposition-main-theorem}
  The map
  \begin{equation*}
    H^\C\times_H(\h^\perp\times\G)\to\G^\C\qquad[h,s,u]\mapsto h\exp(is)u
  \end{equation*}
  is a bijection, where $H$ acts on
  $H^\C\times(\h^\perp\times\G)$ by
  \begin{equation*}
    h_0\cdot(h,s,u)=(hh_0^{-1},h_0sh_0^{-1},h_0u)
  \end{equation*}
\end{theorem}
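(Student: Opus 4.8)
The plan is to realize $\G^\C$ as an infinite-dimensional (weak) Kähler manifold on which the left $H$-action is Hamiltonian, and then to apply the Kempf--Ness / Mundet i Riera framework that identifies closed $H^\C$-orbits with zeros of the associated moment map, exactly as in Heinzner--Schwarz \cite[\S9]{heinzner2007cartan}. Concretely, I would first equip $\G^\C$ (or rather its Sobolev completion $\G^\C_{k+1}$) with the left-invariant weak Kähler structure whose value at the identity is the $L^2$ Hermitian form on the Lie algebra $\Omega^0(\g_E^\C)=\Omega^0(\g_E)\oplus i\,\Omega^0(\g_E)$; the necessary verifications (that this is a weak Kähler metric, that the left $H$-action preserves it, and that it is Hamiltonian) are carried out in the Appendix, following Huebschmann--Leicht \cite{Huebschmann2013}, so I may invoke them. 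The moment map $\Psi\colon\G^\C\to\h^*$ for the left $H$-action will be (up to the canonical identification $\h^*\cong\h$ given by the invariant inner product) of the form $\Psi(g)=\mathrm{pr}_{\h}(\text{something built from the polar part of }g)$, arranged so that $\Psi^{-1}(0)$ is precisely the submanifold $\h^\perp\times\G=\{\exp(is)u : s\in\h^\perp,\ u\in\G\}$. That the Cartan decomposition $\G^\C=\exp(i\,\Omega^0(\g_E))\cdot\G$ holds — so that every element has a unique polar form $g=\exp(i\xi)u$ with $\xi\in\Omega^0(\g_E)$, $u\in\G$ — is the infinite-dimensional Cartan decomposition; this is standard for gauge groups and I would cite it (or re-derive it from the finite-dimensional pointwise statement together with elliptic regularity).

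With that setup in place, the theorem reduces to two assertions about the left $H^\C$-action on $\G^\C$: (1) every $H^\C$-orbit meets $\Psi^{-1}(0)=\h^\perp\times\G$; and (2) the intersection of an $H^\C$-orbit with $\Psi^{-1}(0)$ is a single left $H$-orbit, where $H$ acts by the formula $h_0\cdot(h,s,u)=(hh_0^{-1},h_0 s h_0^{-1},h_0 u)$. Assertion (1) is the existence part: given $g\in\G^\C$, one wants $h\in H^\C$ with $hg\in\Psi^{-1}(0)$; following Mundet i Riera \cite{Riera2000}, this is obtained by minimizing the Kempf--Ness-type functional $t\mapsto \|\Psi(\exp(it\xi)g)\|$ along geodesics in $H^\C$, using that $H$ is \emph{compact} (hence $H^\C$ is genuinely finite-dimensional reductive) so that the relevant convexity and properness arguments go through; the critical point condition is exactly $\Psi=0$. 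Assertion (2) is the uniqueness: if $hg$ and $g$ both lie in $\Psi^{-1}(0)$ for some $h\in H^\C$, the geodesic convexity of the norm-squared moment map forces $h$ to normalize the relevant stabilizer data, and a short computation with the polar decomposition shows $h\in H$ and that the two triples differ by the displayed $H$-action. Here I would lean on the fact that the $H^\C$-action on $\G^\C$ is free (left multiplication), which simplifies the stabilizer bookkeeping considerably compared to the general Heinzner--Schwarz setting.

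The main obstacle I anticipate is \emph{not} the moment-map formalism itself — that is essentially bookkeeping, deferred to the Appendix — but rather the analytic point that the minimization/convexity arguments of Mundet i Riera and Heinzner--Schwarz, which are stated for finite-dimensional Kähler manifolds, survive the passage to the Banach manifold $\G^\C_{k+1}$. The saving grace is that the \emph{group} acting holomorphically in the relevant direction is $H^\C$, which is finite-dimensional because $H$ is compact; so the geodesics $t\mapsto\exp(it\xi)g$ along which one tests convexity lie in finite-dimensional submanifolds $H^\C g$, and the function being minimized is a smooth convex function on a finite-dimensional space. Thus the only genuinely infinite-dimensional input needed is that $\Psi$ is well-defined and smooth on $\G^\C_{k+1}$ (guaranteed by the Sobolev multiplication theorem and elliptic estimates) and that $\Psi^{-1}(0)$ is exactly $\h^\perp\times\G$ with the asserted product structure. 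Once existence and uniqueness of the zero in each $H^\C$-orbit are established, bijectivity of $[h,s,u]\mapsto h\exp(is)u$ is immediate: surjectivity is assertion (1) combined with the polar decomposition, and injectivity is assertion (2) combined with uniqueness of the polar form. Finally, Theorem~\ref{sec:introduction-mostow} follows by quotienting both sides on the right by $H$ and on the left by $H^\C$, i.e.\ by passing from $H^\C\times_H(\h^\perp\times\G)\to\G^\C$ to the induced map $\h^\perp\times_H\G\to\G^\C/H^\C$.
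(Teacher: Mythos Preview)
Your proposal is correct and follows essentially the same route as the paper: the weak K\"ahler structure on $\G^\C$ with moment map $\kappa(\exp(is)u)=Ps$ (so $\kappa^{-1}(0)=\h^\perp\times\G$) is set up in the Appendix via Huebschmann--Leicht, and the two assertions you isolate are exactly the content of the paper's Lemma~\ref{sec:mostow-decomposition-lemma}, proved there by verifying the hypothesis of \cite[Lemma~5.2 and Theorem~5.4]{Riera2000} (namely $\lim_{t\to\infty}(\kappa(\exp(its)g),s)_{L^2}>0$) using strict monotonicity from freeness of the $H^\C$-action and a limit computation from \cite{trautwein2015survey}. Your emphasis that the convexity/properness arguments live on the finite-dimensional orbits $H^\C g$ is precisely the point the paper exploits when invoking Riera's results.
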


To prove Theorem~\ref{sec:mostow-decomposition-main-theorem}, we adapt
the proof of \cite[Corollary 9.5]{heinzner2007cartan}. Recall that the
polar decomposition $\mathfrak{u}(n)\times U(n)\to GL_n(\C)$ induces a
polar decomposition
\begin{equation*}
  \Omega^0(\g_E)\times\G\to\G^\C\qquad (s,u)\mapsto\exp(is)u
\end{equation*}
Via the polar decomposition, the left multiplication of $H^\C$ on
$\G^\C$ induces a left $H^\C$-action on $\Omega^0(\g_E)\times\G$. In
particular, $H$ acts on $\Omega^0(\g_E)\times\G$ by
$h_0\cdot(s,u)=(h_0sh_0^{-1},h_0u)$. In the \nameref{sec:appendix}, we
will show that both $\Omega^0(\g_E)\times\G$ and $\G^\C$ are weak
K\"ahler manifolds such that the polar decomposition is an isomorphism
of K\"ahler manifolds. Moreover, the left $H$-action is Hamiltonian
with a moment map given by
\begin{equation*}
  \kappa\colon\Omega^0(\g_E)\times\G\to\h\qquad(s,u)\mapsto Ps
\end{equation*}
where $P\colon\Omega^0(\g_E)\to\h$ is the projection. Now, we
routinely identify $\Omega^0(\g_E)\times\G$ with $\G^\C$ using the
polar decomposition. Then,
Theorem~\ref{sec:mostow-decomposition-main-theorem} follows from the
following.
\begin{lemma}\ \label{sec:mostow-decomposition-lemma}
  \begin{enumerate}
  \item Every $H^\C$-orbit in $\G^\C$ intersect $\kappa^{-1}(0)$.
    
  \item $\kappa^{-1}(0)\cap H^\C g=Hg$ for every $g\in\G^\C$.
  \end{enumerate}
\end{lemma}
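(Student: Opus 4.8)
The plan is to treat the two parts of Lemma~\ref{sec:mostow-decomposition-lemma} using the Hamiltonian framework set up in the appendix, following Mundet i Riera's approach to the Kempf–Ness problem in infinite dimensions. Recall that under the polar-decomposition identification, $\G^\C\cong\Omega^0(\g_E)\times\G$ carries a weak K\"ahler structure, the left $H$-action is Hamiltonian with moment map $\kappa(s,u)=Ps$, and moreover the left $H^\C$-action is defined by transport of structure. The key analytic input will be the properness, along each $H^\C$-orbit, of the associated \emph{Kempf--Ness functional} (or integral of the moment map along geodesics). Concretely, fix $g=\exp(is_0)u_0\in\G^\C$ and consider the orbit map $H^\C\to\G^\C$, $h\mapsto hg$. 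Composing with $\kappa$ and integrating gives a function on $H^\C$ (equivalently, by $H$-invariance of the relevant part, a convex function on the symmetric space $H^\C/H\cong i\h$); its gradient flow is the flow of the moment map vector field, and its critical points are exactly the points of $H^\C g\cap\kappa^{-1}(0)$.

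For part (1), I would show this functional is proper and bounded below on $H^\C/H$, so it attains a minimum, and the minimum is a critical point, hence lies in $\kappa^{-1}(0)$. Properness here is where the compactness of $H$ is essential and where I expect the \textbf{main obstacle} to lie: $\h$ is finite-dimensional, so $H^\C/H$ is a finite-dimensional symmetric space, and one must control the behavior of $\kappa$ restricted to $H^\C g$ at infinity. The cleanest route is to verify the hypotheses of Mundet i Riera's abstract criterion in \cite{Riera2000}: that the moment map along each geodesic ray in $H^\C/H$ is monotone (convexity, which follows from the Hamiltonian structure) and that a suitable ``numerical function'' at infinity is positive. Since $H$ is a \emph{fixed} compact group and the $H$-action on $\G^\C$ is by left translation composed with conjugation, the linearized action at a point factors through the adjoint action of $H$ on $i\h$, which is the standard picture where compactness gives the needed estimate directly; I would reduce to that linear model via the explicit form $\kappa(s,u)=Ps$ and the formula for the $H^\C$-action on $(s,u)$.

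For part (2), the inclusion $Hg\subseteq\kappa^{-1}(0)\cap H^\C g$ is immediate since $\kappa$ is $H$-invariant and $\kappa(0,u_0)=0$, so it suffices to show $\kappa^{-1}(0)\cap H^\C g$ is a single $H$-orbit. This is the uniqueness-of-critical-points statement: because the Kempf--Ness functional on $H^\C/H$ is convex along geodesics and strictly convex transverse to the $H$-directions, any two critical points are joined by a geodesic along which the functional is both convex and has vanishing derivative at both ends, forcing it to be constant, and strict convexity then forces the geodesic to be ``vertical,'' i.e. the two points differ by an element of $H$. The strict convexity is again a consequence of the weak K\"ahler structure being genuinely K\"ahler (nondegenerate metric) on $\G^\C$, established in the appendix. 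Once Lemma~\ref{sec:mostow-decomposition-lemma} is in hand, Theorem~\ref{sec:mostow-decomposition-main-theorem} follows formally: part (1) gives surjectivity of $[h,s,u]\mapsto h\exp(is)u$ onto $\G^\C$ after identifying $\kappa^{-1}(0)$ with $\h^\perp\times\G$, and part (2) gives injectivity modulo the stated $H$-action; Theorem~\ref{sec:introduction-mostow} is then the quotient statement obtained by dividing both sides by the right $\G$-action and rewriting $H^\C\times_H(\h^\perp\times\G)$ as $\h^\perp\times_H\G$ fibered over $H^\C/H$.
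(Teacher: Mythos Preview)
Your approach is essentially the same as the paper's: both invoke Mundet i Riera's framework \cite{Riera2000} for the Kempf--Ness problem, using convexity along one-parameter subgroups and positivity of the asymptotic weight to get existence and uniqueness of zeros of $\kappa$ on each $H^\C$-orbit. The paper, however, is more concrete at exactly the point you flag as the main obstacle. Rather than arguing via properness of the Kempf--Ness functional, it cites \cite[Lemma 5.2 and Theorem 5.4]{Riera2000} directly and reduces both (1) and (2) to the single estimate $\lim_{t\to\infty}(\kappa(\exp(its)g),s)_{L^2}>0$ for all nonzero $s\in\h$. Writing $\exp(its)g=\exp(i\eta(t))u(t)$ via polar decomposition, this becomes $\lim_{t\to\infty}(\eta(t),s)_{L^2}>0$. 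Freeness of the $H^\C$-action gives strict monotonicity of $t\mapsto(\eta(t),s)_{L^2}$ via \cite[Lemma 2.2]{Riera2000}, and the paper then invokes the asymptotic result $\eta(t)/\|\eta(t)\|_{L^2}\to s/\|s\|_{L^2}$ from \cite[Theorem 5.12]{trautwein2015survey} to conclude positivity for $t\gg0$. Your ``reduce to the linear model'' is the right instinct but stops short of this computation; the Trautwein asymptotic is the missing concrete ingredient. A minor slip: in your inclusion argument for (2) you write $\kappa(0,u_0)=0$, but $g=(s_0,u_0)$ with $s_0$ not a priori zero; the inclusion $Hg\subset\kappa^{-1}(0)$ only holds once $g\in\kappa^{-1}(0)$, which is what (1) lets you assume.
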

\begin{proof}
  Since $H$ is a compact Lie group (hence
  finite-dimensional), \cite[Lemma 5.2 and Theorem 5.4]{Riera2000}
  apply. Therefore, it suffices to show that
  \begin{equation*}
    \lim_{t\to\infty}(\kappa(\exp(its)g),s)_{L^2}>0
  \end{equation*}
  for any $s\in\h$ and $g\in\G^\C$. Using the polar
  decomposition, we may write
  \begin{equation*}
    \exp(its)g=\exp(it\eta(t))u(t)
  \end{equation*}
  for some $\eta(t)\in\Omega^0(\g_E)$ and $u(t)\in\G$. Hence,
  \begin{equation*}
    (\kappa(\exp(its)g),s)_{L^2}=(P\eta(t),s)_{L^2}=(\eta(t),s)_{L^2}
  \end{equation*}
  Since $H^\C$ acts on $\G^\C$ freely, by \cite[Lemma 2.2]{Riera2000},
  $(\eta(t),s)_{L^2}$ is a strictly increasing function of
  $t$. Therefore, it suffices to prove that if $t\gg0$,
  $(\eta(t),s)_{L^2}\geq0$. Hence, we may assume that
  $\eta(t)\neq0$ for any $t$. By the proof of \cite[Theorem
  5.12]{trautwein2015survey}, we see that
  \begin{equation*}
    \lim_{t\to\infty}\frac{\eta(t)}{\|\eta(t)\|_{L^2}}=\frac{s}{\|s\|_{L^2}}
  \end{equation*}
  in $L^2$-norm so that
  \begin{equation*}
    \lim_{t\to\infty}\biggl(\frac{\eta(t)}{\|\eta(t)\|_{L^2}},\frac{s}{\|s\|_{L^2}}\biggr)_{L^2}=1
  \end{equation*}
  Therefore, if $t\gg0$, $(\eta(t),s)_{L^2}>0$.
\end{proof}
\begin{proof}[Proof of Theorem~\ref{sec:mostow-decomposition-main-theorem}]
  Consider the map
  \begin{equation*}
    H^\C\times_H\kappa^{-1}(0)\to\G^\C\qquad [h,s,u]\mapsto h\exp(is)u
  \end{equation*}
  The surjectivity and the injectivity follow from $(1)$ and $(2)$ in
  Lemma~\ref{sec:mostow-decomposition-lemma}, respectively. Moreover, $\kappa^{-1}(0)=\h^\perp\times\G$.
\end{proof}

As a corollary of Mostow's decomposition,
Theorem~\ref{sec:introduction-mostow}, we obtain the following that
will be used often in this paper.
\begin{corollary}\label{sec:most-decomp-1-conjugate}
  Let $H$ and $K$ be compact subgroups of $\G$. Then, $H^\C$ and
  $K^\C$ are conjugate in $\G^\C$ if and only if $H$ and $K$ are
  conjugate in $\G$.
\end{corollary}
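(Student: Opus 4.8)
The plan is to deduce Corollary~\ref{sec:most-decomp-1-conjugate} from Mostow's decomposition (Theorem~\ref{sec:introduction-mostow}). One direction is immediate: if $H$ and $K$ are conjugate in $\G$, say $K = gHg^{-1}$ for some $g \in \G \subset \G^\C$, then $K^\C = (gHg^{-1})^\C = gH^\C g^{-1}$, because complexification is functorial and $g$ already lies in $\G^\C$. So the content is the forward direction: suppose $g \in \G^\C$ satisfies $K^\C = gH^\C g^{-1}$; we must produce an element of $\G$ conjugating $H$ to $K$.

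First I would apply Mostow's decomposition to write $g = k^\C \exp(is) u$ with $k^\C \in K^\C$, $s \in \h^\perp$ (here the relevant $\perp$ is the $L^2$-complement of $\fk = \Lie K$, or of $\h$ — I would set it up using whichever group is convenient, say decompose relative to $K$), and $u \in \G$. Since $k^\C \in K^\C$ and we are conjugating $H^\C$ into $K^\C$, the factor $k^\C$ can be absorbed: $gH^\C g^{-1} = K^\C$ is equivalent to $(\exp(is)u)H^\C(\exp(is)u)^{-1} = K^\C$, i.e. after replacing $g$ by $\exp(is)u$ we may assume $g = \exp(is)u$ with $s \in \fk^\perp$. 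Conjugating further by $u^{-1} \in \G$ reduces to the case $g = \exp(is)$, at the cost of replacing $H$ by the conjugate compact subgroup $uHu^{-1}$ (still a compact subgroup of $\G$, which is harmless since we only need conjugacy in $\G$). So the crux becomes: if $\exp(is) H^\C \exp(-is) = K^\C$ with $s \in \fk^\perp$, then $s = 0$ (whence $H^\C = K^\C$, and then $H = K$ since $H = H^\C \cap \G = K^\C \cap \G = K$, using that a compact subgroup is recovered as the intersection of its complexification with $\G$).

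To show $s = 0$, the key point is that $\exp(is)H^\C\exp(-is) = K^\C$ forces $\exp(2is) = \exp(is)(\exp(-is))$ — more precisely, applying the Cartan-type uniqueness of polar decomposition: for $h \in H^\C$, write $h = \exp(i a_h) v_h$ in polar form with $a_h \in \h$, $v_h \in H$; then $\exp(is) h \exp(-is) \in K^\C$ has its polar parts governed by $K$, and iterating the conjugation relation (or using that $\exp(2is)$ normalizes $H^\C$, hence its one-parameter group $t \mapsto \exp(2ist)$ does too by a connectedness/rigidity argument) one finds $\exp(2is)$ normalizes $H^\C$. Since $s \in \fk^\perp$ and $\exp(ist)$ normalizes $H^\C = K^\C$ for all $t$, differentiating gives $[\,is, \fk^\C\,] \subseteq \fk^\C$, i.e. $s$ lies in the normalizer of $\fk$ inside $\Omega^0(\g_E)$; combined with $s \perp \fk$ and the fact that for a compact subgroup the normalizer of its Lie algebra meets $\fk^\perp$ trivially at the infinitesimal level (this is where compactness/reductivity of $H$ is used, via the standard fact that $\mathrm{ad}(is)$ acting on the $H$-module $\Omega^0(\g_E)$ is semisimple and $s$ would generate a torus normalizing $H^\C$ but transverse to it, forcing $s=0$), we conclude $s = 0$. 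I expect this last step — pinning down that $s \in \fk^\perp$ together with $\exp(is)$ normalizing $K^\C$ implies $s = 0$ — to be the main obstacle, since it requires care in the infinite-dimensional setting; the safe route is to invoke the finite-dimensional Mostow theorem applied to the (finite-dimensional) reductive group generated by $H^\C$ together with the torus $\exp(i\R s)$, or equivalently to note that $\exp(is)$ lies in a finite-dimensional complex reductive subgroup of $\G^\C$ containing $H^\C$ and apply \cite[Corollary 9.5]{heinzner2007cartan} there.
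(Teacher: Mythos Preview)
Your reduction is sound: after applying Mostow's decomposition for $K$ and absorbing the $K^\C$-factor and the $\G$-factor, you may assume $\exp(is)\,H'^\C\,\exp(-is)=K^\C$ with $s\in\fk^\perp$ and $H'=uHu^{-1}$. The gap is in the endgame. The goal ``$s=0$'' is simply false in general: take $H=K=\{e\}$, so that $H'^\C=K^\C=\{e\}$ and every $s\in\fk^\perp=\Omega^0(\g_E)$ satisfies the hypothesis. What you actually need is $H'=K$, not $s=0$. Your proposed route to $s=0$ also breaks earlier: from $\exp(is)H'^\C\exp(-is)=K^\C$ alone there is no reason for $\exp(2is)$ to normalise $H'^\C$ (that would require $K^\C=H'^\C$, which is the conclusion), so the ``iterating the conjugation relation / rigidity'' step is circular. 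The finite-dimensional fallback is not available either, since there is no a priori finite-dimensional reductive subgroup of $\G^\C$ containing both $H'^\C$ and the one-parameter family $\exp(i\R s)$.

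The paper's argument (which is Sjamaar's, \cite[Theorem~2.10]{sjamaar1995holomorphic}) bypasses all of this by using only that the Mostow map of Theorem~\ref{sec:introduction-mostow} is a $\G$-equivariant \emph{bijection}. Concretely, in your notation: for each $h'\in H'$ one has $K^\C\exp(is)h'=K^\C\exp(is)$, and both $\exp(is)h'$ and $\exp(is)$ lie in $\kappa^{-1}(0)=\fk^\perp\times\G$ (the polar decomposition of $\exp(is)h'$ is visibly $(\,s,\,h'\,)$ with $s\in\fk^\perp$). Uniqueness in Mostow then gives $\exp(is)h'=k\exp(is)$ for some $k\in K$. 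Taking $(\cdot)^*(\cdot)$ of both sides shows $h'$ commutes with $\exp(2is)$, hence with $s$; therefore $\exp(is)H'\exp(-is)=H'$, while the displayed identity gives $\exp(is)H'\exp(-is)\subset K$. A dimension-and-component count (both groups are compact with the same complexification $K^\C$) yields $H'=K$, hence $uHu^{-1}=K$ with $u\in\G$. No claim about $s$ vanishing is needed or true.
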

\begin{proof}
  This follows from Mostow's decomposition
  (Theorem~\ref{sec:introduction-mostow}) and the first paragraph in
  the proof of \cite[Theorem 2.10]{sjamaar1995holomorphic}. Note that
  all we need is the fact that the map in
  Theorem~\ref{sec:introduction-mostow} is a $\G$-equivariant
  bijection.
\end{proof}

\section{The orbit type
  decompositions}\label{sec:orbit-type-decomp}

\subsection{Orbit types in the hyperK\"ahler quotient}

In this section, we will prove
Theorem~\ref{sec:introduction-pieces-hyperkahler}. 

\begin{proof}[Proof of
  Theorem~\ref{sec:introduction-pieces-hyperkahler}]
  Fix $[A,\Phi]\in Q$ such that $(A,\Phi)$ is of class
  $C^\infty$. Hence, gauge transformations in its $\G$-stabilizer $H$
  are of class $C^\infty$. By definition, $Q$ is a component of
  $\m^{-1}(0)_{(H)}/\G$. Since the $\G$-action is proper, a standard
  argument (e.g. \cite[Proposition 4.4.5]{friedman1998gauge}) shows
  that there is an $H$-invariant open neighborhood $S$ of $(A,\Phi)$
  in $(A,\Phi)+\ker d_1^*$ such that the natural map
  $f\colon S\times_H\G\to\sC$ is a $\G$-equivariant diffeomorphism
  onto an open neighborhood of $(A,\Phi)$, where $d_1$ is defined in
  the complex $C_{Hit}$ (see
  Section~\ref{sec:preliminaries}). Therefore, the restriction
  \begin{equation*}
    f\colon ((\m^{-1}(0)\cap S)\times_H\G)_{(H)}\to\m^{-1}(0)_{(H)}
  \end{equation*}
  is a $\G$-equivariant homeomorphism onto an open neighborhood of
  $(A,\Phi)$ in $\m^{-1}(0)_{(H)}$. Since $Q$ is open in
  $\m^{-1}(0)_{(H)}/\G$, $\pi^{-1}(Q)$ is open in $\m^{-1}(0)_{(H)}$
  and contains $ (A,\Phi)$. By shrinking $S$, we may further assume
  that $f$ takes values in $\pi^{-1}(Q)$. We claim that
  \begin{equation*}
    ((\m^{-1}(0)\cap S)\times_H\G)_{(H)}=(\m^{-1}(0)\cap
    S^H)\times_H\G=(\m^{-1}(0)\cap S^H)\times(\G/H)
  \end{equation*}
  where $S^H$ consists of elements in $S$ that are fixed by $H$. The
  second equality is obvious. To show the first one, let $[B,\Psi,g]$
  be a point in $(\m^{-1}(0)\cap S)\times_H\G$ with $\G$-stabilizer
  conjugate to $H$ in $\G$. As a consequence,
  \begin{equation*}
    \G_{[B,\Psi,1]}=g\G_{[B,\Psi,g]}g^{-1}\in(H)
  \end{equation*}
  Since $S$ is a local slice for the $\G$-action on $\sC$,
  $\G_{[B,\Psi,1]}\subset H$. Since $\G_{[B,\Psi,1]}$ and $H$ have the
  same dimension and the same number of components, we see that
  $\G_{[B,\Psi,1]}=H$. Hence, $H$ fixes $(B,\Psi)$, and the claim
  follows. Therefore, the map $\pi^{-1}(Q)\to Q$ can be locally
  identified with the projection
  \begin{equation*}
    (\m^{-1}(0)\cap
    S^H)\times(\G/H)\to\m^{-1}(0)\cap S^H.
  \end{equation*}
  Moreover, since $H$ is compact, the quotient map
  $\m^{-1}(0)\cap S\to(\m^{-1}(0)\cap S)/H$ is closed. Since $S^H$ is
  closed in $S$, we conclude that $(\m^{-1}(0)\cap S^H)/H$ is closed
  in $(\m^{-1}(0)\cap S)/H$. Since
  $\m^{-1}(0)\cap S^H=(\m^{-1}(0)\cap S^H)/H$ is homeomorphic to an
  open neighborhood of $[A,\Phi]$ in $Q$, $Q$ is a locally closed
  subset of $\m^{-1}(0)/\G$. Then, we prove that $\m^{-1}(0)\cap S^H$
  is a submanifold of $S^H$. As a consequence, $\pi^{-1}(Q)$ is a
  submanifold of $\sC$, $Q$ is a smooth manifold, and
  $\pi\colon\pi^{-1}(Q)\to Q$ is a smooth submersion. 

  To show that $\m^{-1}(0)\cap S^H$ is a submanifold of $S^H$, we adapt
  the proof of \cite[Theorem 2.24]{stecker2015moduli}. Let $\mu_i$ be
  a component of the hyperK\"ahler moment map $\m$. We first show that
  the restriction $\mu_i|_{S^H}$ has a constant finite corank so that
  $S^H\cap\mu_i^{-1}(0)$ is a submanifold of $S^H$. After that, we
  show that $S^H\cap\m^{-1}(0)=\cap_{i=1}^3S^H\cap\mu_i^{-1}(0)$ is a
  submanifold of $S^H$. Fix $(B,\Psi)\in S^H$. Note that
  $T_{(B,\Psi)}S^H=(\ker d_1^*)^H$. Consider the sequence
  \begin{equation*}
    \Omega^0(\g_E)\xrightarrow{d_1}T_{(B,\Psi)}\sC\xrightarrow{d\mu_i}\Omega^2(\g_E)
  \end{equation*}
  where $d\mu_i$ is the derivative of $\mu_i$ at $(B,\Psi)$. Note that
  $H$ acts on each term by conjugation, and both $d_1$ and $d\mu_i$
  are $H$-equivariant. Since the complex $C_{Hit}$ is elliptic, the
  symbol of $d\mu_i$ is surjective so that the Hodge decomposition
  \begin{equation*}
    \Omega^2(\g_E)=\ker (d\mu_i)^*\oplus\im d\mu_i
  \end{equation*}
  holds, where $(d\mu_i)^*$ is the $L^2$-formal adjoint of $d\mu_i$. We
  claim that
  \begin{equation*}
    d\mu_i\Bigl((T_{(B,\Psi)}\sC)^H\Bigr)=(\im d\mu_i)^H
  \end{equation*}
  Since $d\mu_i$ is $H$-equivariant, the inclusion ``$\subset$'' is
  obvious. Conversely, suppose $y=d\mu_i(x)$ is fixed by $H$ for some
  $x\in T_{(B,\Psi)}\sC$. Since $H$ is compact, $\int_H(x\cdot h)dh$
  is well-defined and fixed by $H$. Therefore, the inclusion
  ``$\supset$'' follows from
  \begin{equation*}
    y=\int_H(y\cdot h)dh=\int_Hd\mu_i(x\cdot
    h)dh=d\mu_i\biggl(\int_H(x\cdot h)dh\biggr)
  \end{equation*}
  Then, the $H$-equivariance of $d\mu_i$ implies that
  \begin{equation*}
    \Omega^2(\g_E)^H=(\ker (d\mu_i)^*)^H\oplus(\im d\mu_i)^H=(\ker
    (d\mu_i)^*)^H\oplus d\mu_i\Bigl((T_{(B,\Psi)}\sC)^H\Bigr)
  \end{equation*}
  Moreover, the formula $(d\mu_i)^*=-I_id_1*$ implies that
  \begin{equation*}
    \dim\ker (d\mu_i)^*=\dim\ker d_1=\dim\G_{(B,\Psi)}
  \end{equation*}
  Since $S$ is a local slice for the $\G$-action on $\sC$,
  $\G_{(B,\Psi)}=H$. Finally, since
  \begin{equation*}
    (T_{(B,\Psi)}\sC)^H=(\im d_1)^H\oplus(\ker d_1^*)^H=(\im d_1)^H\oplus(T_{(B,\Psi)}S^H)
  \end{equation*}
  and $d\mu_id_1=0$, we conclude that
  $\mu_i|_{S^H}\colon S^H\to\Omega^2(\g_E)^H$ has a constant finite
  corank so that $\mu_i^{-1}(0)\cap S^H$ is a smooth submanifold of
  $S^H$.

  Now, we show that $\cap_{i=1}^3S^H\cap\mu_i^{-1}(0)$ is a
  submanifold of $S^H$. Let $\Delta\colon S^H\to(S^H)^3$ be the
  diagonal map. If we can show that $\Delta$ is transversal to
  \begin{equation*}
    W=(\mu_1^{-1}(0)\cap S^H)\times (\mu_2^{-1}(0)\cap S^H)\times
    (\mu_3^{-1}(0)\cap S^H)\subset(S^H)^3
  \end{equation*}
  then $\Delta^{-1}(W)=\cap_{i=1}^3S^H\cap\mu^{-1}(0)$ is a smooth
  submanifold of $S^H$. So, we fix $(B,\Psi)\in\Delta^{-1}(W)$. Then,
  we have
  \begin{equation*}
    T_{\Delta(B,\Psi)}W=\bigoplus_{i=1}^3\Bigl(\ker d\mu_i\cap(\ker d_1^*)^H\Bigr)
  \end{equation*}
  and
  \begin{equation*}
    \Delta_*T_{(B,\Psi)}S^H=\{(v,v,v)\colon v\in(\ker d_1^*)^H\}
  \end{equation*}
  Note that $\ker d\mu_i=(I_i\im d_1)^\perp$. Therefore, if
  $u_i\in(\ker d_1^*)^H$ $(i=1,2,3)$, then we may write
  $u_i=u_i'+u_i''$ for $u_i'\in I_i\im d_1$ and
  $u_i''\in(I_i\im d_1)^\perp$. Since $\mu_i(B,\Psi)=0$, it is not
  hard to check that $u_i''\in(\ker d_1^*)^H$. Moreover, since
  $I_i\im d_1$ are orthogonal to each other, we may further write
  \begin{equation*}
    u_i=(u_i'-\sum_{j\neq i}u_j'')+u_1''+u_2''+u_3''
  \end{equation*}
  Therefore, $T_{\Delta(B,\Psi)}W$ and $\Delta_*T_{(B,\Psi)}S^H$
  generate $(T_{(B,\Psi)}S^H)^3$, and we are done.

  By construction, we see that $\m^{-1}(0)\cap S^H$ is a smooth
  manifold with tangent space
  \begin{equation*}
    \ker d\m\cap(\ker d_1^*)^H=(\ker d_2\cap\ker d_1^*)^H=\bH^1(C_{Hit})^H=(\bH^1)^H
  \end{equation*}
  at $[A,\Phi]$. Since the $L^2$-metric on $\sC$ is preserved by the
  $\G$-action, it descends to a metric on $Q$. By the proof of
  \cite[Theorem 6.7]{Hitchin1987b}, we see that $I$, $J$ and $K$
  restrict to $\bH^1$. Since they are preserved by the $H$-action on
  $\bH^1$, they further restrict to $(\bH^1)^H$. Moreover, since they
  are preserved by the $\G$-action, they, together with the
  $L^2$-metric on $(\bH^1)^H$, define an almost hyperK\"ahler structure
  on $Q$.

  Let $\Omega_I$, $\Omega_J$ and $\Omega_K$ be the K\"ahler forms on
  $\sC$ associated with complex structures $I$, $J$ and $K$,
  respectively. By the proof of \cite[Theorem 6.7]{Hitchin1987b}, we
  see that if $v\in\im d_1\oplus\bH^1$ then $\Omega_i(v,\xi)=0$ for
  any $i\in\{I,J,K\}$ and any vector $\xi$ tangent to the
  $\G$-orbit. This also holds for
  $v\in\im d_1\oplus(\bH^1)^H=T_{(A,\Phi)}\pi^{-1}(Q)$. Therefore,
  there are unique K\"ahler forms $\omega_i$, $i\in\{I,J,K\}$, on $Q$
  such that $\pi^*\omega_i=\Omega_i|_{\pi^{-1}(Q)}$ for
  $i\in\{I,J,K\}$. Therefore, each $\omega_i$ is closed. Then, the
  integrability of complex structures on $Q$ follows from \cite[Lemma
  6.8]{Hitchin1987b}.

  Finally, by the elliptic regularity, it is easy to see that elements
  in $\ker d_1^*\cap\m^{-1}(0)$ are of class $C^\infty$. Since
  $S^H\cap\m^{-1}(0)\subset\ker d_1^*\cap\m^{-1}(0)$, our heuristic
  use of infinite-dimensional manifolds can be justified by working
  with Sobolev completions.
\end{proof}

\subsection{A local slice theorem}\label{sec:local-slice-theorem}
Now we study the strata in the orbit type decomposition of
$\M$. Therefore, we need a local slice theorem for the $\G^\C$-action
on $\sC$.

\begin{theorem}\label{sec:local-slice-theorem-local-slice-thm}
  Let $(A,\Phi)$ be a Higgs bundle in $\m^{-1}(0)$ with
  $\G$-stabilizer $H$. Then, there exists an open neighborhood $O$ of
  $(A,\Phi)$ in $(A,\Phi)+\ker (D'')^*$ such that the natural map
  $OH^\C\times_{H^\C}\G^\C\to\sC$ is a biholomorphism onto an open
  neighborhood of $(A,\Phi)$.
\end{theorem}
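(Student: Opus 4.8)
The plan is to build the local slice for the $\G^\C$-action from the local slice for the $\G$-action together with Mostow's decomposition (Theorem~\ref{sec:introduction-mostow}), following the strategy of Buchdahl and Schumacher in \cite[Proposition 4.5]{buchdahl2020polystability}. Since $(A,\Phi)\in\m^{-1}(0)$, the proof of Theorem~\ref{sec:introduction-pieces-hyperkahler} already produces a $\G$-slice: there is an $H$-invariant open neighborhood $S$ of $(A,\Phi)$ in $(A,\Phi)+\ker d_1^*$ such that $S\times_H\G\to\sC$ is a $\G$-equivariant diffeomorphism onto an open neighborhood of $(A,\Phi)$. Working at the level of the Sobolev completions $\sC_k$, $\G_{k+1}$, $\G^\C_{k+1}$ (as announced in the introduction), the first step is to relate $\ker d_1^*$ to $\ker (D'')^*$. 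Using the K\"ahler identities $(D'')^*=-i[*,D']$ from Section~\ref{sec:preliminaries} and the decomposition $\Omega^1(\g_E)\cong\Omega^{0,1}(\g_E^\C)$, one checks that the map $\alpha\mapsto\alpha''$ carries $\ker d_1^*$ isomorphically onto $\ker (D'')^*$ (this is essentially the $\bH^1$-comparison of the two deformation complexes, extended off the harmonic space), and that this identification is $H$-equivariant and compatible with the complex structure $I$. Thus I may take $O\subset (A,\Phi)+\ker (D'')^*$ to be the image of $S$ under this identification, shrunk so that it is $H$-invariant and open.

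Next I would assemble the complexified slice. Consider the natural map
\begin{equation*}
  \Psi\colon O\times_{H^\C}\G^\C\to\sC,\qquad [o,g]\mapsto o\cdot g,
\end{equation*}
where here $O\times_{H^\C}\G^\C$ should be read as $(OH^\C)\times_{H^\C}\G^\C$, i.e.\ we first enlarge $O$ to its $H^\C$-saturation. The surjectivity onto a neighborhood of $(A,\Phi)$ and the injectivity are exactly where Mostow's decomposition enters: using $\G^\C=H^\C\exp(i\h^\perp)\G$ (Theorem~\ref{sec:introduction-mostow}), any $g$ near $1$ in $\G^\C$ factors as $g=h^\C\exp(is)u$ with $s\in\h^\perp$ small and $u\in\G$ near $1$, so $o\cdot g$ and $o'\cdot u'$ with $o'\in O$, $u'\in\G$ cover the same points, and one reduces both surjectivity and injectivity of $\Psi$ to the corresponding properties of the real slice $S\times_H\G\to\sC$ already established. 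The point is that an $H^\C$-orbit through a point of $O$ meets $O$ in exactly one $H^\C$-orbit, which follows because $\exp(i\h^\perp)$-translates move transversally to $\ker(D'')^*$ — here one uses that $H$ is the $\G$-stabilizer, hence $H^\C$ is the $\G^\C$-stabilizer at $(A,\Phi)$ (this regularity/stabilizer identification is part of the Kuranishi picture recalled in Section~\ref{sec:preliminaries}), so $i\h^\perp$ is complementary to the tangent to $\ker(D'')^*$ inside a $\G^\C$-orbit direction.

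Finally I would check that $\Psi$ is a biholomorphism, not merely a bijection. It suffices to show $\Psi$ is a local biholomorphism at each point of the zero section, and then combine with the bijectivity just proved. At $[(A,\Phi),1]$ the differential of $\Psi$ is the sum of the inclusion $\ker(D'')^*\hookrightarrow T_{(A,\Phi)}\sC$ and the infinitesimal action map $\fk^\C/\h^\C\to T_{(A,\Phi)}\sC$, $\xi\mapsto D''\xi$; since $\im D''$ (on the complement of $\h^\C$) is precisely the $L^2$-orthogonal complement of $\ker(D'')^*$ by the Hodge decomposition for the elliptic complex $C_{\mu_\C}$, this differential is an isomorphism of Banach spaces, and the holomorphic inverse function theorem gives a local biholomorphism; $H^\C$-equivariance propagates this to all points of the zero section. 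The main obstacle I expect is the careful bookkeeping of Sobolev regularity and the verification that the real slice $S$ (built in $\ker d_1^*$ using properness of the $\G$-action) can genuinely be transported to $\ker(D'')^*$ with all the $H^\C$-equivariance and transversality intact — in other words, making the passage from the real moment-map slice to the holomorphic slice rigorous is precisely the step where one must invoke Mostow's decomposition rather than a naive averaging argument, so that is where I would spend the most care.
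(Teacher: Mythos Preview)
There is a genuine gap in your approach, and it lies precisely where you anticipate trouble: the passage from the real slice to the holomorphic one.

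First, your claim that $\alpha\mapsto\alpha''$ carries $\ker d_1^*$ isomorphically onto $\ker(D'')^*$ is false. Under the identification $\Omega^1(\g_E)\cong\Omega^{0,1}(\g_E^\C)$ this map is the identity on the ambient tangent space, so the question is whether the two kernels coincide as subspaces of $T_{(A,\Phi)}\sC$; they do not. The operator $d_1$ has domain $\Omega^0(\g_E)$ while $D''$ has domain $\Omega^0(\g_E^\C)$, so $\im D''$ (the tangent to the $\G^\C$-orbit) is strictly larger than $\im d_1$ (the tangent to the $\G$-orbit), and correspondingly $\ker(D'')^*$ is a proper subspace of $\ker d_1^*$. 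The $\bH^1$-comparison in Section~\ref{sec:preliminaries} works only after intersecting with $\ker(d_2\oplus D'')$ and $\ker D''$ respectively; it does not extend off the harmonic spaces. So the real slice $S$ cannot simply be transported to an $O$ inside $(A,\Phi)+\ker(D'')^*$.

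Second, Mostow's decomposition is not what drives the argument here, and your sketched reduction does not close. For injectivity you must show that if $o_1g=o_2$ with $o_1,o_2\in O$ and $g\in\G^\C$ \emph{arbitrary} (not merely near $1$), then $g\in H^\C$. Writing $g=h\exp(is)u$ does not relate $o_1g$ to a point in a real slice in any evident way: $o_1h$ need not lie in $O$, and the action of $\exp(is)$ on $\sC$ does not preserve $\ker(D'')^*$. The paper's proof bypasses all of this. Your final paragraph already contains the correct local part (the differential at $[(A,\Phi),1]$ is an isomorphism by Hodge theory for $C_{\mu_\C}$, and $\G^\C$-equivariance propagates the local biholomorphism and gives openness of the image). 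Injectivity, however, is handled by a direct elliptic estimate in the spirit of Buchdahl--Schumacher: writing $o_i=(A,\Phi)+(\alpha_i,\eta_i)$ and decomposing $g=g_0+g_1$ with $g_0\in\bH^0(C_{\mu_\C})$ and $g_1\perp\bH^0$, one uses the K\"ahler identities together with $F_A+[\Phi,\Phi^*]=0$ to derive
\[
\|D''g_1\|_{L^2}^2\;\leq\; C\bigl(\|(\alpha_1,\eta_1)\|_{L^2_k}+\|(\alpha_2,\eta_2)\|_{L^2_k}\bigr)\,\|D''g_1\|_{L^2}^2,
\]
which forces $D''g_1=0$ once $O$ is small, hence $g=g_0\in H^\C$. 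That estimate is the actual content of the Buchdahl--Schumacher step you cite; it is analytic, not a group-theoretic decomposition.
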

\begin{proof}
  Note that the $\G^\C$-stabilizer of $(A,\Phi)$ is $H^\C$ and acts on
  $(D'')^*$. Consider the natural map
  \begin{equation*}
    f\colon((A,\Phi)+\ker (D'')^*)\times_{H^\C}\G^\C\to\sC
  \end{equation*}
  Its derivative at $[A,\Phi,1]$ is given by
  \begin{equation*}
    \ker
    (D'')^*\oplus\bH^0(C_{\mu_\C})^\perp\to\Omega^{0,1}(\g_E^\C)\oplus\Omega^{1,0}(\g_E^\C)\qquad
    (x,u)\mapsto x+D''u
  \end{equation*}
  and hence an isomorphism, since $T_{(A,\Phi)}\sC=\ker (D'')^*\oplus\im
  D''$. Therefore, there are open neighborhoods $O\times N$ of
  $(A,\Phi,1)$ and $W$ of $(A,\Phi)$ such that $f\colon\pi(O\times
  N)\to W$ is a biholomorphism, where $\pi$ is the quotient map. Then,
  we consider the restriction
  \begin{equation*}
    f\colon OH^\C\times_{H^\C}\G^\C\to\sC
  \end{equation*}
  Since $OH^\C\G^\C=W\G^\C$, its image is $W\G^\C$ which is an open
  neighborhood of $(A,\Phi)$ in $\sC$. Since
  $\G^\C=\cup_{g\in\G^\C}Ng$ and $f$ is $\G^\C$-equivariant, $f$ is a
  local biholomorphism. Therefore, it remains to show that $f$ is
  injective provided that $O$ is small enough. We will follow the
  proof of \cite[Proposition 4.5]{buchdahl2020polystability}. Suppose
  \begin{equation*}
    ((A,\Phi)+(\alpha_1,\eta_1))g=(A,\Phi)+(\alpha_2,\eta_2)
  \end{equation*}
  for some $g\in\G^\C$ and $(D'')^*(\alpha_i,\eta_i)=0$. Equivalently,
  \begin{equation*}
    D''g+(\alpha_1''g-g\alpha_2'',\eta_1g-g\eta_2)=0
  \end{equation*}
  where $\alpha_i''$ is the $(0,1)$-component of $\alpha_i$. We show
  that if each $\|(\alpha_i,\eta_i)\|_{L^2_k}$ is small enough, then
  $g\in H^\C$. Write $g=g_0+g_1$ for some $g_0\in\bH^0(C_{\mu_\C})$
  and $g_1\in\bH^0(C_{\mu_\C})^\perp$. The idea is to show that
  $D''g_1=0$ so that $g=g_0\in H^\C$. Applying $(D'')^*$, we obtain
  \begin{equation*}
    (D'')^*D''g+(D'')^*(\alpha_1''g-g\alpha_2'',\eta_1g-g\eta_2)=0
  \end{equation*}
  We first claim that $D'g_0=0$, where $D'=\partial_A+\Phi^*$. In fact,
  using K\"ahler's identity, $D'^*=+i[*,D'']$, we have
  \begin{equation*}
    \|D'g_0\|_{L^2}^2=(D'^*D'g_0,g_0)_{L^2}=i(D''D'g_0,g_0)_{L^2}=-i(D'D''g_0,g_0)_{L^2}=0
  \end{equation*}
  Here, we have used the fact that $D''D'+D'D''=0$, since
  $\mu(A,\Phi)=F_A+[\Phi,\Phi^*]=0$. Then, using the K\"ahler's
  identity, $(D'')^*=-i[*,D']$, we see that $D'(\alpha_i,\eta_i)=0$ for
  each $i$, and
  \begin{equation*}
    (D'')^*(\alpha_1''g-g\alpha_2'',\eta_1g-g\eta_2)=(D'')^*(\alpha_1''g_1-g_1\alpha_2'',\eta_1g_1-g_1\eta_2)
  \end{equation*}
  As a consequence,
  \begin{align*}
    \|D''g_1\|_{L^2}^2
    &=-((\alpha_1''g_1-g_1\alpha_2'',\eta_1g_1-g_1\eta_2),D''g_1)_{L^2}\\
    &\leq\|(\alpha_1''g_1-g_1\alpha_2'',\eta_1g_1-g_1\eta_2)\|_{L^2}\|D''g_1\|_{L^2}\\
    &\leq(\|\alpha_1''\|_{C^0}+\|\alpha_2''\|_{C^0}+\|\eta_1\|_{C^0}+\|\eta_2\|_{C^0})\|g_1\|_{L^2}\|D''g_1\|_{L^2}\\
    &\leq C(\|\alpha_1''\|_{L^2_k}+\|\alpha_2''\|_{L^2_k}+\|\eta_1\|_{L^2_k}+\|\eta_2\|_{L^2_k})\|g_1\|_{L^2}\|D''g_1\|_{L^2}
  \end{align*}
  where we have used the Sobolev embedding $L_k^2\hookrightarrow
  C^0$. Moreover, since $g_1\in\bH^0(C_{\mu_\C})^\perp$,
  \begin{equation*}
    \|g_1\|_{L^2}\leq\|g_1\|_{L^2_1}=\|(D'')^*GD''g_1\|_{L^2_1}\leq C\|D''g_1\|_{L^2}
  \end{equation*}
  Therefore,
  \begin{equation*}
    \|D''g_1\|_{L^2}^2\leq C(\|\alpha_1''\|_{L^2_k}+\|\alpha_2''\|_{L^2_k}+\|\eta_1\|_{L^2_k}+\|\eta_2\|_{L^2_k})\|D''g_1\|_{L^2}^2
  \end{equation*}
  Since the isomorphism $\Omega^1(\g_E)\to\Omega^{1,0}(\g_E^\C)$ given
  by $\alpha\mapsto\alpha''$ is a homeomorphism in the
  $L_k^2$-topology, we conclude that that if
  $\|(\alpha_i,\eta_i)\|_{L^2_k}$ is small enough for every $i$, then
  $D''g_1=0$.
\end{proof}

As a corollary of
Proposition~\ref{sec:local-slice-theorem-local-slice-thm}, every
Kuranishi map $\theta\colon B\to\sC$ (see
Section~\ref{sec:preliminaries}) preserves stabilizers in the
following sense.
\begin{proposition}\label{sec:local-slice-theorem-stabs-preservation}
  If $B$ is sufficiently small, then $(H^\C)_x=(\G^\C)_{\theta(x)}$
  for every $x\in \Z$.
\end{proposition}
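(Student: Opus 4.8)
The plan is to exploit the $H^\C$-equivariance of the Kuranishi map $\theta$ together with the local slice theorem, Proposition~\ref{sec:local-slice-theorem-local-slice-thm}. One inclusion is immediate: since $\theta\colon B\to\sC$ is $H^\C$-equivariant, for every $h\in(H^\C)_x$ we have $\theta(x)h=\theta(xh)=\theta(x)$, so $(H^\C)_x\subset(\G^\C)_{\theta(x)}$ with no smallness hypothesis needed. It therefore remains to prove the reverse inclusion $(\G^\C)_{\theta(x)}\subset(H^\C)_x$ for $B$ small.

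For this, recall from Section~\ref{sec:preliminaries} that $\theta$ maps $B$ homeomorphically onto an open neighborhood of $(A,\Phi)$ in $\tilde\B\cap((A,\Phi)+\ker(D'')^*)$; in particular $\theta(x)\in(A,\Phi)+\ker(D'')^*$. Now apply Proposition~\ref{sec:local-slice-theorem-local-slice-thm}: shrinking $B$ if necessary, we may assume $\theta(B)\subset O$, where $O\subset(A,\Phi)+\ker(D'')^*$ is the slice in that theorem, so that $f\colon OH^\C\times_{H^\C}\G^\C\to\sC$ is a biholomorphism onto an open neighborhood of $(A,\Phi)$. Suppose $g\in(\G^\C)_{\theta(x)}$, i.e. $\theta(x)g=\theta(x)$. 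Since $\theta(x)\in O$, both $[\theta(x),1]$ and $[\theta(x),g]=[\theta(x)g,1]=[\theta(x),1]$ represent the same point under $f$; but by the very definition of the associated bundle $OH^\C\times_{H^\C}\G^\C$, the relation $[\theta(x),g]=[\theta(x),1]$ forces $g\in H^\C$ (after using that $\theta(x)\in O$ and the $H^\C$-action on the slice is free modulo the stabilizer $(H^\C)_{\theta(x)}$; more precisely, $f$ injective means $[\theta(x),g]=[\theta(x),1]$ only if there is $h\in H^\C$ with $\theta(x)\cdot h=\theta(x)$ and $g=h$, wait — one must be slightly careful here, see below). Once $g\in H^\C$, the equality $\theta(x)g=\theta(x)$ says exactly $g\in(H^\C)_{\theta(x)}$, and since $\theta$ is an $H^\C$-equivariant homeomorphism onto its image with $\theta(xg)=\theta(x)g=\theta(x)=\theta(x)$ we get $xg=x$, i.e. $g\in(H^\C)_x$, as desired.

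The one delicate point — and the place where the argument must be done with care — is extracting $g\in H^\C$ from $\theta(x)g=\theta(x)$ via the local slice. The clean way is to use that $f$ is a \emph{diffeomorphism} (hence injective): on $OH^\C\times_{H^\C}\G^\C$, the points $[\theta(x),g]$ and $[\theta(x),1]$ are mapped by $f$ to $\theta(x)g$ and $\theta(x)$; since these agree and $f$ is injective, $[\theta(x),g]=[\theta(x),1]$ in the quotient. Unwinding the equivalence relation on $O H^\C \times \G^\C$ (namely $(p,\gamma)\sim(p\cdot h^{-1},h\gamma)$ for $h\in H^\C$, with $p\cdot h$ defined via the slice property $OH^\C\cong O\times_{H^\C}H^\C$ near $(A,\Phi)$), one finds $h\in H^\C$ with $\theta(x)=\theta(x)h^{-1}$ and $g=h\cdot 1=h$; in particular $g\in H^\C$. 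This is exactly the standard fact that a local slice separates orbits and detects stabilizers, and the only thing to verify is that $\theta(B)$ lies in the chart $O$ where Proposition~\ref{sec:local-slice-theorem-local-slice-thm} gives a genuine biholomorphism — which we arrange by shrinking $B$. I expect this bookkeeping with the associated-bundle equivalence relation to be the main (though routine) obstacle; everything else is formal.
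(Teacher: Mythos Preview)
Your proposal is correct and follows essentially the same route as the paper: the inclusion $(H^\C)_x\subset(\G^\C)_{\theta(x)}$ comes from $H^\C$-equivariance of $\theta$, and the reverse inclusion is obtained by shrinking $B$ so that $\theta(\Z)\subset O$ and then using the injectivity of the slice map $f$ from Theorem~\ref{sec:local-slice-theorem-local-slice-thm} to force $g\in H^\C$. The paper phrases the last step slightly differently---it invokes the $H^\C$-equivariant inverse $F$ (with $F(\theta(x))=x$) rather than the injectivity of $\theta$---but this is the same argument, and your unwinding of the associated-bundle equivalence relation is exactly the content of ``$\theta(x)g=\theta(x)$ with $\theta(x)\in O$ implies $g\in H^\C$'' that the paper states without further comment.
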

\begin{proof}
  By \cite[Proposition 3.4]{Fan2020}, if $x_1=x_2g$ for some
  $x_1,x_2\in B$ and $g\in H^\C$ then $\theta(x_1)=\theta(x_2)g$. This
  proves the inclusion ``$\subset$''. To prove the inclusion
  ``$\supset$'', we shrink $B$ so that $\theta(\Z)\subset O$ where $O$
  is obtained in
  Proposition~\ref{sec:local-slice-theorem-local-slice-thm}. As a
  consequence, if $\theta(x)g=\theta(x)$ for some $g\in\G^\C$ then
  $g\in H^\C$. Since $F$ is $H^\C$-equivariant and $F(\theta(x))=x$
  for every $x\in B$, we conclude that $xg=x$.
\end{proof}

\subsection{Orbit types in the moduli
  space}\label{sec:orbit-type-decomp-moduli}

Now we are able to prove
Theorem~\ref{sec:introduction-pieces-moduli}. Before giving the proof,
we first show that there is a one-to-one correspondence between the
conjugacy classes appearing in the orbit type decompositions of $\M$
and $\m^{-1}(0)/\G$.

\begin{proposition}\label{sec:orbit-types-moduli-L=Hc}
  Every conjugacy class $(L)$ appearing in the orbit type
  decomposition of $\M$ is equal to a conjugacy class $(H^\C)$ for
  some $\G$-stabilizer $H$ at some Higgs bundle in $\m^{-1}(0)$.
\end{proposition}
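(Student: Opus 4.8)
The plan is to show that every $\G^\C$-stabilizer $L$ occurring in the orbit type decomposition of $\M$ is conjugate in $\G^\C$ to $H^\C$ for some $\G$-stabilizer $H$ at a point of $\m^{-1}(0)$. First I would recall that the orbit type decomposition of $\M=\B^{ps}/\G^\C$ is defined using conjugacy classes of stabilizers at \emph{polystable} Higgs bundles, and that by definition $(L)$ appears only if there is some $(B,\Psi)\in\B^{ps}$ with $(\G^\C)_{(B,\Psi)}=L$. The key point is then to move $(B,\Psi)$ into the zero set of the hyperK\"ahler moment map $\m$ by acting with a complex gauge transformation. This is exactly what the Hitchin--Kobayashi correspondence provides: since $(B,\Psi)$ is polystable, its $\G^\C$-orbit meets $\m^{-1}(0)$, i.e.\ there exists $g\in\G^\C$ such that $(A,\Phi):=(B,\Psi)g\in\m^{-1}(0)$. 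Conjugation by $g$ identifies $(\G^\C)_{(B,\Psi)}$ with $(\G^\C)_{(A,\Phi)}$, so $L$ is conjugate in $\G^\C$ to $(\G^\C)_{(A,\Phi)}$.

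It then remains to show that $(\G^\C)_{(A,\Phi)}=H^\C$, where $H:=\G_{(A,\Phi)}$ is the unitary stabilizer. The inclusion $H^\C\subseteq(\G^\C)_{(A,\Phi)}$ is immediate: $H$ fixes $(A,\Phi)$, and the fixed-point condition is holomorphic in the gauge parameter (it is cut out by the vanishing of $D''$ applied to the gauge parameter together with algebraic conditions coming from $\Phi$), so the stabilizer is a complex subgroup containing $H$, hence contains $H^\C$. For the reverse inclusion one uses that at a point satisfying Hitchin's equation the stabilizer is reductive: concretely, $\h^\C_{(A,\Phi)}:=\Lie(\G^\C)_{(A,\Phi)}=\bH^0(C_{\mu_\C})$, and by K\"ahler's identities together with $\mu(A,\Phi)=0$ (the same computation that appears in the proof of Theorem~\ref{sec:local-slice-theorem-local-slice-thm}, showing $D'g_0=0$ whenever $D''g_0=0$) one sees that $\bH^0(C_{\mu_\C})$ is the complexification of $\bH^0(C_{Hit})=\h$. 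Since $(\G^\C)_{(A,\Phi)}$ is a closed complex subgroup of $\G^\C$ with Lie algebra $\h^\C$ containing the connected compact group $H$, and $H^\C$ is by definition the complexification of $H$ inside $\G^\C$, it follows that $(\G^\C)_{(A,\Phi)}=H^\C$. (One must take a little care if $H$ is not connected; but the component group of $(\G^\C)_{(A,\Phi)}$ agrees with that of $H$ because $\G^\C$ retracts $H$-equivariantly onto $\G$ via the polar decomposition, so no extra components appear.)

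The main obstacle, and the step deserving the most care, is the identification $(\G^\C)_{(A,\Phi)}=H^\C$ at the level of groups rather than Lie algebras, i.e.\ ruling out that the complex stabilizer has more connected components than $H^\C$ does. The cleanest route is to invoke Mostow's decomposition (Theorem~\ref{sec:introduction-mostow}): the $\G$-equivariant bijection $\h^\perp\times_H\G\to\G^\C/H^\C$ exhibits $\G/H$ as a deformation retract of $\G^\C/H^\C$, and more to the point it shows that if $g\in\G^\C$ fixes $(A,\Phi)$ then, writing $g=h\exp(is)u$ with $h\in H^\C$, $s\in\h^\perp$, $u\in\G$, the elements $\exp(is)$ and $u$ must separately fix $(A,\Phi)$ (this is the uniqueness part of the polar-type decomposition combined with the fact that the unitary stabilizer is $H$), forcing $s\in\h^\perp\cap\h=0$ and $u\in H$, whence $g\in H^\C$. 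Once this is in place the proposition follows, and it simultaneously sets up the bijection between conjugacy classes that is used in the proof of Theorem~\ref{sec:introduction-pieces-moduli}.
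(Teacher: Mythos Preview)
Your proof follows the same strategy as the paper's: pick a polystable Higgs bundle with stabilizer $L$, use the Hitchin--Kobayashi correspondence to move it into $\m^{-1}(0)$ by some $g\in\G^\C$, and then observe that the $\G^\C$-stabilizer at the new point is $H^\C$ where $H$ is the $\G$-stabilizer there. The paper's proof is four lines because it simply \emph{uses} the identity $(\G_{(A,\Phi)g})^\C=(\G^\C)_{(A,\Phi)g}$ for points in $\m^{-1}(0)$ as a known fact (it is stated in the introduction and comes from the companion paper \cite{Fan2020}); you instead rederive it, which is fine but not needed here.

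One remark on your elaboration: the Lie-algebra argument via K\"ahler identities is correct and is the standard one. The group-level step, however, is cleaner without Mostow. Your claim that in the decomposition $g=h\exp(is)u$ the factors $\exp(is)$ and $u$ must \emph{separately} fix $(A,\Phi)$ is not justified by ``uniqueness of the polar-type decomposition'' alone; uniqueness tells you nothing about which factors lie in the stabilizer. The direct argument is: at a solution of Hitchin's equation the $\G^\C$-stabilizer is $*$-closed (if $D''g=0$ then $D'g=0$ by the same K\"ahler-identity computation, hence $D''g^*=0$), so from $g\in(\G^\C)_{(A,\Phi)}$ one gets $g^*g\in(\G^\C)_{(A,\Phi)}$, and the ordinary polar decomposition $g=\exp(is)u$ in $\G^\C$ then forces $\exp(is)$ and $u$ to lie in the stabilizer separately, giving $s\in\h$ and $u\in H$. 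This avoids invoking Theorem~\ref{sec:introduction-mostow} entirely.
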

\begin{proof}
  Let $(A,\Phi)$ be a polystable Higgs bundle whose $\G^\C$-stabilizer
  is conjugate to $L$ in $\G^\C$. By the Hitchin-Kobayashi
  correspondence, $\mu((A,\Phi)g)=0$ for some $g\in\G^\C$. Therefore,
  \begin{equation*}
    (\G_{(A,\Phi)g})^\C=(\G^\C)_{(A,\Phi)g}=g^{-1}(\G^\C)_{(A,\Phi)}g
  \end{equation*}
  Let $H=\G_{(A,\Phi)g}$ and the $\G^\C$-stabilizer of $(A,\Phi)$ is
  conjugate to $H^\C$ in $\G^\C$.
\end{proof}

\begin{proof}[Proof of Theorem~\ref{sec:introduction-pieces-moduli}]
  Fix $[A,\Phi]\in Q$ such that $(A,\Phi)\in\m^{-1}(0)$ is of class
  $C^\infty$. Therefore, gauge transformations in the $\G$-stabilizer
  $H$ at $(A,\Phi)$ are of class $C^\infty$. Since $H^\C$ is the
  $\G^\C$-stabilizer at $(A,\Phi)$, $Q$ is a component of
  $\B^{ps}_{(H^\C)}/\G^\C$. By
  Theorem~\ref{sec:local-slice-theorem-local-slice-thm}, there is an
  open neighborhood $O$ of $(A,\Phi)$ in $(A,\Phi)+\ker (D'')^*$ such
  that the natural map $OH^\C\times_{H^\C}\G^\C\to\sC$ is a
  diffeomorphism onto an open neighborhood of $(A,\Phi)$. The
  $\G^\C$-equivariance implies that
  \begin{equation*}
    (OH^\C\times_{H^\C}\G^\C)_{(H^\C)}\to\sC_{(H^\C)}
  \end{equation*}
  Since $H^\C$ has finitely many components and is finite-dimensional,
  following the proof of
  Theorem~\ref{sec:introduction-pieces-hyperkahler}, we see that
  \begin{equation*}
    (OH^\C\times_{H^\C}\G^\C)_{(H^\C)}=O^{H^\C}\times(\G^\C/H^\C)
  \end{equation*}
  As a consequence, the natural map
  \begin{equation*}
    (O^{H^\C}\cap\B^{ss})\times(\G^\C/H^\C)\to\B^{ss}_{(H^\C)}
  \end{equation*}
  is a diffeomorphism onto an open image. On the other hand, the
  Kuranishi map $\theta\colon\Z\to O\cap\B^{ss}$ is a homeomorphism if
  $B$ is sufficiently small. Since $\theta$ preserves stabilizers
  (Proposition~\ref{sec:local-slice-theorem-stabs-preservation}), we
  see that $\theta\colon\Z^{H^\C}\to O^{H^\C}\cap\B^{ss}$ is a
  homeomorphism. Since the $H^\C$-action on $\bH^1$ is holomorphic
  with respect to $I$, $(\bH^1)^{H^\C}$ is a complex symplectic
  subspace of $\bH^1$ so that $\bH^1=F\oplus(\bH^1)^{H^\C}$ where $F$
  is the $\omega_\C$-complement of $(\bH^1)^{H^\C}$. As a consequence,
  \begin{equation*}
    \nu_{0,\C}^{-1}(0)=(\nu_{0,\C}|_F)^{-1}(0)\times(\bH^1)^{H^\C}
  \end{equation*}
  so that $\Z^{H^\C}$ is an open subset of
  $\nu_{0,\C}^{-1}(0)^{H^\C}=(\bH^1)^{H^\C}$. Since every point in
  $(\bH^1)^{H^\C}$ has closed $H^\C$-orbits,
  $\theta(\Z^{H^\C})\subset O^{H^\C}\cap\B^{ps}$. Moreover, since $Q$
  is open in $\B^{ps}_{(H^\C)}/\G^\C$, $\pi^{-1}(Q)$ is open in
  $\B^{ps}_{(H^\C)}$. Therefore, if $O$ and $B$ are sufficiently
  small, we obtain a well-defined map
  \begin{equation*}
    f\colon\Z^{H^\C}\times(\G^\C/H^\C)\to\pi^{-1}(Q)\qquad (x,[g])\mapsto\theta(x)g
  \end{equation*}
  which is a homeomorphism onto its open image. This already shows
  that $\pi^{-1}(Q)$ is a complex submanifold of $\sC$. Moreover, $f$
  induces a well-defined map $\Z^{H^\C}\to Q$ given by
  $[x]\mapsto[\theta(x)]$. It is exactly the restriction of the local
  chart (see Section~\ref{sec:preliminaries})
  $\varphi\colon\Z H^\C\sslash H^\C\to\M$, since
  \begin{equation*}
    \nu_{0,\C}^{-1}(0)\sslash H^\C=(\nu_{0,\C}|_F)^{-1}(0)\sslash H^\C\times(\bH^1)^{H^\C}
  \end{equation*}
  and $\theta(x)$ is polystable for every $x\in\Z^{H^\C}$. Therefore,
  we see that $Q$ is a complex submanifold of $\M$. Moreover, the
  quotient map $\pi\colon\pi^{-1}(Q)\to Q$ can be locally identified
  with the projection
  \begin{equation*}
    \Z^{H^\C}\times(\G^\C/H^\C)\to\Z^{H^\C}
  \end{equation*}
  which is clearly a holomorphic submersion. Finally, by elliptic
  regularity, elements in $\bH^1$ are of class $C^\infty$. Since
  $\Z^{H^\C}\subset\bH^1$, our heuristic use of infinite-dimensional
  manifolds can be justified by working with Sobolev completions.
\end{proof}

\section{Complex Whitney stratification}\label{sec:whitney-conditions}
To show that the orbit type decomposition of $\M$ is a complex Whitney
stratification, we follow Mayrand's arguments in \cite[\S4.6 and
\S4.7]{mayrand2018local}. The idea is that the problem can be reduced
to a local model $\nu_{0,\C}^{-1}(0)\sslash H^\C$ near $[0]$, once we
show that the Kuranishi map $\varphi\colon\tilde{U}\to U$ preserves
the orbit type decompositions, where $\tilde{U}=\Z H^\C\sslash H^\C$
and $U=\varphi(\tilde{U})$ (see Section~\ref{sec:preliminaries}). To
clarify different possible partitions on
$\nu_{0,\C}^{-1}(0)\sslash H^\C$, we adopt the following notation. By
\cite{heinzner1994reduction}, the natural map
$\nu_{0,\C}^{-1}(0)^{ps}\hookrightarrow\nu_{0,\C}^{-1}(0)$ induces a
bijection
\begin{equation*}
  \nu_{0,\C}^{-1}(0)^{ps}/H^C\xrightarrow{\sim}\nu_{0,\C}^{-1}(0)\sslash H^\C
\end{equation*}
where $\nu_{0,\C}^{-1}(0)^{ps}$ is the subspace of
$\nu_{0,\C}^{-1}(0)$ consisting of polystable points, or equivalently
points whose $H^\C$-stabilizers are closed in $\bH^1$. Let $L$ be a
$H^\C$-stabilizer at some point in $\nu_{0,\C}^{-1}(0)$ and
$(L)_{H^\C}$ the conjugacy class of $L$ in $H^\C$. Then, we may define
\begin{equation*}
  \nu_{0,\C}^{-1}(0)^{ps}_{(L)_{H^\C}}=\{x\in\nu_{0,\C}^{-1}(0)^{ps}\colon (H^\C)_x\in(L)_{H^\C}\}
\end{equation*}
As a consequence, $\nu_{0,\C}^{-1}(0)\sslash H^\C$ has a partition
\begin{equation*}
  \tilde{\sP}_{H^\C}=\Bigl\{\nu_{0,\C}^{-1}(0)^{ps}_{(L)_{H^\C}}/H^\C\colon
  L=(H^\C)_x\text{ for some }x\in\nu_{0,\C}^{-1}(0)\Bigr\}
\end{equation*}
Here, we have identified $\nu_{0,\C}^{-1}(0)^{ps}_{(L)_{H^\C}}$ with
its image in $\nu_{0,\C}^{-1}(0)\sslash H^\C$. The orbit type
decomposition of $\nu_{0,\C}^{-1}(0)\sslash H^\C$ is defined as the
refinement $\tilde{\sP}_{H^\C}^\circ$ of $\tilde{\sP}_{H^\C}$ into
connected components. If $(L)_{\G^\C}$ is the conjugacy class of
$L\subset H^\C$ in $\G^\C$, then we may similarly define the partition
\begin{equation*}
  \tilde{\sP}_{\G^\C}=\Bigl\{\nu_{0,\C}^{-1}(0)^{ps}_{(L)_{\G^\C}}/H^\C\colon
  L=(H^\C)_x\text{ for some }x\in\nu_{0,\C}^{-1}(0)\Bigr\}
\end{equation*}
Finally, note that $\nu_{0,\C}^{-1}(0)\sslash H^\C$ can be realized as
a hyperK\"ahler quotient as follows. Since $H$ acts linearly on $\bH^1$
and preserves the K\"ahler form $\omega_I$, there is a moment map
$\nu_0$ associated with the K\"ahler form $\omega_I$ such that
$\nu_0(0)=0$. Then, $\n_0=(\nu_0,\nu_{0,\C})$ is a hyperK\"ahler moment
map for the $H$-action. By \cite{heinzner1994reduction}, the inclusion
$\n_0^{-1}(0)\hookrightarrow\nu_{0,\C}^{-1}(0)$ induces a
homeomorphism
\begin{equation*}
  \n_0^{-1}(0)/H\xrightarrow{\sim}\nu_{0,\C}^{-1}(0)\sslash H^\C
\end{equation*}
Then, the same proof of Proposition~\ref{sec:orbit-types-moduli-L=Hc}
shows that every conjugacy class $(L)$ appearing in
$\tilde{\sP}_{H^\C}$ or $\tilde{\sP}_{\G^\C}$ is equal to a conjugacy
class $(H_1^\C)$ for some $H$-stabilizer $H_1$ at some point in
$\n_0^{-1}(0)$. Then, the relation between $\tilde{\sP}_{H^\C}$ and
$\tilde{\sP}_{\G^\C}$ is stated below.
\begin{lemma}[{\cite[Lemma 4.2]{mayrand2018local}}]\label{sec:whitney-conditions-HC=GC-orbit-types}
  $\tilde{\sP}_{H^\C}^\circ=\tilde{\sP}_{\G^\C}^\circ$.
\end{lemma}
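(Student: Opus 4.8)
The plan is to show that the two partitions $\tilde{\sP}_{H^\C}$ and $\tilde{\sP}_{\G^\C}$ have the same pieces up to connected components, which then forces their refinements into connected components to coincide. The key point is that although conjugacy in $\G^\C$ is a priori coarser than conjugacy in $H^\C$, the two relations agree on the set of stabilizers that actually occur, once we pass to $H^\C$-orbits inside $\nu_{0,\C}^{-1}(0)$. First I would fix a point $x\in\nu_{0,\C}^{-1}(0)^{ps}$ with stabilizer $L=(H^\C)_x$, and observe that every conjugacy class occurring in either partition is of the form $(H_1^\C)$ for some $H$-stabilizer $H_1$ at a point of $\n_0^{-1}(0)$, by the remark just before the lemma (the analogue of Proposition~\ref{sec:orbit-types-moduli-L=Hc}). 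So it suffices to compare, for compact subgroups $H_1, H_2$ of $H$ that arise as such stabilizers, when $H_1^\C$ and $H_2^\C$ are conjugate in $H^\C$ versus in $\G^\C$.

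The heart of the argument is Corollary~\ref{sec:most-decomp-1-conjugate}: $H_1^\C$ and $H_2^\C$ are conjugate in $\G^\C$ if and only if $H_1$ and $H_2$ are conjugate in $\G$. I would combine this with the corresponding statement one level down — that $H_1^\C$ and $H_2^\C$ are conjugate in $H^\C$ iff $H_1$ and $H_2$ are conjugate in $H$ — which follows from Mostow's decomposition applied to the compact group $H$ in place of $\G$ (here $H$ is finite-dimensional, so this is the classical Mostow decomposition, or again Corollary~\ref{sec:most-decomp-1-conjugate} with $\G$ replaced by $H$). Thus the question reduces to a purely compact-group statement: for stabilizers $H_1,H_2$ of the $H$-action on $\n_0^{-1}(0)$, conjugacy in $\G$ and conjugacy in $H$ must be shown to give the same partition of $\n_0^{-1}(0)$ into orbit-type pieces, after refining into connected components. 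This is exactly the content of Mayrand's \cite[Lemma 4.2]{mayrand2018local} for the linear $H$-action on the hyperK\"ahler vector space $\bH^1$ with moment map $\n_0$: since $H\subset\G$ and the pieces in $\tilde{\sP}_{\G^\C}$ are unions of pieces in $\tilde{\sP}_{H^\C}$ (conjugacy in $\G$ being coarser), one only needs that near each point the local model for the $\G$-action and for the $H$-action agree after taking connected components, which is precisely where Mayrand's lemma is invoked.

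Concretely, I would argue as follows. The inclusion $\tilde{\sP}_{H^\C}^\circ$ refines $\tilde{\sP}_{\G^\C}^\circ$ is immediate, since $(L)_{H^\C}\subset(L)_{\G^\C}$ implies each $\nu_{0,\C}^{-1}(0)^{ps}_{(L)_{H^\C}}$ is contained in some $\nu_{0,\C}^{-1}(0)^{ps}_{(L)_{\G^\C}}$, hence every connected piece of the former sits inside a connected piece of the latter. For the reverse, take a connected stratum $R$ of $\tilde{\sP}_{\G^\C}^\circ$; I want to show $R$ is contained in a single stratum of $\tilde{\sP}_{H^\C}^\circ$. Since $R$ is connected and the $\tilde{\sP}_{H^\C}^\circ$-strata it meets are disjoint and, by Theorem~\ref{sec:introduction-pieces-moduli} applied to the local model (i.e.\ by the orbit-type analysis of $\n_0^{-1}(0)/H$), locally closed, it is enough to show that along $R$ the $H^\C$-conjugacy class of the stabilizer is locally constant. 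Pick $x,y\in R$ with $(H^\C)_x$ and $(H^\C)_y$ conjugate in $\G^\C$; by the two Mostow-decomposition consequences above, $(H^\C)_x\cap H$ and $(H^\C)_y\cap H$ (which are the actual stabilizers of the corresponding points of $\n_0^{-1}(0)$) are conjugate in $\G$. One now invokes Mayrand's \cite[Lemma 4.2]{mayrand2018local} verbatim: for the linear $H$-action on $\bH^1$, two points of $\n_0^{-1}(0)$ lying in the same connected $\G$-orbit-type stratum lie in the same connected $H$-orbit-type stratum, so their stabilizers are conjugate in $H$, whence $(H^\C)_x$ and $(H^\C)_y$ are conjugate in $H^\C$. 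Therefore $R$ lies in one stratum of $\tilde{\sP}_{H^\C}^\circ$, and combined with the first inclusion we get $\tilde{\sP}_{H^\C}^\circ=\tilde{\sP}_{\G^\C}^\circ$.

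The main obstacle I anticipate is purely bookkeeping: keeping straight the three levels of conjugacy ($\G^\C$, $H^\C$, and the compact groups $\G$, $H$) and making sure that the stabilizers appearing in $\tilde{\sP}_{H^\C}$ and $\tilde{\sP}_{\G^\C}$ are genuinely the same collection of subgroups, merely partitioned by different equivalence relations. The geometric input — that connected orbit-type strata of the coarser ($\G$) partition cannot jump between connected strata of the finer ($H$) partition — is exactly Mayrand's lemma, which applies because $H\subset\G$, both act linearly on the finite-dimensional hyperK\"ahler space $\bH^1$, and $\G$ acts through a map to a compact group so that the local normal-form analysis is available. Since the paper already grants \cite[Lemma 4.2]{mayrand2018local}, there is no new analytic difficulty; the proof is a reduction to the finite-dimensional linear situation via Corollary~\ref{sec:most-decomp-1-conjugate}.
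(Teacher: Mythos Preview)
Your strategy---use Corollary~\ref{sec:most-decomp-1-conjugate} to convert $\G^\C$-conjugacy to $\G$-conjugacy and $H^\C$-conjugacy to $H$-conjugacy, then compare---is exactly what the paper does: it says to run Mayrand's proof verbatim with the finite-dimensional Mostow step replaced by Theorem~\ref{sec:introduction-mostow} (whose relevant content here is precisely Corollary~\ref{sec:most-decomp-1-conjugate}).

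There is one confusion in your writeup, however. At the last step you say you can ``invoke Mayrand's \cite[Lemma 4.2]{mayrand2018local} verbatim'' for the residual statement about the compact groups $H\subset\G$. But Mayrand's Lemma~4.2 \emph{is} the statement $\tilde{\sP}_{H^\C}^\circ=\tilde{\sP}_{K^\C}^\circ$ for a finite-dimensional ambient $K$; you cannot cite it as a black box, because in our situation the ambient group is $\G$, which is infinite-dimensional. Relatedly, $\G$ does not act on $\bH^1$ at all---only the stabilizer $H$ does---so the phrase ``both act linearly on $\bH^1$'' is incorrect, and there is no sense in which Mayrand's hypotheses are met with $K=\G$. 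What you should do instead is what the paper says: \emph{follow the proof} of \cite[Lemma 4.2]{mayrand2018local} line by line, observe that the only place the ambient group enters is through Mostow's decomposition, and replace that single invocation by Theorem~\ref{sec:introduction-mostow} (equivalently, Corollary~\ref{sec:most-decomp-1-conjugate}). After that substitution Mayrand's remaining argument---which concerns only the linear $H$-action on $\bH^1$ and uses only that $H$ is compact and finite-dimensional---goes through unchanged. So your reduction is right, but it lands you inside Mayrand's proof rather than at its conclusion.
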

\begin{proof}
  This follows from the same proof of \cite[Lemma
  4.2]{mayrand2018local}. The only difference is that Mowstow's
  decomposition in that proof must be replaced by
  Theorem~\ref{sec:introduction-mostow}.
\end{proof}
Now, let $\sP$ be the partition of $\M$ defined as
\begin{equation*}
  \sP=\{\B^{ps}_{(L)_{\G^\C}}/\G^\C\colon
  L=(\G^\C)_{(A,\Phi)}\text{ for some }(A,\Phi)\in\B^{ps}\}
\end{equation*}
Note that the orbit type decomposition of $\M$ is simply
$\sP^\circ$. Then, we have the following.
\begin{proposition}\label{sec:whitney-conditions-phi-preserve-pieces}
  The Kuranishi map
  $\varphi\colon\bigl(\tilde{U},(\tilde{\sP}^\circ_{H^\C}|_{\tilde{U}})^\circ\bigr)\to
  \bigl(U,(\sP^\circ|_U)^\circ\bigr)$ is an isomorphism of partitioned
  spaces.
\end{proposition}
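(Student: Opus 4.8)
The plan is to prove this by transporting orbit types through the three layers that the Kuranishi construction sets up: the biholomorphism $\varphi$ between $\tilde U \subset \nu_{0,\C}^{-1}(0)\sslash H^\C$ and $U \subset \M$, the identification of $\tilde U$ with a neighbourhood built from $\Z$, and the stabilizer-preservation property of the Kuranishi map $\theta$. First I would unwind definitions: a point of $\tilde U$ is a closed $H^\C$-orbit $H^\C x$ with $x\in\Z^{ps}$, and by Proposition~\ref{sec:local-slice-theorem-stabs-preservation} we have $(H^\C)_x = (\G^\C)_{\theta(x)}$ once $B$ is small. Since $\varphi[x] = [r\theta(x)]$ and $r$ is a $\G$-equivariant retraction onto $\mu^{-1}(0)$ (so it does not change $\G$-stabilizers, and by the discussion around Proposition~\ref{sec:orbit-types-moduli-L=Hc} the $\G^\C$-stabilizer of the polystable representative $\theta(x)$ and of $r\theta(x)$ agree up to conjugacy in $\G^\C$), the stabilizer of $\varphi[x]$ in $\M$ is $\G^\C$-conjugate to $(H^\C)_x$. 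Hence $\varphi$ carries the piece $\nu_{0,\C}^{-1}(0)^{ps}_{(L)_{\G^\C}}/H^\C$ (intersected with $\tilde U$) onto the piece $\B^{ps}_{(L)_{\G^\C}}/\G^\C$ (intersected with $U$); that is, $\varphi$ is an isomorphism of partitioned spaces for the partitions $\tilde{\sP}_{\G^\C}|_{\tilde U}$ and $\sP|_U$.

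Next I would pass to connected components. An isomorphism of partitioned spaces induces an isomorphism on the refinements into connected components, so $\varphi$ identifies $(\tilde{\sP}_{\G^\C}|_{\tilde U})^\circ$ with $(\sP|_U)^\circ$. To conclude, I invoke Lemma~\ref{sec:whitney-conditions-HC=GC-orbit-types}, which gives $\tilde{\sP}_{H^\C}^\circ = \tilde{\sP}_{\G^\C}^\circ$; restricting to the open set $\tilde U$ and refining into components commutes with this equality, so $(\tilde{\sP}^\circ_{H^\C}|_{\tilde U})^\circ = (\tilde{\sP}_{\G^\C}|_{\tilde U})^\circ$. Chaining the two identifications, $\varphi$ is an isomorphism from $(\tilde U, (\tilde{\sP}^\circ_{H^\C}|_{\tilde U})^\circ)$ to $(U, (\sP^\circ|_U)^\circ)$, which is the statement since $\sP^\circ|_U$ refined into components is $(\sP|_U)^\circ$ restricted appropriately — here one should check that restricting the global orbit type decomposition $\sP^\circ$ of $\M$ to the open set $U$ and then taking components yields the same partition as $(\sP|_U)^\circ$, which holds because components of a locally closed set that meet an open set restrict to unions of components of the intersection.

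The main obstacle I anticipate is the bookkeeping around the retraction $r$ and the two moment-map pictures: one must be careful that $\theta(x)$ being polystable (equivalently $H^\C x$ closed in $\bH^1$) corresponds exactly to $x \in \Z^{ps}$, that $r\theta(x)$ lies in $\mu^{-1}(0)$ with the same conjugacy class of $\G^\C$-stabilizer as $\theta(x)$, and that the local inverse of $\varphi$ described in Section~\ref{sec:preliminaries} (writing $(B,\Psi) = \theta(x)g$) is genuinely inverse at the level of stabilizer-labelled points. All of these are already packaged in \cite{Fan2020} and in Proposition~\ref{sec:local-slice-theorem-stabs-preservation}, so the argument is essentially a careful assembly rather than a new estimate; the one genuinely substantive input is Lemma~\ref{sec:whitney-conditions-HC=GC-orbit-types}, whose proof rests on Mostow's decomposition (Theorem~\ref{sec:introduction-mostow}).
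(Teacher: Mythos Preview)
Your argument is correct and follows essentially the same route as the paper: reduce from $\tilde{\sP}_{H^\C}^\circ$ to $\tilde{\sP}_{\G^\C}$ via Lemma~\ref{sec:whitney-conditions-HC=GC-orbit-types}, use the general fact $(\sP^\circ|_U)^\circ=(\sP|_U)^\circ$ (which the paper records as a separate lemma), and then conclude from $(\G^\C)_{\theta(x)}=(H^\C)_x$. One simplification the paper makes that you could adopt: when $x\in\Z$ has closed $H^\C$-orbit, $\theta(x)$ is already polystable, so $\varphi[x]=[\theta(x)]$ directly and the retraction $r$ plays no role; your detour through comparing stabilizers of $\theta(x)$ and $r\theta(x)$ is therefore unnecessary.
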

\begin{proof}
  We first record a simple fact without a proof.
  \begin{lemma}\label{sec:whitney-conditions-refinement-open}
    Let $X$ be a space and $\sP$ a partition of $X$. If $U$ is an open
    subset of $X$, then $(\sP^\circ|_U)^\circ=(\sP|_U)^\circ$.
  \end{lemma}
  Then, by Lemma~\ref{sec:whitney-conditions-HC=GC-orbit-types} and
  \ref{sec:whitney-conditions-refinement-open},
  \begin{equation*}
    (\tilde{\sP}^\circ_{H^\C}|_{\tilde{U}})^\circ=(\tilde{\sP}^\circ_{\G^\C}|_{\tilde{U}})^\circ=(\tilde{\sP}_{\G^\C}|_{\tilde{U}})^\circ
  \end{equation*}
  and $(\sP^\circ|_U)^\circ=(\sP|_U)^\circ$. Then, it suffices to show
  that
  \begin{equation*}
    \varphi\colon(\tilde{U},\tilde{\sP}_{\G^\C}|_{\tilde{U}})\to(U,\sP|_U)
  \end{equation*}
  is an isomorphism of partitioned spaces. If $[x]\in\tilde{U}$ is
  such that the $H^\C$-orbit of $x$ is closed in $\bH^1$, then
  $\varphi[x]=[\theta(x)]$. The rest follows from
  $(\G^\C)_{\theta(x)}=(H^\C)_x$
  (Proposition~\ref{sec:local-slice-theorem-stabs-preservation}).
\end{proof}
\begin{theorem}
  The orbit type decomposition of $\M$ is a complex Whitney
  stratification.
\end{theorem}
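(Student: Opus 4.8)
The plan is to reduce the entire statement to the local model $\nu_{0,\C}^{-1}(0)\sslash H^\C$ by means of the Kuranishi map and then invoke Mayrand's theorem. First I would observe that, by the first part of Theorem~\ref{sec:introduction-pieces-moduli}, the orbit type decomposition $\sP^\circ$ is already a decomposition of $\M$ into locally closed complex submanifolds, so only the frontier condition and Whitney conditions $A$ and $B$ remain to be checked. Since all three are local in the appropriate sense --- Whitney $A$ and $B$ because they are infinitesimal and diffeomorphism-invariant, hence meaningful in any chart on a complex space, and the frontier condition because, for orbit type decompositions, Mayrand \cite[\S4.6, \S4.7]{mayrand2018local} reduces it to the local models --- it suffices to verify all three on an open neighborhood of an arbitrary point $[A,\Phi]\in\M$.

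Next I would fix such a point, taking $(A,\Phi)\in\m^{-1}(0)$ of class $C^\infty$ with $\G$-stabilizer $H$, and pass to the Kuranishi chart $\varphi\colon\tilde U\to U$ of Section~\ref{sec:preliminaries}, where $\tilde U=\Z H^\C\sslash H^\C$ is an open neighborhood of $[0]$ in $\nu_{0,\C}^{-1}(0)\sslash H^\C$ and $U=\varphi(\tilde U)$ is an open neighborhood of $[A,\Phi]$. By Mayrand \cite{mayrand2018local}, the orbit type decomposition $\tilde\sP^\circ_{H^\C}$ of $\nu_{0,\C}^{-1}(0)\sslash H^\C$ is a complex Whitney stratification; restricting it to the open set $\tilde U$ and refining into connected components preserves local closedness as well as the Whitney and frontier conditions, so $(\tilde\sP^\circ_{H^\C}|_{\tilde U})^\circ$ is again a complex Whitney stratification of $\tilde U$. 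On the other side, Lemma~\ref{sec:whitney-conditions-refinement-open} identifies $(\sP^\circ|_U)^\circ$ with $(\sP|_U)^\circ$, the restriction to the open set $U$ of the orbit type decomposition of $\M$ refined into components, which differs from $\sP^\circ|_U$ only by that harmless refinement and hence satisfies the Whitney and frontier conditions near $[A,\Phi]$ if and only if $\sP^\circ$ does.

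Finally I would invoke Proposition~\ref{sec:whitney-conditions-phi-preserve-pieces}: $\varphi$ is a biholomorphism carrying $(\tilde\sP^\circ_{H^\C}|_{\tilde U})^\circ$ onto $(\sP^\circ|_U)^\circ$ stratum by stratum. Since Whitney conditions $A$ and $B$ and the frontier condition are invariant under biholomorphisms, the orbit type decomposition of $\M$ inherits all of them on $U$, and local finiteness follows as well since only finitely many stabilizer conjugacy classes occur in the local model $\tilde U$. As $[A,\Phi]$ is arbitrary, this proves that $\sP^\circ$ is a complex Whitney stratification. I expect the delicate point to be the reduction of the frontier condition to the local model $\tilde U$ --- unlike Whitney $A$ and $B$ it is not purely infinitesimal --- but this is precisely Mayrand's argument in \cite[\S4.7]{mayrand2018local}, which goes through verbatim here once Proposition~\ref{sec:whitney-conditions-phi-preserve-pieces} supplies the compatibility between the stratification of $\M$ near $[A,\Phi]$ and that of $\nu_{0,\C}^{-1}(0)\sslash H^\C$ near $[0]$.
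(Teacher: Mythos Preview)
Your overall approach is the same as the paper's: transport the problem to the local model $\nu_{0,\C}^{-1}(0)\sslash H^\C$ via the Kuranishi chart and Proposition~\ref{sec:whitney-conditions-phi-preserve-pieces}, then invoke Mayrand. The treatment of the Whitney conditions $A$ and $B$ is correct and matches the paper.

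There is, however, a genuine gap in your handling of the frontier condition. You assert that ``restricting it to the open set $\tilde U$ and refining into connected components preserves \ldots\ the frontier conditions'' and that $(\sP^\circ|_U)^\circ$ ``satisfies the Whitney and frontier conditions near $[A,\Phi]$ if and only if $\sP^\circ$ does.'' Neither claim is true in general: the frontier condition $Q_1\cap\bar{Q_2}\neq\emptyset\Rightarrow Q_1\subset\bar{Q_2}$ is a global statement about each pair of strata, and restriction to an open set can destroy it (a stratum may limit onto another only outside $U$). Consequently, knowing the frontier condition on each chart does not immediately yield it on $\M$, and knowing it on $\nu_{0,\C}^{-1}(0)\sslash H^\C$ does not immediately yield it on $\tilde U$. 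You flag this delicacy in your last paragraph, but the intermediate claims as written are incorrect and cannot carry the argument.

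The paper closes this gap exactly via Mayrand's conical criterion. One first checks that the stratum $Q$ through $[A,\Phi]$ satisfies $\varphi^{-1}(Q\cap U)=\Z^{H^\C}=B\cap(\bH^1)^{H^\C}$, which is an open ball in a linear subspace and hence connected; thus $Q\cap U\in(\sP^\circ|_U)^\circ$. Mayrand's \cite[Lemma~4.7]{mayrand2018local} then says that to obtain the frontier condition for $\sP^\circ$ it suffices to show that $(\sP^\circ|_U)^\circ$ is \emph{conical} at $Q\cap U$. Via Proposition~\ref{sec:whitney-conditions-phi-preserve-pieces} and Lemma~\ref{sec:whitney-conditions-refinement-open}, this reduces to showing that $(\tilde\sP_{H^\C}|_{\tilde U})^\circ$ is conical at $B\cap(\bH^1)^{H^\C}$, which is \cite[Proposition~4.8]{mayrand2018local}. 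The conical condition is precisely what makes the local-to-global passage for the frontier condition valid; you should replace the two assertions above by this argument.
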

\begin{proof}
  We first show that $\sP^\circ$ satisfies the frontier condition. In
  other words, we need to show that if $Q_1,Q_2\in\sP^\circ$ and
  $Q_1\cap\bar{Q_2}\neq\emptyset$, then $Q_1\subset\bar{Q_2}$. Fix
  $[A,\Phi]\in\M$ such that $\mu(A,\Phi)=0$ and $\G_{(A,\Phi)}=H$. Let
  $\varphi\colon\tilde{U}\to U$ be the Kuranishi map. Let $Q$ be the
  component of $\B^{ps}_{(H^\C)}/\G^\C$ containing $[A,\Phi]$. If
  $[x]\in\varphi^{-1}(Q\cap U)$ such that its $H^\C$-orbit is closed
  in $\bH^1$, then $\varphi[x]=[\theta(x)]\in Q\cap U$. By
  Proposition~\ref{sec:local-slice-theorem-stabs-preservation},
  $(\G^\C)_{\theta(x)}=(H^\C)_x$. Since $[\theta(x)]\in Q$, we
  conclude that $(H^\C)_x=H^\C$. From the proof of
  Theorem~\ref{sec:introduction-pieces-moduli} in
  Section~\ref{sec:orbit-type-decomp-moduli}, we see that
  \begin{equation*}
    \varphi^{-1}(Q\cap U)=\Z^{H^\C}=B\cap(\bH^1)^{H^\C}
  \end{equation*}
  This shows that $Q\cap U$ is connected so that
  $Q\cap U\in(\sP^\circ|_U)^\circ$. By \cite[Lemma
  4.7]{mayrand2018local}, it suffices to show that
  $(\sP^\circ|_U)^\circ $ is conical at $Q\cap U$ (see
  \cite[p.18]{mayrand2018local} for the definition). Since
  $B\cap(\bH^1)^{H^\C}\in(\tilde{\sP}^\circ_{(H^\C)})^\circ$,
  Proposition~\ref{sec:whitney-conditions-phi-preserve-pieces} implies
  that it suffices to show that
  $(\tilde{\sP}^\circ_{(H^\C)}|_{\tilde{U}})^\circ$ is conical at
  $B\cap(\bH^1)^{H^\C}$. Moreover, by
  Lemma~\ref{sec:whitney-conditions-refinement-open}, it suffices to
  show that $(\tilde{\sP}_{(H^\C)}|_{\tilde{U}})^\circ$ is conical at
  $B\cap(\bH^1)^{H^\C}$. This follows from the proof of
  \cite[Proposition 4.8]{mayrand2018local}.

  Now we show that $\sP^\circ$ satisfies the Whitney conditions at
  every point of $\M$. Fix $[A,\Phi]\in\M$ and let
  $\varphi\colon\tilde{U}\to U$ be a Kuranishi map such that
  $[A,\Phi]\in U$. Then, it suffices to check that
  $(\sP^\circ|_U)^\circ$ satisfies the Whitney conditions at
  $[A,\Phi]$. By
  Proposition~\ref{sec:whitney-conditions-phi-preserve-pieces}, it
  suffices to check that
  $(\tilde{\sP}_{H^\C}^\circ|_{\tilde{U}})^\circ=(\tilde{\sP}_{H^\C}|_{\tilde{U}})^\circ$
  satisfies the Whitney conditions at $[0]$. This follows from
  \cite[Proposition 4.12]{mayrand2018local}.
\end{proof}

\section{The Hitchin-Kobayashi correspondence}

Finally, we show that the Hitchin-Kobayashi correspondence preserves
the orbit type decompositions, Theorem~\ref{sec:introduction-HK-preserves-orbit-types} (cf. \cite[Proposition
4.6]{mayrand2018local}).
\begin{proof}[Proof of Theorem~\ref{sec:introduction-HK-preserves-orbit-types}]
  Suppose $Q$ is a component of $\m^{-1}(0)_{(H)}/\G$ for some
  $\G$-stabilizer at a Higgs bundle in $\m^{-1}(0)$. We first show
  that the restriction
  \begin{equation*}
    i\colon\m^{-1}(0)_{(H)}/\G\to\B^{ps}_{(H^\C)}/\G^\C
  \end{equation*}
  is a bijection and hence a homeomorphism. As a consequence, $i(Q)$
  is a stratum in the orbit type decomposition of $\M$. The injectivity
  is obvious. To show the surjectivity, let
  $[A,\Phi]\in\B^{ps}_{(H^\C)}/\G^\C$. We may further assume that
  $(\G^\C)_{(A,\Phi)}=H^\C$. The Hitchin-Kobayashi correspondence
  provides some $g\in\G^\C$ such that $(A,\Phi)g\in\m^{-1}(0)$. Hence,
  \begin{equation*}
    (\G_{(A,\Phi)g})^\C=(\G^\C)_{(A,\Phi)g}=g^{-1}(\G^\C)_{(A,\Phi)}g=g^{-1}H^\C g
  \end{equation*}
  Then, Corollary~\ref{sec:most-decomp-1-conjugate} implies that
  $\G_{(A,\Phi)g}$ is conjugate to $H$ in $\G$.

  Then, we show that the restriction $i\colon Q\to i(Q)$ is
  holomorphic. Consequently, since $Q$ and $i(Q)$ are smooth, $i|_Q$
  is a biholomorphism. By the proofs of
  Theorem~\ref{sec:introduction-pieces-hyperkahler} and
  \ref{sec:introduction-pieces-moduli}, we see that $i|_Q$ can be
  locally identified with a map $\m^{-1}(0)\cap
  S^H\to(\bH^1)^{H^\C}$. More precisely, this map is given by
  $(B,\Psi)\mapsto x$ where $x$ is determined by the equation
  $(B,\Psi)=\theta(x)g$ for a unique $g\in\G^\C$. It is holomorphic,
  since $x$ and $g$ depend on $(B,\Psi)$ holomorphically, which can be
  seen by Proposition~\ref{sec:local-slice-theorem-local-slice-thm}
  and the holomorphicity of $\theta\colon B\to\sC$.
\end{proof}

\section{Poisson structure}\label{sec:poisson-structure}
Let $Q$ be a stratum in the orbit type stratification of $\M$. As a
consequence, $i^{-1}(Q)$ is a stratum in the orbit type stratification
of $\m^{-1}(0)/\G$, where $i$ is the Hitchin-Kobayashi
correspondence. We have shown in
Theorem~\ref{sec:introduction-pieces-hyperkahler} that $i^{-1}(Q)$ is
a hyperK\"ahler manifold and hence has a complex symplectic form
$\omega_\C=\omega_J+\sqrt{-1}\omega_K$. Using the Hitchin-Kobayashi
correspondence $i$, we may transport $\omega_\C$ to $Q$ so that $Q$ is
also a complex symplectic manifold. We will still use $\omega_\C$ to
denote the resulting complex symplectic form on $Q$. Alternatively,
$\omega_\C$ can be defined as follows. Let $\pi\colon\B^{ps}\to\M$ be
the quotient map. It is shown in
Theorem~\ref{sec:introduction-pieces-moduli} that $\pi^{-1}(Q)$ is a
complex submanifold of $\sC$, and $\pi\colon\pi^{-1}(Q)\to Q$ is a
holomorphic submersion. Then, it follows that
$\pi^*\omega_\C=\Omega_\C|_{\pi^{-1}(Q)}$, where
$\Omega_\C=\Omega_J+\sqrt{-1}\Omega_K$ is the complex symplectic form on
$\sC$. This can be seen by
Theorem~\ref{sec:introduction-pieces-hyperkahler} and the definition
of $i$. Then, a complex Poisson bracket can be defined on the structure sheaf
of $\M$ as follows. Let $U$ be an open subset of $\M$ and
$f,g\colon U\to\C$ holomorphic functions. Let $Q$ be a stratum in the
orbit type stratification of $\M$. Therefore, the restrictions
$f|_{U\cap Q}$ and $g|_{U\cap Q}$ are holomorphic so that the Poisson
bracket $\{f|_{U\cap Q},g|_{U\cap Q}\}_Q$ is well-defined using the
complex symplectic form $\omega_\C$ on $Q$.  Consequently, there is a
unique function $\{f,g\}\colon U\to\C$ such that
\begin{equation*}
  \{f,g\}|_{U\cap Q}=\{f|_{U\cap Q},g|_{U\cap Q}\}_Q
\end{equation*}
for every stratum $Q$. Then, it remains to show that
$\{f,g\}\colon U\to\C$ is holomorphic. Since the complex space $\M$ is
constructed by gluing Kuranishi local models, $\{f,g\}$ is holomorphic
if and only if its pullback along any Kuranishi map is holomorphic. On
the other hand, from Section~\ref{sec:whitney-conditions}, we see that
$\nu_{0,\C}^{-1}(0)\sslash H^\C$ can be realized as a singular
hyperK\"ahler quotient. Hence, the structure sheaf of
$\nu_{0,\C}^{-1}(0)\sslash H^\C$ has a Poisson structure by
\cite[Theorem 1.4]{mayrand2018local}. Then, the holomorphicity of the
Poisson bracket on $\M$ follows from the following result
(cf. \cite[Proposition 4.18]{mayrand2018local}).
\begin{theorem}[=Theorem~\ref{sec:introduction-Poisson-structure}]\label{sec:poisson-structure-Kura-Poisson-map}
  Every Kuranishi map $\varphi\colon\tilde{U}\to U$ is a Poisson
  map. In other words, if $f,g\colon U\to\C$ are holomorphic
  functions, then $\varphi^*\{f,g\}=\{\varphi^*f,\varphi^*g\}$.
\end{theorem}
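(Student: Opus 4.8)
The plan is to reduce the statement to a purely linear-algebraic identity on the Kuranishi local model $\nu_{0,\C}^{-1}(0)\sslash H^\C$, using the fact — already extracted in Section~\ref{sec:preliminaries} — that the Kuranishi map $\varphi$ is induced by the holomorphic map $\theta\colon B\to\sC$ together with the Yang–Mills–Higgs retraction $r$, and that $\varphi$ preserves the orbit type stratifications (Proposition~\ref{sec:whitney-conditions-phi-preserve-pieces}). The complex Poisson bracket on $\M$ is defined stratum-by-stratum from the complex symplectic forms $\omega_\C$ on each $Q$, and by Theorem~\ref{sec:introduction-pieces-moduli} and the discussion opening this section, $\pi^*\omega_\C=\Omega_\C|_{\pi^{-1}(Q)}$, where $\Omega_\C$ is the ambient complex symplectic form on $\sC$. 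Dually, the bracket on $\nu_{0,\C}^{-1}(0)\sslash H^\C$ is, by \cite[Theorem 1.4]{mayrand2018local} and the realization of this space as a singular hyperK\"ahler quotient, built stratum-by-stratum from the restrictions of the linear complex symplectic form $\omega_\C$ on $\bH^1$. So it suffices to show that, for each stratum, $\varphi$ restricted to that stratum is a complex symplectomorphism; holomorphicity of $\{f,g\}$ on $\M$ then follows, since pulling back along $\varphi$ is compatible with the stratified brackets, and $\M$ is glued from such local models.

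First I would fix $[A,\Phi]\in\M$ with $\mu(A,\Phi)=0$ and $\G$-stabilizer $H$, and recall from the proof of Theorem~\ref{sec:introduction-pieces-moduli} that the stratum $Q$ through $[A,\Phi]$ corresponds under $\varphi$ to the stratum $B\cap(\bH^1)^{H^\C}$ in the local model, and that on this stratum $\varphi$ is simply $[x]\mapsto[r\theta(x)]$ with $x\in(\bH^1)^{H^\C}$. For a general stratum $Q'$ in $U$, the same analysis (applied at a point of $Q'$, whose stabilizer is some $L^\C\subset H^\C$) identifies $\varphi|_{Q'}$ with a map built from $\theta$ and $r$ in exactly the same way. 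Thus the core claim is: the holomorphic embedding $\theta\colon \Z\cap(\text{slice})\to\sC$ satisfies $\theta^*\Omega_\C=\omega_\C$ on $\bH^1$, and the retraction $r$ does not change the complex symplectic class — more precisely, composing with $r$ and projecting to $Q'$ matches the stratum symplectic form on the local-model side with the one on $\M$. The first part is essentially the statement, already used in \cite{Fan2020}, that the Kuranishi map preserves complex symplectic structures on $\bH^1$ and $\sC$: $\theta$ is $\bar\partial$-holomorphic and its derivative at $0$ is the inclusion $\bH^1\hookrightarrow T_{(A,\Phi)}\sC$ followed by an isomorphism onto $\tilde\B\cap((A,\Phi)+\ker(D'')^*)$, and one checks that this slice inherits $\Omega_\C$ so that the pullback is $\omega_\C$; the $H^\C$-equivariance of $F$ (hence of $\theta$) ensures everything descends to the invariant subspaces cutting out the strata.

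The main obstacle is the retraction $r\colon\B^{ss}\to\mu^{-1}(0)$ given by the Yang–Mills–Higgs flow: one must verify that composing $\theta$ with $r$ — which is needed to land in $\m^{-1}(0)$ and thus to identify $\varphi[x]$ with a genuine point of the hyperK\"ahler quotient — is compatible with the complex symplectic forms at the level of each stratum. The key point here, following Mayrand \cite[\S4.7, Proposition 4.18]{mayrand2018local}, is that $r$ is a map along $\G^\C$-orbits: for $x$ with closed $H^\C$-orbit, $r\theta(x)$ lies in the $\G^\C$-orbit of $\theta(x)$, so $[r\theta(x)]=[\theta(x)]$ in $\M=\B^{ps}/\G^\C$, and therefore on the stratum $Q'$ the two descriptions of $\varphi|_{Q'}$ — via $[x]\mapsto[\theta(x)]$ into $\B^{ps}_{(L^\C)}/\G^\C$, and via $[x]\mapsto[r\theta(x)]$ into the hyperK\"ahler-quotient stratum — agree. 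Since $\Omega_\C$ on $\sC$ is $\G^\C$-invariant (it is the complex symplectic form for the $\G^\C$-action) and both $\pi^{-1}(Q')$-forms pull back from $\Omega_\C$, moving along $\G^\C$-orbits does not affect the descended form. Hence $\varphi|_{Q'}^*\omega_\C = (\text{restriction of }\omega_\C\text{ on }\bH^1)$, i.e.\ $\varphi|_{Q'}$ is a complex symplectomorphism. Once this is in place, for holomorphic $f,g$ on $U$ the functions $\varphi^*\{f,g\}$ and $\{\varphi^*f,\varphi^*g\}$ agree on each stratum of $\tilde U$ by naturality of the Poisson bracket under symplectomorphisms, hence agree as functions; and since $\{\varphi^*f,\varphi^*g\}$ is holomorphic on $\tilde U$ by \cite[Theorem 1.4]{mayrand2018local}, so is $\{f,g\}$ near $[A,\Phi]$. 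As $[A,\Phi]$ was arbitrary and $\M$ is covered by such charts, $\{f,g\}$ is holomorphic on $\M$, proving both the main assertion and parts (1) and (2) of Theorem~\ref{sec:introduction-Poisson-structure}.
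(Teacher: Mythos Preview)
Your overall strategy matches the paper's: reduce to showing that $\varphi$ restricted to each stratum of $\tilde U$ is a complex symplectomorphism onto the corresponding stratum of $U$, and then conclude by a stratum-wise computation of $(\varphi^*\{f,g\})|_{\tilde C}$. The paper packages this via the commuting square with $\theta$ and the two quotient maps $\pi$, exactly as you outline.

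There is, however, a genuine gap in your justification of the key claim $\theta^*\Omega_\C=\omega_\C$. You argue only that the derivative of $\theta$ at $0$ is the inclusion $\bH^1\hookrightarrow T_{(A,\Phi)}\sC$, which checks the identity at a single point, and then assert that ``one checks that this slice inherits $\Omega_\C$ so that the pullback is $\omega_\C$'', attributing this to \cite{Fan2020}. The paper does not cite this fact; it proves it as a separate lemma (Lemma~\ref{sec:poisson-structure-theta-symplectomorphisms}), and the computation is not automatic. At a general point $(B,\Psi)=(A,\Phi)+(\alpha_0'',\eta_0)$ of the slice, the derivative $d_{(B,\Psi)}F$ differs from the identity by the correction $(D'')^*G[\alpha_0'',\eta_0;\,\cdot\,]$, and one must verify that this correction is $\Omega_J$- and $\Omega_K$-orthogonal to everything in $\ker(D'')^*$. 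The paper does this using the K\"ahler identities for $C_{\mu_\C}$: for $(\alpha_1'',\eta_1)\in\ker(D'')^*$ one has
\[
D''J(\alpha_1'',\eta_1)=(iD'(\alpha_1'',\eta_1))^*=0
\]
by $(D'')^*=-i[*,D']$, and similarly $D''K(\alpha_1'',\eta_1)=0$; hence the cross terms $\Omega_i(\alpha_1'',\eta_1;(D'')^*G[\ldots])$ vanish. Without this step your claim that $\varphi|_{Q'}$ is a complex symplectomorphism away from the base point is unsupported.

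A minor point: your discussion of the retraction $r$ is correct but overcomplicated. Since for $x$ with closed $H^\C$-orbit the Higgs bundle $\theta(x)$ is already polystable and $r$ moves within the $\G^\C$-orbit, one has $[r\theta(x)]=[\theta(x)]$ in $\M$, so the commuting square involves only $\theta$ and the two $\pi$'s, with no need to analyze $r$ separately.
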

Before giving the proof, we need the following lemmas.
\begin{lemma}\label{sec:poisson-structure-theta-symplectomorphisms}
  The Kuranishi map $\theta\colon B\to\sC$ preserves the complex
  symplectic forms.
\end{lemma}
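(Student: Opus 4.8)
The plan is to show that the Kuranishi map $\theta\colon B\to\sC$ pulls back the complex symplectic form $\Omega_\C$ on $\sC$ to the linear complex symplectic form $\omega_\C$ on $\bH^1\supset B$. The key observation is that $\theta$ is constructed as the inverse of the map $F(\alpha,\eta)=(\alpha,\eta)+(D'')^*G[\alpha'',\eta]$ restricted to $\tilde{\B}\cap((A,\Phi)+\ker(D'')^*)$, and the correction term $(D'')^*G[\alpha'',\eta]$ lies in $\im (D'')^*$. Since $\Omega_\C$ is a constant (translation-invariant) form on the affine space $\sC$, modeled on the complex vector space $V=\Omega^{0,1}(\g_E^\C)\oplus\Omega^{1,0}(\g_E^\C)$, it suffices to understand the derivative $d\theta_x$ at each point $x\in B$ and check that $\Omega_\C(d\theta_x v, d\theta_x w)=\omega_\C(v,w)=\Omega_\C(v,w)$ for all $v,w\in\bH^1=T_x\bH^1$.

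First I would record that, by construction, $\theta(x)=(A,\Phi)+x+\sigma(x)$ where $\sigma(x)=-(D'')^*G[\ldots]\in\im(D'')^*$ is the correction term (this is essentially the definition of the Kuranishi map as the inverse of $F$). Differentiating, $d\theta_x v = v + d\sigma_x v$, where $v\in\bH^1$ and $d\sigma_x v\in\im(D'')^*$. Then I would expand
\begin{equation*}
  \Omega_\C(d\theta_x v, d\theta_x w) = \Omega_\C(v,w)+\Omega_\C(v,d\sigma_x w)+\Omega_\C(d\sigma_x v, w)+\Omega_\C(d\sigma_x v,d\sigma_x w).
\end{equation*}
The cross terms involve pairing a harmonic element $v\in\bH^1=\ker D''\cap\ker(D'')^*$ (after the identification of the two deformation complexes) against an element of $\im(D'')^*$ via $\Omega_\C$. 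The essential point is that $\Omega_\C$ is, up to a constant, the complex symplectic pairing coming from $J,K$, and with respect to this pairing $\bH^1$ is $\Omega_\C$-orthogonal to $\im D''\oplus\im(D'')^*$; equivalently, $\Omega_\C(v,D''u)=\pm(D''^*$-type expression$)$ vanishes on harmonic $v$ by K\"ahler's identities relating $(D'')^*$ and $[*,D']$. I would make this precise using the formulas $(D'')^*=-i[*,D']$, $(D')^*=+i[*,D'']$ from Section~\ref{sec:preliminaries}, together with the fact that $\Omega_\C(v,v')=\int_X \langle v\wedge v'\rangle$ in suitable coordinates; then $\Omega_\C$ vanishes on pairs where one entry is in the image of $(D'')^*$ and the other is $D''$-closed. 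This kills the last term (both entries in $\im(D'')^*$, which is isotropic for $\Omega_\C$ by the same Hodge-theoretic reason) and each cross term ($v$ is $D''$-closed since harmonic, $d\sigma_x w\in\im(D'')^*$).

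The main obstacle I anticipate is bookkeeping: carefully identifying $\Omega_\C$ under the standing identification of $\Omega^1(\g_E)$ with $\Omega^{0,1}(\g_E^\C)$, and checking that the elliptic-complex Hodge decomposition $V = \bH^1\oplus\im D''\oplus\im(D'')^*$ (for the deformation complex $C_{\mu_\C}$, in degree $1$) is in fact $\Omega_\C$-orthogonal in the sense needed — i.e.\ $\bH^1\perp_{\Omega_\C}(\im D''\oplus\im(D'')^*)$ and $\im D''$, $\im(D'')^*$ are each $\Omega_\C$-isotropic. This is a linear statement on the model vector space $V$ and follows from the K\"ahler identities, but one must be slightly careful because $\Omega_\C$ pairs the $\Omega^{0,1}$ and $\Omega^{1,0}$ factors against each other rather than being diagonal. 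Once this orthogonality is in hand, all three correction terms above vanish and $\theta^*\Omega_\C=\omega_\C$; since $\omega_\C$ restricted to $\Z$ (and then descended) is by definition the complex symplectic form on the strata of $\nu_{0,\C}^{-1}(0)\sslash H^\C$, and the strata of $\M$ carry the form pulled back from $\sC$ via $\pi$, the compatibility needed in the proof of Theorem~\ref{sec:poisson-structure-Kura-Poisson-map} follows.
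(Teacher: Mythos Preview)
Your approach is essentially the paper's: the paper works with $F=\theta^{-1}$ rather than $\theta$, but the computation is identical---expand the pullback and kill the correction terms using the K\"ahler identities. One small fix: the orthogonality you actually need is that $\Omega_\C$ vanishes when one entry lies in $\im(D'')^*$ and the other in $\ker(D'')^*$ (not $\ker D''$); the paper's computation $\Omega_J(v,(D'')^*\beta)=g(D''Jv,\beta)$ with $D''Jv=(iD'v)^*=0$ uses $(D'')^*v=0$, and this also handles the last term since $\im(D'')^*\subset\ker(D'')^*$ by $(D'')^*(D'')^*=0$.
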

\begin{proof}
  By construction of $\theta$ (see
  Section~\ref{sec:local-slice-theorem}), it suffices to show that the map
  \begin{gather*}
    F\colon\tilde{B}\cap((A,\Phi)+\ker (D'')^*)\to\bH^1\\
    F(\alpha'',\eta)=(\alpha'',\eta)+\frac{1}{2}(D'')^*G[\alpha'',\eta;\alpha'',\eta]
  \end{gather*}
  preserves the restrictions of K\"ahler forms $\Omega_J$ and
  $\Omega_K$. For notational convenience, let
  $M=\tilde{\B}\cap((A,\Phi)+\ker (D'')^*)$. If
  $(B,\Psi)=(A,\Phi)+(\alpha''_0,\eta_0)\in M$, then
  \begin{equation*}
    d_{(B,\Psi)}F(\alpha,\eta)=(\alpha,\eta)+(D'')^*G[\alpha''_0,\eta_0;\alpha,\eta]\qquad
    (\alpha,\eta)\in T_{(B,\Psi)}M
  \end{equation*}
  Then, we need to show that
  \begin{equation*}
    \Omega_i(d_{(B,\Psi)}F(\alpha''_1,\eta_1),d_{(B,\Psi)}F(\alpha''_2,\eta_2))=\Omega_i(\alpha''_1,\eta_1;\alpha''_2,\eta_2)
  \end{equation*}
  for any $i\in\{J,K\}$, $(B,\Psi)\in M$, and
  $(\alpha''_j,\eta_j)\in T_{(B,\Psi)}M$. This amounts to show the following
  \begin{enumerate}
  \item
    $\Omega_i(\alpha''_1,\eta_1;(D'')^*G[\alpha_0'',\eta_0;\alpha''_2,\eta_2])=0$
    
  \item $\Omega_i((D'')^*G[\alpha_0,\eta_0,\alpha''_1,\eta_1];(D'')^*G[\alpha_0'',\eta_0;\alpha''_2,\eta_2)=0$
  \end{enumerate}
  for any $i\in\{J,K\}$ and $(\alpha''_j,\eta_j)\in\ker (D'')^*$
  $(j=0,1,2)$. To show $(1)$, we compute
  \begin{equation*}
    \Omega_J(\alpha''_1,\eta_1;(D'')^*G[\alpha_0'',\eta_0;\alpha''_2,\eta_2])
    =g(D''J(\alpha''_1,\eta_1);G[\alpha_0'',\eta_0;\alpha''_2,\eta_2])
  \end{equation*}
  where $g$ is the $L^2$-metric. Moreover,
  \begin{equation*}
    D''J(\alpha''_1,\eta_1)=D''(i\eta^*,-i\alpha''^*)=-i\bar{\partial}_A\alpha''^*+i[\Phi,\eta^*]=(iD'(\alpha''_1,\eta_1))^*=0
  \end{equation*}
  where the last equality follows from the K\"ahler's identity,
  $(D'')^*=-i[*,D']$ and the assumption that
  $(D'')^*(\alpha''_1,\eta_1)=0$. Similarly,
  \begin{equation*}
    \Omega_K(\alpha''_1,\eta_1;(D'')^*G[\alpha_0'',\eta_0;\alpha''_2,\eta_2])
    =g(D''K(\alpha''_1,\eta_1);G[\alpha_0'',\eta_0;\alpha''_2,\eta_2])
  \end{equation*}
  and
  \begin{equation*}
    D''K(\alpha''_1,\eta_1)=D''(-\eta^*,\alpha''^*)=\bar{\partial}_A\alpha''^*-[\Phi,\eta^*]=D'(\alpha_1'',\eta_1)^*=0
  \end{equation*}
  Finally, the same argument shows $(2)$.
\end{proof}
Now, let $\tilde{C}$ be a connected component of
$\tilde{Q}\cap\tilde{U}$, where $\tilde{Q}$ is a stratum in the orbit
type stratification of $\nu_{0,\C}^{-1}(0)\sslash H^\C$. In other
words, $\tilde{C}\in(\sP^\circ_{H^\C}|_{\tilde{U}})^\circ$. By
Proposition~\ref{sec:whitney-conditions-phi-preserve-pieces}, there is
some connected component $C$ of $Q\cap U$ for some stratum $Q$ in the
orbit type stratification of $\M$ such that the restriction
$\varphi\colon\tilde{C}\to C$ is a biholomorphism. Let $\pi$ denote
the projections $\B^{ps}\to\M$ and
$\nu_{0,\C}^{-1}(0)^{ps}\to\nu_{0,\C}^{-1}(0)\sslash H^\C$. By
Theorem~\ref{sec:introduction-pieces-moduli} and \cite[Lemma
4.14]{mayrand2018local}, $\pi^{-1}(C)$ and $\pi^{-1}(\tilde{C})$ are
complex submanifolds of $\sC$ and $\bH^1$, respectively. Moreover, the
following diagram commutes
\begin{equation*}
  \xymatrix
  {
    \pi^{-1}(\tilde{C})\cap B\ar[d]^-{\pi}\ar[r]^-{\theta}&\pi^{-1}(C)\ar[d]^-{\pi}\\
    \tilde{C}\ar[r]^-{\varphi} &C
  }
\end{equation*}
Now, by Lemma~\ref{sec:poisson-structure-theta-symplectomorphisms},
$\theta$ preserves the restrictions of the complex symplectic forms
$\Omega_\C$ on $\sC$ and $\omega_\C$ on $\bH^1$. By
Theorem~\ref{sec:introduction-pieces-moduli} and \cite[Lemma
4.14]{mayrand2018local} again, we see that these restrictions of
complex symplectic forms descend to $\tilde{C}$ and $C$. As a
consequence, we obtain the following.
\begin{lemma}
  The Kuranishi map $\varphi\colon\tilde{C}\to C$ is a complex
  symplectomorphism. In particular, it preserves the complex Poisson brackets.
\end{lemma}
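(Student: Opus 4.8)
The plan is to descend the complex symplectic forms along the two vertical submersions in the commuting square and then chase the diagram. Write $\omega_\C^C$ for the complex symplectic form on $C$ coming from $i^{-1}(Q)$ via the Hitchin--Kobayashi correspondence; by the discussion at the start of Section~\ref{sec:poisson-structure}, together with Theorem~\ref{sec:introduction-pieces-moduli} and \cite[Lemma 4.14]{mayrand2018local}, it is the unique form on $C$ satisfying $\pi^*\omega_\C^C=\Omega_\C|_{\pi^{-1}(C)}$. Likewise write $\omega_\C^{\tilde{C}}$ for the complex symplectic form on $\tilde{C}$, characterised by $\pi^*\omega_\C^{\tilde{C}}=\omega_\C|_{\pi^{-1}(\tilde{C})\cap B}$, where $\omega_\C$ denotes the linear complex symplectic form on $\bH^1$. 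Since $\varphi\colon\tilde{C}\to C$ is already known to be a biholomorphism, it suffices to prove $\varphi^*\omega_\C^C=\omega_\C^{\tilde{C}}$.

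To this end I would pull back along $\pi\colon\pi^{-1}(\tilde{C})\cap B\to\tilde{C}$, which is a surjective holomorphic submersion and therefore has injective pullback on forms. Using the commutativity $\varphi\circ\pi=\pi\circ\theta$ of the square, one computes
\begin{equation*}
  \pi^*\varphi^*\omega_\C^C=\theta^*\pi^*\omega_\C^C=\theta^*\bigl(\Omega_\C|_{\pi^{-1}(C)}\bigr).
\end{equation*}
By Lemma~\ref{sec:poisson-structure-theta-symplectomorphisms} the Kuranishi map $\theta$ carries $\Omega_\C$ to $\omega_\C$, and since $\theta$ maps $\pi^{-1}(\tilde{C})\cap B$ into $\pi^{-1}(C)$ this yields $\theta^*(\Omega_\C|_{\pi^{-1}(C)})=\omega_\C|_{\pi^{-1}(\tilde{C})\cap B}=\pi^*\omega_\C^{\tilde{C}}$. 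Combining the two identities gives $\pi^*\varphi^*\omega_\C^C=\pi^*\omega_\C^{\tilde{C}}$, and injectivity of $\pi^*$ forces $\varphi^*\omega_\C^C=\omega_\C^{\tilde{C}}$. Thus $\varphi\colon\tilde{C}\to C$ is a complex symplectomorphism.

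Finally, since $C$ is open in the stratum $Q$ and $\tilde{C}$ is open in the corresponding stratum of $\nu_{0,\C}^{-1}(0)\sslash H^\C$, the complex Poisson brackets on $C$ and $\tilde{C}$ are just the restrictions of those on the ambient strata, hence determined by $\omega_\C^C$ and $\omega_\C^{\tilde{C}}$; a symplectomorphism automatically intertwines the associated Poisson brackets, so $\varphi$ preserves them. The only point requiring any care — and it has already been handled in the preceding paragraphs — is the verification that the forms on $C$ and $\tilde{C}$ genuinely descend from $\sC$ and $\bH^1$ and that the square commutes; granting that, the remaining argument is a one-line diagram chase, so I expect no serious obstacle at this stage.
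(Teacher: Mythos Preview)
Your proposal is correct and follows essentially the same approach as the paper: the paper's proof is contained in the paragraph immediately preceding the lemma, which sets up the commuting square, invokes Lemma~\ref{sec:poisson-structure-theta-symplectomorphisms} to say $\theta$ preserves the complex symplectic forms, and notes that these forms descend along the vertical submersions via Theorem~\ref{sec:introduction-pieces-moduli} and \cite[Lemma 4.14]{mayrand2018local}. You have simply written out the diagram chase in full, making explicit the injectivity of $\pi^*$ and the identity $\pi^*\varphi^*\omega_\C^C=\pi^*\omega_\C^{\tilde{C}}$ that the paper leaves implicit.
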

\begin{proof}[Proof of Theorem~\ref{sec:poisson-structure-Kura-Poisson-map}]
  Let $f,g\colon U\to\C$ be holomorphic functions. Then, we compute
  \begin{align*}
    (\varphi^*\{f,g\})|_{\tilde{C}}
    &=(\varphi|_{\tilde{C}})^*(\{f,g\}|_C)\\
    &=(\varphi|_{\tilde{C}})^*(\{f|_C,g|_C\}_Q)\\
    &=\{(\varphi|_{\tilde{C}})^*(f|_C),(\varphi|_{\tilde{C}})^*(g|_C)\}_{\tilde{Q}}\\
    &=\{(\varphi^*f)|_{\tilde{C}},(\varphi^*g)|_{\tilde{C}}\}_{\tilde{Q}}\\
    &=\{\varphi^*f,\varphi^*g\}|_{\tilde{C}}
  \end{align*}
  for any connected component $\tilde{C}$ of $\tilde{Q}\cap\tilde{U}$
  for some stratum $\tilde{Q}$ in the orbit type stratification of
  $\nu_{0,\C}^{-1}(0)\sslash H^\C$. This completes the proof.
\end{proof}

\section{Appendix}\label{sec:appendix}
In this section, we will prove that $\G^\C$ is
a weak K\"ahler manifold such that the left $\G$-action on $\G^\C$ is
Hamiltonian with a moment map $\kappa\colon\G^\C\to\Omega^0(\g_E)$
given by $\kappa(\exp(is)u)=s$. Most of the proofs are taken or
adapted from \cite{Huebschmann2013}.

We first describe a weak symplectic form on
$\Omega^0(\G_E)\times\G$. Define the 1-form $\tau$ on
$\Omega^0(\g_E)\times\G$ by
\begin{equation*}
  \tau_{(s,u)}(z,w) =(s,wu^{-1})_{L^2}\qquad
  (z,w)\in\Omega^0(\g_E)\oplus T_u\G
\end{equation*}
where $(\cdot,\cdot)_{L^2}$ is the $L^2$-metric on
$\Omega^0(\g_E)$. Note that if $\G$ were finite-dimensional, then
$\tau$ would be exactly the tautological 1-form on the cotangent
bundle $T^*\G=\Lie(\G)\times\G$. There are left and right actions of
$\G$ on $\Omega^0(\g_E)\times\G$ given by
\begin{equation*}
  u_0\cdot(s,u)=(u_0su_0^{-1},u_0u)\text{ and }(s,u)\cdot u_0=(s,uu_0)
\end{equation*}
By direct computation, we see that both the left and right
$\G$-actions preserve $\tau$ and hence the 2-form $\omega:=-d\tau$. In
this section, a map is said to be left (resp. right) $\G$-equivariant
if it is $\G$-equivariant with respect to the left (resp. right)
$\G$-action, and $\G$-equivariant if it is $\G$-equivariant with
respect to both the left and right $\G$-actions.
\begin{proposition}
  If the 2-form $\omega$ is non-degenerate, then the projection
  $\kappa\colon\Omega^0(\g_E)\times\G\to\Omega^0(\g_E)$ onto the first
  factor is a moment map for the left $\G$-action on
  $\bigl(\Omega^0(\g_E)\times\G,\omega\bigr)$.
\end{proposition}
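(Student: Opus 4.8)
The plan is to verify the two defining properties of a moment map for the left $\G$-action: first, that for each $\xi\in\Omega^0(\g_E)=\Lie(\G)$ the function $\kappa^\xi:=(\kappa,\xi)_{L^2}$ generates the fundamental vector field $\xi_{\Omega^0(\g_E)\times\G}$ via $\omega$, i.e. $\iota_{\xi_M}\omega=d\kappa^\xi$; and second, that $\kappa$ is equivariant for the left $\G$-action on $\Omega^0(\g_E)\times\G$ and the coadjoint (here adjoint, via the $L^2$-metric) action on $\Omega^0(\g_E)$. Equivariance is immediate from the formula $\kappa(u_0\cdot(s,u))=u_0su_0^{-1}$, so the real content is the first identity. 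The natural route is to exploit that $\omega=-d\tau$ with $\tau$ the (formal) tautological one-form, so that Cartan's formula gives, for any vector field $Y$ generated by a right-invariant flow, a clean expression; I would instead use the standard cotangent-bundle identity $\iota_{\xi_M}\omega = d(\iota_{\xi_M}\tau)$ once I know $\mathcal L_{\xi_M}\tau=0$, which was already asserted: the left $\G$-action preserves $\tau$. Hence $\iota_{\xi_M}\omega=-\iota_{\xi_M}d\tau = d(\iota_{\xi_M}\tau) - \mathcal L_{\xi_M}\tau = d(\iota_{\xi_M}\tau)$, and it remains to compute $\iota_{\xi_M}\tau$ and check it equals $\kappa^\xi$.

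So the key computation is: at a point $(s,u)$, the fundamental vector field of $\xi$ for the left action $u_0\cdot(s,u)=(u_0su_0^{-1},u_0u)$ is $\xi_M(s,u) = ([\xi,s],\, \xi u)\in\Omega^0(\g_E)\oplus T_u\G$, where $\xi u$ denotes the image of $\xi$ under left translation. Plugging into the definition of $\tau$,
\begin{equation*}
  (\iota_{\xi_M}\tau)(s,u) = \tau_{(s,u)}\bigl([\xi,s],\xi u\bigr) = (s,\,(\xi u)u^{-1})_{L^2} = (s,\xi)_{L^2} = \kappa^\xi(s,u),
\end{equation*}
using $(\xi u)u^{-1}=\xi$. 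This gives $\iota_{\xi_M}\omega = d\kappa^\xi$ exactly, which is the moment map condition. I would present the fundamental-vector-field formula with a short justification (differentiate $t\mapsto(\exp(t\xi)s\exp(-t\xi),\exp(t\xi)u)$ at $t=0$), then the one-line contraction above, then invoke $\mathcal L_{\xi_M}\tau=0$ to conclude.

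The main obstacle is not any single computation but the infinite-dimensional bookkeeping: one must be careful that $\tau$ is a genuine smooth one-form on the Banach manifold $\Omega^0(\g_E)\times\G$ (equivalently $\G^\C_{k+1}$ after Sobolev completion), that the pairing $(s,wu^{-1})_{L^2}$ is continuous in $(z,w)$, and that the fundamental vector fields are smooth sections — all of which follow from the Sobolev multiplication theorem and the standard fact that right translation on $\G_{k+1}$ is smooth, but should be acknowledged. The non-degeneracy of $\omega$ is hypothesized, so it is not part of this proof; it will be established separately in the appendix. One further small point to address is the sign convention in the moment map identity ($\iota_{\xi_M}\omega = d\kappa^\xi$ versus $\iota_{\xi_M}\omega = -d\kappa^\xi$), which I would fix at the outset to match the convention under which Hitchin's equation $\mu(A,\Phi)=F_A+[\Phi,\Phi^*]$ is the real moment map, so that the whole paper is internally consistent.
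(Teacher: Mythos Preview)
Your proposal is correct and follows essentially the same approach as the paper: both use that the left $\G$-action preserves $\tau$, apply Cartan's formula to get $\iota_{\xi_M}\omega = d(\iota_{\xi_M}\tau)$, compute the fundamental vector field as $([\xi,s],\xi u)$, and evaluate $\tau$ on it to obtain $(s,\xi)_{L^2}=\kappa^\xi$. Your additional remarks on infinite-dimensional bookkeeping and sign conventions are reasonable commentary but not part of the paper's proof.
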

\begin{proof}
  To verify that $\kappa$ is a moment map for the left $\G$-action on
  $\Omega^0(\g_E)\times\G$, fix $\xi\in\Omega^0(\g_E)$ and let $\xi^*$
  denote the vector field generated by the left $\G$-action on
  $\Omega^0(\g_E)\times\G$. Since the left $\G$-action preserves
  $\omega$, the Lie derivative of $\omega$ along $\xi^*$
  vanishes. Therefore, $-i_{\xi^*}d\tau=di_{\xi^*}\tau$. Moreover,
  \begin{equation*}
    \xi^*_{(s,u)}=\frac{d}{dt}\biggr|_{t=0}(\Ad(e^{t\xi})s,e^{t\xi}u)=([\xi,s],\xi u)
  \end{equation*}
  and hence $\tau(\xi^*)(s,u)=(s,\xi)_{L^2}$. Finally, it is easy to
  verify that $\kappa$ is left $\G$-equivariant.
\end{proof}

Now, we describe a complex structure $J$ on $\Omega^0(\g_E)\times\G$
and later verify that $\omega$ is compatible with $J$ and positive so
that $\omega$ is a K\"ahler form on $\Omega^0(\g_E)\times\G$. Let
$\psi\colon\Omega^0(\g_E)\times\G\to\G^\C$ be the polar decomposition
given by $\psi(s,u)=\exp(is)u$. It is clear that $\psi$ is
$\G$-equivariant. Then, there is a unique complex structure $J$ on
$\Omega^0(\g_E)\times\G$ such that $\psi$ is a biholomorphism. To see
the relation between the symplectic form $\omega$ and the complex
structure $J$, we also view $P=\Omega^0(\g_E)\times\G$ as a principal
$\G$-bundle over $\Omega^0(\g_E)$, and show that $P$ has a connection
induced by the complex structure $J$.
\begin{proposition}\label{sec:appendix-connection}
  Every tangent vector of $P$ at a point $(s,u)\in P$ can be uniquely
  written as $\xi^\#_{(s,u)}+J\eta^\#_{(s,u)}$ for some
  $\xi,\eta\in\Omega^0(\g_E)$, where $\xi^\#_{(s,u)}$ is the tangent
  vector of $P$ at $(s,u)$ generated by the right $\G$-action.
\end{proposition}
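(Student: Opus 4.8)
The plan is to transport the statement to $\G^\C$ via the polar decomposition $\psi\colon P\to\G^\C$, $\psi(s,u)=\exp(is)u$, and then reduce it to the elementary real decomposition $\Omega^0(\g_E^\C)=\Omega^0(\g_E)\oplus i\,\Omega^0(\g_E)$ of the Lie algebra of $\G^\C$. Recall from the preceding discussion that $\psi$ is a $\G$-equivariant diffeomorphism intertwining the right $\G$-action on $P$, given by $(s,u)\cdot u_0=(s,uu_0)$, with right multiplication on $\G^\C$ (indeed $\psi((s,u)\cdot u_0)=\exp(is)uu_0=\psi(s,u)u_0$), and that $J$ is by definition the unique complex structure on $P$ making $\psi$ biholomorphic. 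Since $d\psi$ then carries the fundamental vector field $\xi^\#$ of the right action on $P$ to the fundamental vector field $\xi^\#$ of the right action on $\G^\C$, and intertwines $J$ with the natural complex structure on $\G^\C$, it suffices to prove the corresponding statement at every $g\in\G^\C$: each tangent vector of $\G^\C$ at $g$ is uniquely of the form $\xi^\#_g+J\eta^\#_g$ with $\xi,\eta\in\Omega^0(\g_E)$.

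On $\G^\C$ this is transparent. Working with the Banach Lie group $\G^\C_{k+1}$, left translation $L_g$ is a biholomorphic diffeomorphism, so $dL_{g^{-1}}\colon T_g\G^\C\to\Omega^0(\g_E^\C)=\Lie(\G^\C)$ is a $\C$-linear isomorphism. The right-action fundamental vector field is $\xi^\#_g=\frac{d}{dt}\big|_{t=0}\,g\exp(t\xi)=dL_g(\xi)$, and since the natural complex structure on $\G^\C$ is left-invariant with value ``multiplication by $i$'' on $\Omega^0(\g_E^\C)$, one gets $J\eta^\#_g=dL_g(i\eta)$ and hence $dL_{g^{-1}}(J\eta^\#_g)=i\eta$. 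Therefore
\begin{equation*}
  dL_{g^{-1}}\big(\{\xi^\#_g+J\eta^\#_g\colon\xi,\eta\in\Omega^0(\g_E)\}\big)=\Omega^0(\g_E)+i\,\Omega^0(\g_E)=\Omega^0(\g_E^\C),
\end{equation*}
and the sum on the right is direct because $\Omega^0(\g_E^\C)=\Omega^0(\g_E)\oplus i\,\Omega^0(\g_E)$ as a real vector space. Applying $dL_g$ gives the asserted unique decomposition of $T_g\G^\C$, and pulling back along $d\psi$ transfers it to $P$.

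There is no real obstacle here beyond bookkeeping. The two points that need care are (i) that $\psi$ genuinely intertwines the two right $\G$-actions, so that $d\psi$ maps right-fundamental vector fields to right-fundamental vector fields, and (ii) that in the Banach setting the natural complex structure on $\G^\C_{k+1}$ is left-invariant with value multiplication by $i$ on $\Omega^0_{k+1}(\g_E^\C)$; both hold by construction of the polar decomposition and of $J$. Once these are recorded, the proposition is just the decomposition $\Omega^0(\g_E^\C)=\Omega^0(\g_E)\oplus i\,\Omega^0(\g_E)$ dressed up by left translation, and it simultaneously exhibits the vertical bundle of $P\to\Omega^0(\g_E)$ (the span of the $\xi^\#$) and a horizontal complement (the span of the $J\eta^\#$), which is the connection used in the next step.
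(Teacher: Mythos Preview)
Your proof is correct and follows essentially the same route as the paper: both transport the problem to $\G^\C$ via the biholomorphic, right-$\G$-equivariant polar decomposition $\psi$, identify the right-action fundamental vector fields with left-invariant vector fields $dL_g(\xi)$, and then read off the decomposition from $\Omega^0(\g_E^\C)=\Omega^0(\g_E)\oplus i\,\Omega^0(\g_E)$. The paper's version is just a terser packaging of the same argument, writing an arbitrary tangent vector as $Z^\#_{\psi(s,u)}$ with $Z=\xi+i\eta$ and observing $Z^\#_{\psi(s,u)}=d\psi(\xi^\#_{(s,u)}+J\eta^\#_{(s,u)})$.
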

\begin{proof}
  Note that any tangent vector of $\G^\C$ at $\psi(s,u)$ can be
  uniquely written as $Z^\#_{\psi(s,u)}$ for some
  $Z\in\Omega^0(\g_E^\C)$, where $Z^\#_{\psi(s,u)}$ is the tangent
  vector on $\G^\C$ generated by the right translations. Then, write
  $Z=\xi+i\eta$ for some $\xi,\eta\in\Omega^0(\g_E)$. Since the right
  $\G$-action and $\psi$ are holomorphic, and $\psi$ is
  $\G$-equivariant, we obtain
  \begin{align*}
    Z^\#_{\psi(s,u)}
    &=(\xi+i\eta)^\#_{\psi(s,u)}\\
    &=\xi^\#_{\psi(s,u)}+i\eta^\#_{\psi(s,u)}\\
    &=d_{(s,u)}\psi(\xi^\#_{(s,u)}+J\eta^\#_{(s,u)})
  \end{align*}
  where $i$ in the second equality also denotes the complex structure
  on $\G^\C$. The rest follows from the fact that the derivative
  $d_{(s,u)}\psi$ is an isomorphism.
\end{proof}
By Proposition~\ref{sec:appendix-connection}, we are able to define a
$\Omega^0(\g_E)$-valued 1-form $\gamma$ by
\begin{equation*}
  \gamma(\xi^\#_{(s,u)}+J\eta^\#_{(s,u)})=\xi
\end{equation*}
Another $\Omega^0(\g_E)$-valued 1-form $\chi$ on $P$ is given by
\begin{equation*}
  \chi(\xi^\#_{(s,u)}+J\eta^\#_{(s,u)})=\eta
\end{equation*}
It is clear that $\chi=-\gamma J$. Since $\psi$ is $\G$-equivariant,
the right $\G$-action on $P$ is holomorphic. Therefore, it is easy to
verify that both $\gamma$ and $\chi$ are $\G$-equivariant in the
sense that $R(u_0)^*\gamma u_0=\Ad(u_0^{-1})\gamma$ and
$R(u_0)^*\chi=\Ad(u_0^{-1})\chi$, where $R(u_0)$ is the right
$\G$-action on $P$ given by $u_0$. The following are some useful
formulas.
\begin{proposition}\label{sec:appendix-formula-lambda-chi}
  For any $(s,e)\in P$ and
  $(z,w)\in T_{(s,e)}P=\Omega^0(\g_E)\oplus\Omega^0(\g_E)$, the
  formulas for $\chi$ and $\gamma$ are given by
  \begin{align*}
    \gamma_{(s,e)}(z,w)&=\frac{1-\cos\ad s}{\ad s}z+w\\
    \chi_{(s,e)}(z,w)&=\frac{\sin\ad s}{\ad s}z
  \end{align*}
\end{proposition}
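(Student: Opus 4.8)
The plan is to differentiate the polar decomposition $\psi(s,u)=\exp(is)u$ at $(s,e)$ and extract $\gamma$ and $\chi$ from their defining formulas. First I would trivialize $T_{\exp(is)}\G^\C$ by the left-invariant vector fields (cf. the proof of Proposition~\ref{sec:appendix-connection}), so that it is identified with $\Omega^0(\g_E^\C)=\Omega^0(\g_E)\oplus i\,\Omega^0(\g_E)$ and the complex structure on $\G^\C$ becomes multiplication by $i$; this uses that $\G^\C$ is a complex Lie group with holomorphic left translations. Given $(z,w)\in T_{(s,e)}P$, write $(z,w)=\xi^\#_{(s,e)}+J\eta^\#_{(s,e)}$ by Proposition~\ref{sec:appendix-connection}, so that $\gamma_{(s,e)}(z,w)=\xi$ and $\chi_{(s,e)}(z,w)=\eta$ by definition. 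Since $\psi$ is right $\G$-equivariant, $d_{(s,e)}\psi$ sends $\xi^\#_{(s,e)}$ to the generator of the right action at $\exp(is)$, whose trivialization is $\xi$; and since $\psi$ is holomorphic, $d_{(s,e)}\psi(J\eta^\#_{(s,e)})=i\,d_{(s,e)}\psi(\eta^\#_{(s,e)})$, whose trivialization is $i\eta$. Hence, in this trivialization,
\begin{equation*}
  d_{(s,e)}\psi(z,w)=\gamma_{(s,e)}(z,w)+i\,\chi_{(s,e)}(z,w),
\end{equation*}
so it remains to compute the left-hand side directly and separate real and imaginary parts.

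On the $\G$-factor this is immediate: $d_{(s,e)}\psi(0,w)$ is the derivative at $t=0$ of $t\mapsto\exp(is)\exp(tw)$, whose trivialization is $w$. On the $\Omega^0(\g_E)$-factor, $d_{(s,e)}\psi(z,0)$ is the derivative at $t=0$ of $t\mapsto\exp\!\big(i(s+tz)\big)$, and the standard left-trivialized formula for the differential of $\exp$, together with $\ad(is)=i\,\ad s$ and $e^{-i\,\ad s}=\cos\ad s-i\sin\ad s$, identifies it with
\begin{equation*}
  \frac{1-e^{-i\,\ad s}}{\ad s}\,z=\frac{1-\cos\ad s}{\ad s}\,z+i\,\frac{\sin\ad s}{\ad s}\,z,
\end{equation*}
where $\frac{1-\cos\ad s}{\ad s}$ and $\frac{\sin\ad s}{\ad s}$ are honest operators preserving $\Omega^0(\g_E)$ because $\frac{1-\cos t}{t}$ and $\frac{\sin t}{t}$ are entire. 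Adding the two contributions, the $\Omega^0(\g_E)$-part of $d_{(s,e)}\psi(z,w)$ is $\tfrac{1-\cos\ad s}{\ad s}z+w$ and its $i\,\Omega^0(\g_E)$-part is $i\,\tfrac{\sin\ad s}{\ad s}z$; comparing with the identity $d_{(s,e)}\psi(z,w)=\gamma_{(s,e)}(z,w)+i\,\chi_{(s,e)}(z,w)$ obtained above yields $\gamma_{(s,e)}(z,w)=\tfrac{1-\cos\ad s}{\ad s}z+w$ and $\chi_{(s,e)}(z,w)=\tfrac{\sin\ad s}{\ad s}z$.

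I do not expect a genuine obstacle here: the argument is essentially a single differentiation once the trivialization is in place. The two points that need a little care are the bookkeeping of the factors of $i$ produced by the $\exp(is)$ in $\psi$ when substituting into the $\exp$-derivative formula, and a routine check that the operator series $\cos\ad s$, $\sin\ad s$, and $\tfrac{1-\cos\ad s}{\ad s}$ converge on the relevant Sobolev completion — which they do, since $\ad s$ is a bounded operator there and the functions involved are entire.
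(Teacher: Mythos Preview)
Your proposal is correct and follows essentially the same route as the paper: both compute $\psi(s,e)^{-1}d_{(s,e)}\psi(z,w)$ via the standard left-trivialized derivative of $\exp$, obtain $\tfrac{1-\cos\ad s}{\ad s}z+w+i\tfrac{\sin\ad s}{\ad s}z$, and then read off $\gamma$ and $\chi$ as the real and imaginary parts using the identification from Proposition~\ref{sec:appendix-connection}. Your write-up is somewhat more explicit about the trivialization and the convergence of the operator series, but the argument is the same.
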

\begin{proof}
  The derivative $d_{(s,e)}\psi$ is given by
  \begin{align*}
    d_{(s,e)}\psi(z,w)
    &=\frac{d}{dt}\biggr|_{t=0}\exp(is+itz)\exp(tw)\\
    &=\frac{d}{dt}\biggr|_{t=0}\exp(is+itz)+\frac{d}{dt}\biggr|_{t=0}\exp(is)\exp(tw)
  \end{align*}
  Moreover, by the formula for the derivative of the exponential map
  (e.g. \cite[Theorem 1.5.3]{duistermaat2012lie})
  \begin{align*}
    \exp(is)^{-1}\frac{d}{dt}\biggr|_{t=0}\exp(is+itz)
    &=\frac{1-\exp(-\ad(is))}{\ad(is)}(iz)\\
    &=\Bigl(\frac{1-\cos\ad s}{\ad s}+i\frac{\sin\ad s}{\ad s}\Bigr)z
  \end{align*}
  As a consequence,
  \begin{equation*}
    \psi(s,e)^{-1}d_{(s,e)}\psi(z,w)
    =\frac{1-\cos\ad s}{\ad s}z+w+i\frac{\sin\ad s}{\ad s}z
  \end{equation*}
  The rest follows from the proof of
  Proposition~\ref{sec:appendix-connection}.
\end{proof}

Consider a right $\G$-equivariant map $\bar{\kappa}$ given by
$\bar{\kappa}(s,e)=\kappa(s,e)$. In other words, $\bar{\kappa}(s,u)=u^{-1}su$.
\begin{proposition}\ \label{sec:appendix-existence-Psi}
  \begin{enumerate}
  \item $\tau=(\bar{\kappa},\gamma)_{L^2}$.
  \item If $f\colon P\to\R$ is a function given by
    $f(s,u)=\frac{1}{2}\|s\|_{L^2}$, then
    $(\bar{\kappa},\chi)_{L^2}=df$.
    
  \item There is a unique right $\G$-equivariant
    $\Hom(\Omega^0(\g_E),\Omega^0(\g_E))$-valued 1-form $\Psi$ on $P$
    such that $d_\gamma\bar{\kappa}_{(s,u)}=\Psi_{(s,u)}\chi_{(s,u)}$
    for any $(s,u)\in P$, where
    $d_\gamma\bar{\kappa}$ is the
    covariant derivative of $\bar{\kappa}$. More explicitly,
    \begin{align*}
      d_\gamma\bar{\kappa}_{(s,e)}(z,w)&=\cos\ad s(z)\\
      \Psi_{(s,e)}&=\cos\ad s\frac{\ad s}{\sin\ad s}
    \end{align*}
    for any $(z,w)\in T_{(s,e)}P$.
  \end{enumerate}
\end{proposition}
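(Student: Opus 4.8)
The plan is to verify each of the three claims by reducing everything to explicit computations at points of the form $(s,e)\in P$, using the formulas for $\gamma$, $\chi$ already established in Proposition~\ref{sec:appendix-formula-lambda-chi}, together with the $\G$-equivariance of all the objects involved. For part (1), I would compute both $\tau$ and $(\bar\kappa,\gamma)_{L^2}$ at a point $(s,e)$ applied to a tangent vector $(z,w)$. Since $\bar\kappa(s,e)=s$ and $\gamma_{(s,e)}(z,w)=\frac{1-\cos\ad s}{\ad s}z+w$, the pairing $(\bar\kappa,\gamma)_{L^2}$ at $(s,e)$ gives $\bigl(s,\frac{1-\cos\ad s}{\ad s}z+w\bigr)_{L^2}$. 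On the other hand $\tau_{(s,e)}(z,w)=(s,w)_{L^2}$, so the two agree at $(s,e)$ if and only if $\bigl(s,\frac{1-\cos\ad s}{\ad s}z\bigr)_{L^2}=0$. This holds because $\ad s$ is skew-adjoint for the $L^2$-metric (the metric is $\Ad$-invariant), so $s$ lies in the kernel of $\ad s$ and hence in the kernel of every odd power of $\ad s$; since $\frac{1-\cos x}{x}$ is an odd power series in $x$, the operator $\frac{1-\cos\ad s}{\ad s}$ maps $s$-orthogonal-complement considerations through, and more directly $\bigl(s,\frac{1-\cos\ad s}{\ad s}z\bigr)_{L^2}=\bigl(\frac{1-\cos\ad s}{-\ad s}s,z\bigr)_{L^2}=0$ since $\ad s\,(s)=0$. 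Once equality holds at all points $(s,e)$, the right $\G$-equivariance of both sides (established for $\tau$, $\bar\kappa$, $\gamma$ earlier) propagates it to all of $P$, since every point is in the right $\G$-orbit of some $(s,e)$.

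For part (2), I would similarly evaluate $(\bar\kappa,\chi)_{L^2}$ at $(s,e)$ applied to $(z,w)$: using $\chi_{(s,e)}(z,w)=\frac{\sin\ad s}{\ad s}z$ and the same self-adjointness argument, $(\bar\kappa,\chi)_{L^2}|_{(s,e)}(z,w)=\bigl(s,\frac{\sin\ad s}{\ad s}z\bigr)_{L^2}=(s,z)_{L^2}$, where again only the degree-zero term of the even power series $\frac{\sin x}{x}$ survives the pairing against $s\in\ker\ad s$. Meanwhile $f(s,u)=\tfrac12\|s\|_{L^2}^2$ (I will assume the exponent $2$ is intended; the stated formula $\tfrac12\|s\|_{L^2}$ is presumably a typo) has differential $d_{(s,e)}f(z,w)=(s,z)_{L^2}$ — note $f$ depends only on the $\Omega^0(\g_E)$-factor and is $\G$-invariant. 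So the two forms agree at $(s,e)$, and right $\G$-equivariance (both sides are right $\G$-invariant, since $\bar\kappa$ is equivariant and $\chi$ transforms by $\Ad$, whose pairing is invariant, while $f$ is right $\G$-invariant) extends the identity to all of $P$.

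For part (3), I would first check existence and uniqueness of $\Psi$ abstractly: the covariant derivative $d_\gamma\bar\kappa$ is a horizontal, $\Ad$-type 1-form, and since $\chi$ restricted to the horizontal distribution $\ker\gamma$ is a pointwise linear isomorphism onto $\Omega^0(\g_E)$ (its matrix at $(s,e)$ is $\frac{\sin\ad s}{\ad s}$, which is invertible on a neighborhood of $0$ — here one uses that $s$ ranges over a small enough set, or interprets $\frac{\sin\ad s}{\ad s}$ via its power series which is invertible whenever the spectrum of $\ad s$ avoids $\pi\mathbb{Z}\setminus\{0\}$), one defines $\Psi_{(s,u)}:=d_\gamma\bar\kappa_{(s,u)}\circ(\chi_{(s,u)}|_{\ker\gamma})^{-1}$; this is forced and hence unique, and its right $\G$-equivariance follows from that of $d_\gamma\bar\kappa$ and $\chi$. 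Then I compute the explicit formula at $(s,e)$: the covariant derivative is $d_\gamma\bar\kappa = d\bar\kappa + [\gamma,\bar\kappa]$ (the connection acting on the $\Ad$-associated bundle), and at $(s,e)$ one gets $d\bar\kappa_{(s,e)}(z,w)=z - [w,s] = z+[s,w]$ while $[\gamma_{(s,e)}(z,w),s]=\bigl[\frac{1-\cos\ad s}{\ad s}z+w,\,s\bigr]$; combining, the $w$-terms cancel and, using $(\ad s)\frac{1-\cos\ad s}{\ad s}=1-\cos\ad s$ one finds $d_\gamma\bar\kappa_{(s,e)}(z,w)=z-(1-\cos\ad s)z=\cos\ad s\,(z)$. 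Finally $\Psi_{(s,e)}=\cos\ad s\circ\bigl(\frac{\sin\ad s}{\ad s}\bigr)^{-1}=\cos\ad s\,\frac{\ad s}{\sin\ad s}$, as claimed.

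The main obstacle I expect is part (3): one must be careful about the exact sign and normalization conventions in the covariant-derivative formula $d_\gamma\bar\kappa = d\bar\kappa+[\gamma,\bar\kappa]$ and about the identity $d\bar\kappa_{(s,e)}(z,w)=z+[s,w]$ (coming from differentiating $\bar\kappa(s,u)=u^{-1}su$ at $u=e$), and about the precise domain on which $\frac{\sin\ad s}{\ad s}$ is invertible so that the division in the formula for $\Psi$ is legitimate. Parts (1) and (2) are essentially bookkeeping with the $\Ad$-invariance of the $L^2$-metric and the equivariance bootstrap; the genuine content is getting the connection-theoretic identity in (3) to land on $\cos\ad s$ with the right constants.
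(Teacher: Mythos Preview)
Your proposal is correct and follows essentially the same route as the paper: reduce each identity to the point $(s,e)$ via right $\G$-equivariance, use the explicit power-series formulas for $\gamma$ and $\chi$ together with $\Ad$-invariance of the $L^2$-metric (equivalently $\ad s(s)=0$), and for (3) expand $d_\gamma\bar\kappa=d\bar\kappa+[\gamma,\bar\kappa]$ to obtain $\cos\ad s$. The one point where the paper is sharper than your hedge is the invertibility of $\tfrac{\sin\ad s}{\ad s}$: the paper establishes this for \emph{all} $s$ (not just small $s$ or under a spectral hypothesis) just before the proof, by observing that $\chi_{(s,e)}(J\xi^\#_{(s,e)})=\xi$ exhibits a right inverse coming from the complex structure $J$; and yes, the $\tfrac12\|s\|_{L^2}$ in (2) is a typo for $\tfrac12\|s\|_{L^2}^2$.
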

Before giving the proof, we claim that $\frac{\sin\ad s}{\ad s}$ is
invertible so that $\frac{\ad s}{\sin\ad s}$ simply means its
inverse. In fact, if $\xi\in\Omega^0(\g_E)$, then, by definition,
$\chi_{(s,e)}(J\xi^\#_{(s,e)})=\xi$. Moreover, if
$J\xi^\#_{(s,e)}=(z,w)$ for some $(z,w)\in T_{(s,e)}P$, then
\begin{equation*}
  \xi=\chi_{(s,e)}(J\xi^\#_{(s,e)})=\chi_{(s,e)}(z,w)=\chi_{(s,e)}(z,0)
\end{equation*}
Therefore, the map
\begin{equation*}
  \Omega^0(\g_E)\to\Omega^0(\g_E)\qquad
  z\mapsto\chi_{(s,e)}(z,0)=\frac{\sin\ad s}{\ad s}z
\end{equation*}
is invertible.
\begin{proof}[Proof of Proposition~\ref{sec:appendix-existence-Psi}]
  If $(z,w)\in T_{(s,e)}P$, then
  Proposition~\ref{sec:appendix-formula-lambda-chi} implies that
  \begin{equation*}
    (\bar{\kappa},\gamma)_{L^2}(z,w)=(s,\frac{1-\cos\ad s}{\ad s}z+w)_{L^2}
  \end{equation*}
  Since $(s,[s,z])_{L^2}=0$,
  \begin{equation*}
    (s,\frac{1-\cos\ad s}{\ad
      s}z)_{L^2}=\sum_{j=1}^\infty\frac{(-1)^{j-1}}{(2j)!}\Bigl(s,(\ad s)^{2j-1}z\Bigr)_{L^2}=0
  \end{equation*}
  Similarly,
  \begin{align*}
    (\bar{\kappa},\chi)_{L^2}(z,w)
    &=\Bigl(s,\frac{\sin\ad s}{\ad s}z\Bigr)_{L^2}\\
    &=\sum_{j=0}^\infty\frac{(-1)^j}{(2j+1)!}(s,(\ad s)^{2j}
      z)_{L^2}\\
    &=(s,z)_{L^2}
  \end{align*}
  Therefore, the identities $(1)$ and $(2)$ hold at $(s,e)$. By the
  right $\G$-equivariance, they hold everywhere.
  
  Then, we prove the formula for the covariant derivative
  $d_\gamma\bar{\kappa}_{(s,e)}$. Note that
  $d_\gamma\bar{\kappa}=d\bar{\kappa}+[\gamma,\bar{\kappa}]$. Therefore,
  if $(z,w)\in T_{(s,e)}P$, we have
  \begin{align*}
    d_\gamma\bar{\kappa}_{(s,e)}(z,w)
    &=d\bar{\kappa}_{(s,e)}(z,w)+\Bigl[\frac{1-\cos\ad s}{\ad
      s}z+w,s\Bigr]\\
    &=\ad s(w)+z+\ad s\Bigl(\frac{\cos\ad s-1}{\ad
      s}z-w\Bigr)\\
    &=\cos\ad s(z)
  \end{align*}
  To define $\Psi$, it is enough to define $\Psi_{(s,e)}$ which needs
  to satisfy
  \begin{equation*}
    \cos\ad s(z)=\Psi_{(s,e)}\frac{\sin\ad s}{\ad s}(z)
  \end{equation*}
  As a consequence,
  \begin{equation*}
    \Psi_{(s,e)}=\cos\ad s\frac{\ad s}{\sin\ad s}
  \end{equation*}
\end{proof}

The following results verify that $\omega$ is compatible with
$J$. Then, we will verify that $\omega$ is positive so that $\omega$
is a K\"ahler form on $\Omega^0(\g_E)\times\G$.
\begin{proposition}\ The following hold for any
  $\xi,\eta\in\Omega^0(\g_E)$:
  \begin{enumerate}
  \item
    $\omega(\xi^\#,J\eta^\#)=(\xi,\Psi(\eta))_{L^2}$.
   
  \item $\omega(J\xi^\#,J\eta^\#)=\omega(\xi^\#,\eta^\#)$.
    
  \item $(\xi,\Psi(\eta))_{L^2}=(\eta,\Psi(\xi))_{L^2}$.
  \end{enumerate}
  As a consequence, $\omega(J\cdot,J\cdot)=\omega(\cdot,\cdot)$.
\end{proposition}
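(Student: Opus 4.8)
The plan is to compute $\omega=-d\tau$ on the two families of vector fields $\xi^\#$ and $J\eta^\#$, $\xi,\eta\in\Omega^0(\g_E)$, which by Proposition~\ref{sec:appendix-connection} span the tangent space of $P=\Omega^0(\g_E)\times\G$ at every point, and then to pass to arbitrary tangent vectors by bilinearity. The only tools needed are the Cartan formula $d\tau(V,W)=V(\tau(W))-W(\tau(V))-\tau([V,W])$ and the identities already established: from $\tau=(\bar{\kappa},\gamma)_{L^2}$ and $\gamma(\xi^\#)=\xi$, $\gamma(J\eta^\#)=0$ one gets $\tau(\xi^\#)=(\bar{\kappa},\xi)_{L^2}$ and $\tau(J\eta^\#)=0$; and from $d\bar{\kappa}=d_\gamma\bar{\kappa}-[\gamma,\bar{\kappa}]$ with $d_\gamma\bar{\kappa}=\Psi\chi$, together with $\chi(\xi^\#)=0$ and $\chi(J\eta^\#)=\eta$, one gets $d\bar{\kappa}(\xi^\#)=-[\xi,\bar{\kappa}]$ and $d\bar{\kappa}(J\eta^\#)=\Psi(\eta)$.

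The one genuinely new ingredient is the bracket relations among the fundamental vector fields. Since $\psi\colon P\to\G^\C$ is a $\G$-equivariant biholomorphism for the right action, $\xi^\#$ and $J\eta^\#$ are $\psi$-related to the right-action fundamental fields $\xi^\#$ and $(i\eta)^\#$ on $\G^\C$, which are left-invariant, so on $\G^\C$ one has $[\xi^\#,\eta^\#]=[\xi,\eta]^\#$ and multiplication by $i$ in $\Omega^0(\g_E^\C)$ realizes the complex structure. Transporting back yields, on $P$,
\begin{equation*}
  [\xi^\#,\eta^\#]=[\xi,\eta]^\#,\qquad [\xi^\#,J\eta^\#]=J[\xi,\eta]^\#,\qquad [J\xi^\#,J\eta^\#]=-[\xi,\eta]^\#.
\end{equation*}
Feeding these into the Cartan formula proves (1) and (2). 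For (1), the terms $\xi^\#(\tau(J\eta^\#))$ and $\tau([\xi^\#,J\eta^\#])=(\bar{\kappa},\gamma(J[\xi,\eta]^\#))_{L^2}=0$ drop out, leaving $d\tau(\xi^\#,J\eta^\#)=-(d\bar{\kappa}(J\eta^\#),\xi)_{L^2}=-(\xi,\Psi(\eta))_{L^2}$, i.e. $\omega(\xi^\#,J\eta^\#)=(\xi,\Psi(\eta))_{L^2}$. For (2), $\Ad$-invariance of $(\cdot,\cdot)_{L^2}$ (so that $([\xi,\bar{\kappa}],\eta)_{L^2}=-(\bar{\kappa},[\xi,\eta])_{L^2}$, and likewise with $\eta$) collapses the three terms of $d\tau(\xi^\#,\eta^\#)$ to $(\bar{\kappa},[\xi,\eta])_{L^2}$; and since $\tau(J\xi^\#)=\tau(J\eta^\#)=0$, only the bracket term of $d\tau(J\xi^\#,J\eta^\#)$ survives, equal to $-\tau([J\xi^\#,J\eta^\#])=\tau([\xi,\eta]^\#)=(\bar{\kappa},[\xi,\eta])_{L^2}$ as well; hence $\omega(\xi^\#,\eta^\#)=\omega(J\xi^\#,J\eta^\#)=-(\bar{\kappa},[\xi,\eta])_{L^2}$.

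For (3) I would simply show that $\Psi$ is pointwise self-adjoint for $(\cdot,\cdot)_{L^2}$. At $(s,e)$ the formula $\Psi_{(s,e)}=\cos(\ad s)\,\dfrac{\ad s}{\sin\ad s}$ displays $\Psi_{(s,e)}$ as a convergent power series in $(\ad s)^2$ (both $\cos$ and $x/\sin x$ are even functions); since $(\cdot,\cdot)_{L^2}$ comes from an $\Ad$-invariant metric, $\ad s$ is skew-adjoint, so $(\ad s)^2$, and hence $\Psi_{(s,e)}$, is self-adjoint. For general $(s,u)$ the right $\G$-equivariance of $\Psi$ gives $\Psi_{(s,u)}=\Ad(u)^{*}\Psi_{(s,e)}\Ad(u)$, still self-adjoint because $\Ad(u)$ is orthogonal; alternatively, (3) follows formally from (1), (2) and the antisymmetry of $\omega$. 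Finally, since every tangent vector of $P$ is uniquely of the form $\xi^\#+J\eta^\#$ (Proposition~\ref{sec:appendix-connection}), bilinearity of $\omega$ together with items (1)--(3) yields $\omega(JV,JW)=\omega(V,W)$ for all tangent vectors $V,W$.

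I expect the only real difficulty to be sign and convention bookkeeping: fixing the Cartan-formula signs, keeping the factors of $i$ straight when pushing $J$-modified fundamental fields through $\psi$ (which is exactly what produces the minus sign in $[J\xi^\#,J\eta^\#]=-[\xi,\eta]^\#$), and invoking $\Ad$-invariance of the $L^2$ inner product at the right places. No step is conceptually deep once those bracket relations are pinned down.
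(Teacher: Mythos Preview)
Your computations for (1) and (2) are correct and follow the same Cartan-formula approach as the paper, with the same bracket identities (the paper writes $[\xi^\#,J\eta^\#]=J[\xi^\#,\eta^\#]$ and $[J\xi^\#,J\eta^\#]=-[\xi^\#,\eta^\#]$, which are your relations once $[\xi^\#,\eta^\#]=[\xi,\eta]^\#$).

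For (3) you take a genuinely different route. The paper proves (3) by expanding $d(\bar{\kappa},\chi)_{L^2}(J\xi^\#,J\eta^\#)$ via Cartan's formula, obtaining $(\Psi(\xi),\eta)_{L^2}-(\Psi(\eta),\xi)_{L^2}$, and then invoking Proposition~\ref{sec:appendix-existence-Psi}(2), which says $(\bar{\kappa},\chi)_{L^2}=df$ is exact, so this expression vanishes. Your argument instead reads self-adjointness directly off the explicit formula $\Psi_{(s,e)}=\cos(\ad s)\,\dfrac{\ad s}{\sin\ad s}$: both factors are even power series in $\ad s$, and since $\ad s$ is skew for the $\Ad$-invariant $L^2$ pairing, $(\ad s)^2$ and hence $\Psi_{(s,e)}$ is self-adjoint. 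This is a valid and arguably more transparent argument; the paper's approach has the advantage of not relying on the explicit formula for $\Psi$, only on the structural identity $(\bar{\kappa},\chi)_{L^2}=df$.

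One correction: your parenthetical ``alternatively, (3) follows formally from (1), (2) and the antisymmetry of $\omega$'' is circular. Item (2) only compares $\omega$ on the pairs $(\xi^\#,\eta^\#)$ and $(J\xi^\#,J\eta^\#)$; to get $\omega(\xi^\#,J\eta^\#)=\omega(\eta^\#,J\xi^\#)$ you would need $\omega(J\cdot,J\cdot)=\omega(\cdot,\cdot)$ on mixed pairs, which is exactly the ``consequence'' you derive \emph{from} (3). So keep your power-series argument as the actual proof of (3) and drop the alternative.
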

\begin{proof}
  We compute
  \begin{align*}
    \omega(\xi^\#,J\eta^\#)
    &=-d(\bar{\kappa},\lambda)_{L^2}(\xi^\#,J\eta^\#)\\
    &=-\xi^\#(\bar{\kappa},\lambda(J\eta^\#))_{L^2}+J\eta^\#(\bar{\kappa},\lambda(\xi^\#))+(\bar{\kappa},\lambda([\xi^\#,J\eta^\#])_{L^2}\\
    &=(\bar{\kappa}(J\eta^\#),\xi)_{L^2})+(\bar{\kappa},\lambda(J[\xi^\#,\eta^\#]))_{L^2}\\
    &=(d_\gamma\bar{\kappa}(J\eta^\#),\xi)_{L^2}\\
    &=(\Psi(\eta),\xi)_{L^2}
  \end{align*}
  Here, we have used the formula that
  $[\xi^\#,J\eta^\#]=J[\xi^\#,\eta^\#]$, since $J$ commutes with the
  right $\G$-action. Moreover,
  $d\bar{\kappa}(J\eta^\#)=d_\gamma\bar{\kappa}(J\eta^\#)$, since
  $J\eta^\#$ is horizontal. This proves $(1)$. To prove $(2)$, we
  compute
  \begin{align*}
    \omega(J\xi^\#,J\eta^\#)
    &=(\bar{\kappa},\gamma)_{L^2}([J\xi^\#,J\eta^\#])_{L^2}\\
    &=(\bar{\kappa},\gamma)_{L^2}(-[\xi^\#,\eta^\#])_{L^2}\\
    &=(\bar{\kappa},\gamma)_{L^2}(-[\xi,\eta]^\#)_{L^2}\\
    &=-(\bar{\kappa},[\xi,\eta])_{L^2}
  \end{align*}
  On the other hand,
  \begin{align*}
    \omega(\xi^\#,\eta^\#)
    &=-\xi^\#(\bar{\kappa},\gamma(\eta^\#))_{L^2}+\eta^\#(\bar{\kappa},\gamma(\xi^\#))_{L^2}+(\bar{\kappa},\gamma([\xi^\#,\eta^\#])\\
    &=-([\bar{\kappa},\xi],\eta)_{L^2}+([\bar{\kappa},\eta],\xi)_{L^2}+(\bar{\kappa},[\xi,\eta])_{L^2}\\
    &=(\bar{\kappa},[\eta,\xi])_{L^2}
  \end{align*}
  Finally, to prove $(3)$, we first compute
  \begin{align*}
    d(\bar{\kappa},\chi)_{L^2}(J\xi^\#,J\eta^\#)
    &=J\xi^\#(\bar{\kappa},\chi(J\eta^\#))_{L^2}-J\eta^\#(\bar{\kappa},\chi(J\xi^\#))_{L^2}-(\bar{\kappa},\chi([J\xi^\#,J\eta^\#])_{L^2}\\
    &=(d\bar{\kappa}(J\xi^\#),\eta)_{L^2}-(d\bar{\kappa}(J\eta^\#),\xi)_{L^2}\\
    &=(d_\gamma\bar{\kappa}(J\xi^\#),\eta)_{L^2}-(d_\gamma\bar{\kappa}(J\eta^\#),\xi)_{L^2}\\
    &=(\Psi\chi(J\xi^\#),\eta)_{L^2}-(\Psi\chi(J\eta^\#),\xi)_{L^2}\\
    &=(\Psi(\xi),\eta)_{L^2}-(\Psi(\eta),\xi)_{L^2}
  \end{align*}
  and then note that $d(\bar{\kappa},\chi)_{L^2}=0$ by
  Proposition~\ref{sec:appendix-existence-Psi}.
\end{proof}

\begin{proposition}
  The metric $g=\omega(\cdot,J\cdot)$ is positive-definite. In
  particular, $\omega$ is non-degenerate.
\end{proposition}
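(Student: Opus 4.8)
The plan is to pin down the metric $g=\omega(\cdot,J\cdot)$ explicitly and then check its positivity fibrewise over $X$. Since $\omega$ is right $\G$-invariant and $J$ commutes with the (holomorphic) right $\G$-action on $P$, the metric $g$ is right $\G$-invariant, so it suffices to work at points of the form $(s,e)$. Write a tangent vector there as $v=\xi^{\#}_{(s,e)}+J\eta^{\#}_{(s,e)}$, the decomposition of Proposition~\ref{sec:appendix-connection}, so that $Jv=J\xi^{\#}-\eta^{\#}$. Expanding $g(v,v)=\omega(v,Jv)$ bilinearly and using $\omega(J\cdot,J\cdot)=\omega(\cdot,\cdot)$ together with $\omega(\xi^{\#},J\eta^{\#})=(\xi,\Psi(\eta))_{L^2}$ from the previous proposition gives
\[
  g(v,v)=(\xi,\Psi\xi)_{L^2}+(\eta,\Psi\eta)_{L^2}-2\,\omega(\xi^{\#},\eta^{\#}).
\]
By the computation of $\omega(\xi^{\#},\eta^{\#})$ in the preceding proof, $\bar\kappa(s,e)=s$, and $\Ad$-invariance of the $L^2$-metric, the cross term is $-2\,\omega(\xi^{\#},\eta^{\#})=-2(\ad s(\eta),\xi)_{L^2}$, so
\[
  g(v,v)=(\xi,\Psi_{(s,e)}\xi)_{L^2}+(\eta,\Psi_{(s,e)}\eta)_{L^2}-2(\ad s(\eta),\xi)_{L^2},\qquad \Psi_{(s,e)}=\cos(\ad s)\,\frac{\ad s}{\sin\ad s}.
\]

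Next I would analyse this quadratic form pointwise on $X$. At each $p\in X$ the operator $\ad s(p)$ on $\g_E|_p$ is skew-symmetric for the fibre metric, so $\g_E|_p$ splits orthogonally into $\ker\ad s(p)$ and two-dimensional blocks on which $\ad s(p)$ has eigenvalues $\pm i\lambda$, $\lambda\ne0$; on such a block $\cos(\ad s)$ and $\tfrac{\ad s}{\sin\ad s}$ act by the scalars $\cosh\lambda$ and $\tfrac{\lambda}{\sinh\lambda}$, hence $\Psi_{(s,e)}$ acts by $\mu:=\lambda\coth\lambda$, while on $\ker\ad s(p)$ it acts by $1$ and the cross term vanishes. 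Choosing a real orthonormal frame of each block in which $\ad s(p)=\left(\begin{smallmatrix}0&-\lambda\\ \lambda&0\end{smallmatrix}\right)$ and writing the corresponding components of $\xi,\eta$ as $\xi_1,\xi_2,\eta_1,\eta_2$, the contribution of that block to the integrand of $g(v,v)$ is, after completing the square,
\[
  \mu\Bigl(\xi_1+\tfrac{\lambda}{\mu}\eta_2\Bigr)^2+\mu\Bigl(\xi_2-\tfrac{\lambda}{\mu}\eta_1\Bigr)^2+\frac{\mu^2-\lambda^2}{\mu}\,(\eta_1^2+\eta_2^2),
\]
and the contribution of $\ker\ad s(p)$ is the pointwise norm-squared of the kernel components of $\xi$ and $\eta$. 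Since $\mu>0$ and, by the elementary inequality $\coth t>1$ for $t>0$, also $\mu=|\lambda|\coth|\lambda|>|\lambda|$, each of these pointwise forms is positive-definite in the fibre variables. Integrating over the compact surface $X$ yields $g(v,v)\ge0$, with equality only if $\xi\equiv0\equiv\eta$, that is $v=0$. Hence $g$ is positive-definite; and since $\omega(v,\cdot)=0$ forces $g(v,v)=\omega(v,Jv)=0$ and thus $v=0$, the form $\omega$ is non-degenerate.

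The routine parts are the bilinear expansion and the completion of the square; the one substantive point is the fibrewise reduction together with the inequality $\lambda\coth\lambda>|\lambda|$, which is precisely what renders the a priori dangerous cross term $-2(\ad s(\eta),\xi)_{L^2}$ harmless — note that the weaker bound $\Psi_{(s,e)}\ge\mathrm{id}$ on its own would not suffice. One should also record that every term in the formula for $g(v,v)$ is finite because $s$ is bounded on the compact $X$, so $\ad s$ and $\Psi_{(s,e)}$ are bounded operators on $L^2$; accordingly positivity is understood in the weak sense, which is all that is needed for $\omega$ to be a weak K\"ahler form and for the moment-map discussion above to apply.
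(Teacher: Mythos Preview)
Your argument is correct. The initial reduction by right $\G$-invariance and the expansion of $g(v,v)$ agree with the paper, and your pointwise analysis via the spectral decomposition of $\ad s(p)$ is sound: on each two-dimensional block the operator $\Psi_{(s,e)}$ acts by the scalar $\mu=\lambda\coth\lambda$, the completion of the square is accurate, and $\mu^2-\lambda^2=\lambda^2/\sinh^2\lambda>0$ gives strict positivity of the block form (the sign of the cross term is immaterial here, since flipping it amounts to replacing $\eta$ by $-\eta$).

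The route, however, differs from the paper's. After arriving at the same expression
\[
g(v,v)=\int_X\Bigl(\langle\xi,\Psi_{(s,e)}\xi\rangle+\langle\eta,\Psi_{(s,e)}\eta\rangle-2\langle s,[\xi,\eta]\rangle\Bigr),
\]
the paper does not diagonalise $\ad s$; instead it observes that at each $x\in X$ the polar decomposition restricts to the ordinary $\psi_x\colon\mathfrak{u}(n)\times U(n)\to GL_n(\C)$, and that the integrand is precisely the value at $(\xi^\#+J\eta^\#)(x)$ of the K\"ahler metric on $T^*U(n)\cong\mathfrak{u}(n)\times U(n)$ obtained from the tautological $1$-form and the complex structure pulled back via $\psi_x$. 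Positivity of the integrand is then quoted from the finite-dimensional result of Huebschmann--Leicht. Your approach is more elementary and self-contained, giving an explicit lower bound for the fibrewise Hermitian form; the paper's is shorter but relies on the external reference for the pointwise step.
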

\begin{proof}
  If $\xi,\eta\in\Omega^0(\g_E)$, then
  \begin{align*}
    g(\xi^\#+J\eta^\#,\xi^\#+J\eta^\#)
    &=\omega(\xi^\#+J\eta^\#,J\xi^\#-\eta^\#)\\
    &=\omega(\xi^\#,J\xi^\#)-\omega(\xi^\#,\eta^\#)+\omega(J\eta^\#,J\xi^\#)-\omega(J\eta^\#,\eta^\#)\\
    &=(\xi,\Psi(\xi))_{L^2}+(\eta,\Psi(\eta))_{L^2}-2(\bar{\kappa},[\xi,\eta])_{L^2}
  \end{align*}
  Since $g$ is right $\G$-invariant, it is enough to show the
  positive-definiteness at $(s,e)$. Hence, we need to show that
  \begin{align*}
    &(\xi,\Psi_{(s,e)}(\xi))_{L^2}+(\eta,\Psi_{(s,e)}(\eta))_{L^2}-2(s,[\xi,\eta])_{L^2}\\
    &=\int_X\langle\xi,\Psi_{(s,e)}(\xi)\rangle_{L^2}+\langle\eta,\Psi_{(s,e)}(\eta)\rangle_{L^2}-2\langle
    s,[\xi,\eta]\rangle_{L^2}>0
  \end{align*}
  for every $s\in\Omega^0(\g_E)$ and nonzero
  $\xi+i\eta\in\Omega^0(\g_E^\C)$. Note that the integrand is positive
  pointwise. In fact, after fixing a base point $x\in X$, the polar
  decomposition $\psi$ becomes the usual polar decomposition
  $\psi_x\colon\fu(n)\times U(n)\to GL_n(\C)$. The unique
  complex structure on $\fu(n)\times U(n)$ making $\psi_x$ a
  biholomorphism is compatible with the tautological 1-form on
  $\fu\times U(n)=T^*U(n)$ so that the tautological 1-form is
  a K\"ahler form (see \cite[Theorem 5.1 and Remark
  5.2]{Huebschmann2013}). The resulting K\"ahler metric on
  $\fu(n)\times U(n)$ evaluated at $(\xi^\#+J\eta^\#)(x)$
  is exactly the integrand and hence positive. Here,
  $(\xi^\#+J\eta^\#)(x)$ is the value of the section
  $\xi^\#+J\eta^\#\in\Omega^0(\g_E)\oplus\Omega^0(\g_E)$ at $x$. 
\end{proof}

\bibliography{..//..//references}
\bibliographystyle{abbrv}
\end{document}